\begin{document}

\spdefaulttheorem{myclaim}{Claim}{\bf}{\rm}
\newtheorem{fact}{Fact}

\newtheorem{assumption}{Condition}

\newenvironment{potthm}{\textit{Proof} of Theorem~\ref{thm:2}}{\qed}
\newenvironment{potclaim}{\textit{Proof} of Claim~\ref{claim:1}}{\qed}

\newcommand{\set}[1]{\left\{#1\right\}}
\newcommand{\ssep}{\mathrel{;}}

\def\rvec#1#2{(#1)_{#2}}
\def\cb#1{C_c(#1)}
\def\cbp{C_c(\Gint)}

\def\E#1{\mathrm{E}[#1]}
\def\V#1{\mathrm{V}[#1]}

\def\Cset{\mathbb{C}}
\def\Nset{\mathbb{N}}

\def\Rset{\mathbb{R}}

\def\Tset{\mathbb{T}}
\def\Zset{\mathbb{Z}}

\def\O{\mathcal{O}}
\def\S{\mathcal{S}}

\def\L{\mathcal{L}}
\newcommand{\cL}{\mathcal{L}}

\def\supp#1{\mathop{\mathrm{supp}}{#1}}

\def\cconj#1{{\overline{#1}}}

\def\fG{{\mathfrak{G}}}
\def\fT{\mathfrak{T}}

\newcommand{\rint}{\mathrm{int}}

\def\Gint{{{\mathfrak{G}}_\rint}}
\def\Hint{{{\mathfrak{H}}_\rint}}

\newcommand{\dual}{\widehat}
\newcommand{\pdual}{\widehat{\ }\,}

\def\dualG{{\dual{\fG}}}
\def\dualTset{{\dual{\fT}}}

\def\XP0{{X_{\Lambda}}}
\def\XPa{{X'_{\Lambda}}}

\def\inner#1#2{\langle#1,\,#2\rangle}

\def\cov{\mathrm{Cov}}

\def\fra#1#2{{#1}/{#2}}

\title{Random fields on model sets with localized dependency and their diffraction
\thanks{The preliminary version is presented at the fifth Asian
International Workshop on Quasicrystals, 2009, Tokyo Japan.}
}


\author{Yohji Akama         \and
        Shinji Iizuka
}


\institute{Yohji Akama\at
              Mathematical Institute \\
              Tel.: +81-(0)22-795-7708\\
              Fax: +81-(0)22-795-6400\\
              \email{akama@math.tohoku.ac.jp}           
           \and
           Shinji Iizuka\at
Research and Development Section, Hitachi East Japan
 Solutions, Ltd.
}

\date{Received: date / Accepted: date}

\maketitle

\begin{abstract}
For a random field on a general discrete set, we introduce a condition
that the range of the correlation from each site is within a
predefined compact set $D$. For such a
random field $\omega$ defined on the model set $\Lambda$ that satisfies a natural geometric condition, we develop a method to
calculate the diffraction measure of the random field. The method
partitions the random field into a finite number of random fields, each being
independent and admitting the law of large numbers. The diffraction measure 
 of $\omega$ consists almost surely of a pure-point component and an
 absolutely continuous component. The former is the diffraction
 measure of the expectation $\E{\omega}$,
while the inverse Fourier transform of the absolutely
 continuous component of $\omega$ turns out to be a weighted Dirac comb
 which satisfies a simple formula. Moreover, the pure-point component will be
 understood quantitatively in a simple exact  formula if the weights are continuous over the internal
 space of $\Lambda$. Then we provide a sufficient condition that the diffraction
 measure of a random field on a model set is still pure-point.

\keywords{Diffraction \and Pure point spectrum \and Absolutely continuous
 spectrum \and Quasicrystal\and  Model set}
 \PACS{61.05.cc \and 61.05.cp}
 \subclass{MSC primary 52C23; \and secondary 37B50}
\end{abstract}

\section{Introduction}

A physical quasicrystal is a material which has (1) a diffraction pattern with
Bragg peaks and (2) a symmetry that ordinary crystals cannot have. The set of the atomic positions in
a quasicrystal is mathematically modelled by a \emph{model set}~\cite{M:meyer}, which
is defined by introducing an extra space (``internal space''),  a relatively
compact subset (``\emph{window}'') of the internal space, and a so-called
\emph{star map} $(-)^\star$ such that for each site $s$ of a model set
the point $s^\star$ belongs to the window. The topological
properties of the window cause the
pure-point diffraction measure (see \cite{MR1798991} and
\cite{MR2084582} for example) of the model
set, which explain the aforementioned properties (1)
 of the quasicrystal.

Although model sets are proved to have necessarily a pure-point diffraction measure, real quasicrystals have
diffraction measures with not only Bragg peaks (pure-point component) but
also diffuse scattering (absolutely continuous component).  The
phenomenon is explained from a physical point of view with a probabilistic effect in
\cite{janot94:_quasic}, and in
\cite{baake98:_diffr_point_sets_with_entrop} where to the sites of the model set associated are
\emph{independent} random variables. In \cite{MR1325836}, Hof regarded
 the thermal motion  of atoms as i.\@i.\@d.\@ random displacements, and
 then studied the influence on the diffraction measure of aperiodic
 monoatomic crystals.
Since correlations (\cite{barabash09:_diffus_scatt_and_fundam_proper_of_mater}) are present
in a quasicrystal, we equip model sets with a localized probabilistic dependency,
 to quantitatively study the ability of diffuse
scattering to characterize local structures and defects in materials.
In \cite{MR2480747}, Lenz employed a dynamical system of point sets to
study the diffraction measures of percolation and the random
displacement  models based on aperiodic order. Recently, in \cite{MR:2011}, M\"uller and
Richard also made a rigorous approach on these models by using sets of $\sigma$-algebras.

 For a model set $\Lambda$, we consider a complex-valued random field $\{X_s\}_{s\in\Lambda}$ with 
dependency localized as follows:
there is a
finite patch $D$ such that each site $s$ has correlation on, at
most, sites belonging to the patch $D$ relative to $s$. 
This localized dependency condition seems
essentially the same as the so-called ``finite range condition'' of stochastic analysis.
We call a random field on a model set subject to the localized
dependency condition a \emph{finitely randomized model set}. We develop a method to
calculate the diffraction measure of such complex-valued random field
(Section~\ref{sec:almostisolated}). 
For the diffraction
measures of finitely randomized model sets,
we determine quantitatively
the pure-point component (in Section~\ref{sec:intensity}) and the
absolutely continuous component (in Section~\ref{sec:debye}). Our approach is mostly based on the
\emph{finite local complexity} of model sets, as in  \cite{MR1798991}. As a
consequence,
if the fourth noncentral moments $\{\E{\,|X_s|^4}\,\}_{s\in\Lambda}$ is bounded, and
if the expectation $\E{X_s}$ at each site $s$ as well as the covariance
$X_s$ and $X_{s-g}$ of sites $s$ and $s-g$
 in the finitely randomized model set are given by  continuous
 functions $e(s^\star)$ and $c_g(s^\star)$, where $s^\star$ is  the value of $s$ by the star map, 
then 
\begin{enumerate}
\item the inverse Fourier transform of the
 absolutely continuous component is a Dirac comb such that
the support $\{g_1,\ldots, g_n\}$ is the
 smallest $D$ and the weight of the $\delta_{g_i}$ is the
      average  of the covariances $\cov[X_s,\ X_{s-g_i}]$ ($s, s-g_i\in
      \Lambda$); and

\item the pure-point component 
 is the diffraction measure of a Dirac comb $\sum_{s\in\Lambda}
      \E{X_s}\delta_s$, the  expectation of the random field.
\end{enumerate} 
This type of theorems are also seen in some other models with i.i.d. conditions. See
\cite{baake10:_diffr_of_stoch_point_sets} and references therein.

On the other hand, 
from the viewpoint of stochastic processes, we provide a
sufficient condition for a
randomly weighted Dirac comb on a
model set to have
diffraction measure whose expectation is still pure-point.
 The sufficient condition is satisfied when the set of the weights $X_s$
 forms a Wiener stochastic
process  $\{X_{\star^{-1}(y)}\}_{y\in W}$ parametrized by the window
$W \subset \Rset$. We draw this observation by providing
quantitatively the diffraction measure of the deterministically weighted
Dirac comb on a model set. The quantitative estimate will be
established with the help of the so-called torus parametrization which
was introduced in \cite{0305-4470-30-9-016}, then was extended in
\cite{MR1798991} to the locally compact, $\sigma$-compact Abelian
Hausdorff groups (\textsc{lcag} for short), and was finally fully
exploited in \cite{MR2308136}.

\section{Basic properties of model sets: review}

Throughout this paper, $\fG$ and $\Gint$ are locally compact,
 $\sigma$-compact Abelian Hausdorff groups (\textsc{lcag} for
 short). 

\begin{definition}\label{def:ms}
A \emph{cut-and-project scheme} (c.-p.\@ scheme, for short) is a triple $\S=(\fG,\Gint,\tilde L)$
such that
(1) $\fG$ and $\Gint$ are called a \emph{physical space} and an \emph{internal
 space} respectively;
(2) $\tilde L$ is a lattice of $\fG\times\Gint$, that is,
 a discrete
 subgroup of $\fG\times \Gint$ with $(\fG\times\Gint)/\tilde L$ being
 compact;
(3) The canonical projection $\Pi:\fG\times\Gint\to\fG$ is injective on
 $\tilde L$, and the image of $\tilde L$ by the other canonical
 projection $\Pi_{\mathrm{int}}:\fG\times\Gint\to\Gint$  is dense in the internal space $\Gint$.
For each $s\in \Pi(\tilde L)$, we write $s^\star$ for $\Pi_{\mathrm{int}}\circ
 (\Pi|_{\tilde L})^{-1}(s)$, $L$ for $\Pi(\tilde L)$, and $L^\star$ for
 $(L)^\star$.  The $(-)^\star$ is called the \emph{star map}.
Define
\begin{align*}
\Lambda_\S(W):=\{ \Pi(x)\;;\; x\in\tilde L,\ \Pi_{\mathrm{int}}(x)\in W\} .
\end{align*}
\end{definition}
For any \textsc{lcag} $G$, any subsets $A,B\subset G$, and any $x\in G$, let $A\pm B$ be $\{a\pm b\;;\; a\in
A, b\in B\}$ and $x+A$ be $\{x+a\;;\;a\in A\}$. A set $\Lambda\subset\fG$ is said to be
\emph{uniformly discrete}, if $(\Lambda-\Lambda)\cap U=\{0\}$ for some
open neighborhood $U$ of 0, while $\Lambda$ is said to be
\emph{relatively dense}, if $\fG=\Lambda + K$ for some compact set $K$.
The interior, the closure and the boundary of $A$ in
\textsc{lcag} $G$ are denoted by  $Int_G(A)$, $Cl_G(A)$ and $\partial_G
A$, respectively. Here the subscript $G$ is omitted if no confusion occurs.

Lattices in $G\times H$ with $G,H$ being \textsc{lcag}'s are written as
 $\tilde L,\tilde M,\ldots$,
 and their canonical projections to $G$ are written as $L,M,\ldots$,
 respectively.

\begin{lemma}\label{lem:ud}
If $\Lambda\subset\fG$ is uniformly discrete, there is a compact neighborhood
 $U$ of 0 such that $U=-U$ and $(s+U)\cap (s'+U)=\emptyset$ for
 all distinct $s,s'\in \Lambda$.\end{lemma}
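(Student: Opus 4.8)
The plan is to reduce the pairwise disjointness of the translates $s+U$ to a single inclusion condition on the difference set $\Lambda-\Lambda$, and then to extract the required $U$ from the neighborhood supplied by uniform discreteness using only the topological-group structure of $\fG$.

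First I would record the elementary reduction. For $s,s'\in\Lambda$ one has $(s+U)\cap(s'+U)\neq\emptyset$ exactly when there are $u,u'\in U$ with $s+u=s'+u'$, i.e.\ when $s-s'\in U-U$. Hence, for \emph{distinct} $s,s'\in\Lambda$ the sets $s+U$ and $s'+U$ are disjoint for all such pairs precisely when
\[
  (\Lambda-\Lambda)\cap(U-U)\subseteq\{0\}
\]
(the point $0$ is harmless, since it corresponds only to the excluded case $s=s'$). By the definition of uniform discreteness there is an open neighborhood $V$ of $0$ with $(\Lambda-\Lambda)\cap V=\{0\}$, so it suffices to produce a compact neighborhood $U$ of $0$ with $U=-U$ and $U-U\subseteq V$.

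To construct such a $U$ I would use that $\fG$ is an \textsc{lcag}. Continuity of the map $(x,y)\mapsto x-y$ at $(0,0)$ makes $f^{-1}(V)$ an open set containing $(0,0)$, hence containing a basic product $B_1\times B_2$ of open neighborhoods of $0$; setting $A:=B_1\cap B_2\cap(-B_1)\cap(-B_2)$ gives a symmetric open neighborhood of $0$ with $A-A\subseteq V$. Local compactness then yields a compact neighborhood $U_0\subseteq A$ of $0$, and I symmetrize by taking $U:=U_0\cap(-U_0)$, which is compact (a closed subset of the compact $U_0$), symmetric, and still a neighborhood of $0$ because $\mathrm{Int}(U_0)\cap(-\mathrm{Int}(U_0))$ is an open set containing $0$. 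Since $U\subseteq A$ we get $U-U\subseteq A-A\subseteq V$, and $U$ has all the required properties.

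The step demanding the most care is the passage from the open neighborhood $A$ to a \emph{compact} neighborhood $U\subseteq A$: this is exactly where local compactness together with the Hausdorff property is essential, since it is the regularity of $\fG$ as a topological group that guarantees that every neighborhood of $0$ contains a compact one. The remaining manipulations—symmetrization and the inclusion $U-U\subseteq V$—are then immediate, so this existence of arbitrarily small compact neighborhoods is the real content of the lemma.
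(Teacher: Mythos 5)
Your proof is correct. The paper states Lemma~\ref{lem:ud} without giving any proof, treating it as a standard fact about uniformly discrete sets in a locally compact Hausdorff Abelian group; your argument---reducing pairwise disjointness of the translates to the inclusion $(\Lambda-\Lambda)\cap(U-U)\subseteq\{0\}$, and then producing a compact symmetric neighborhood $U$ with $U-U\subseteq V$ via continuity of subtraction, local compactness, and symmetrization---is precisely the standard argument that this omission presupposes, with every step (in particular the passage from an open to a compact neighborhood, which indeed needs local compactness plus Hausdorff) justified correctly.
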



\begin{definition}[Model set] Let $(\fG, \Gint, \tilde
 L)$ be a c.-p.\@ scheme. By a \emph{window}, we mean a
 nonempty, measurable, relatively compact subset of the
 internal space $\Gint$.  If $W\subset\Gint$ is a window, $\Lambda(W)$ is called
a \emph{model set}.\end{definition}

It is well-known that any model set is
uniformly discrete.  See \cite[Proposition~2]{MR1884143}, for example.
Every \textsc{lcag} has a unique Haar measure up to positive real multiples.
Throughout this paper, we fix Haar measures of the
\textsc{lcag}'s $\fG$ and $\Gint$. The Haar measure of $\Gint$ is denoted
by $\theta$,
 and the integration of a function with
respect to the Haar measure of $\fG$ ($\Gint$ resp.) is denoted by
$\int\cdots d x$ ($\int\cdots d y$ resp.) as usual. The Haar measure of a
set $A$ is just denoted by $|A|$ if no confusion occurs.
 By a \emph{van Hove sequence} of
$\fG$, we mean an increasing sequence $\{D_n\}_{n\in\Nset}$ of compact subsets of
$\fG$ such that $|D_n|>0$  for every $n\in\Nset=\{1,2,3,\ldots\}$ and for every  compact subset
$K\subset\fG$, $\lim_{n\to\infty} \left|\partial^K(D_n)\right|/|D_n|=0$, where for a compact set $A\subset\fG$,
\begin{align}
\partial^K(A):= \bigl((A+K)\setminus Int(A)\bigr) \cup \Bigl(\bigl(
 Cl(\fG\setminus A)-K\bigr)\cap A\Bigr). \label{thick boundary}
\end{align}
In \cite{MR1798991}, the existence of a van Hove sequence for any \textsc{lcag} is derived from 
\begin{proposition}[\protect{\cite[Theorem~9.8]{MR551496}}]\label{thm:str}
For every
locally compact, compactly generated Abelian Hausdorff 
group $H$, there are $l,m\in\Zset_{\ge0}$, a compact Abelian Hausdorff
 group $K$, and an isomorphism $\varphi$ from $H$ to 
$\Rset^l\times\Zset^m\times K$.
\end{proposition}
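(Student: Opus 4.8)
The plan is to obtain this as a consequence of the \emph{principal structure theorem} for \textsc{lcag}'s, using the compact generation hypothesis only at the end to force a certain discrete quotient to be finitely generated.

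First I would invoke, and sketch the proof of, the principal structure theorem: every \textsc{lcag} $H$ is topologically isomorphic to $\Rset^l\times H'$, where $H'$ contains a compact \emph{open} subgroup $K_0$. To see this, let $H_0$ be the connected component of the identity. The quotient $H/H_0$ is totally disconnected and locally compact, so by van Dantzig's theorem it has a compact open subgroup, whose preimage is an open subgroup of $H$ that is compact modulo $H_0$. For $H_0$ itself one uses the structure of connected \textsc{lcag}'s, namely $H_0\cong\Rset^l\times C$ with $C$ compact connected; this is the step that ultimately rests on Lie theory (the solution of Hilbert's fifth problem presents a connected group as a projective limit of Lie groups, combined with the elementary classification $\Rset^a\times\Tset^b$ of connected abelian Lie groups). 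Because $\Rset^l$ is connected, divisible, and has no small subgroups, it splits off as a direct topological factor of $H$, leaving a complement $H'$ in which the compact part is open.

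Next the hypothesis enters. Since $K_0$ is open in $H'$, the quotient $H'/K_0$ is discrete; and $H'$, being a quotient of the compactly generated group $H$, is itself compactly generated, hence so is the discrete group $H'/K_0$. A compactly generated discrete group is finitely generated, so by the classification of finitely generated abelian groups $H'/K_0\cong\Zset^m\times F$ with $F$ finite. I would then split the extension
\begin{align*}
0\longrightarrow K_0\longrightarrow H'\longrightarrow \Zset^m\times F\longrightarrow 0.
\end{align*}
Because $\Zset^m$ is free, lifting a basis produces a closed subgroup of $H'$ mapping isomorphically onto $\Zset^m$, while the preimage $K$ of $F$ is an extension of the finite group $F$ by the compact group $K_0$ and is therefore compact. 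This gives $H'\cong\Zset^m\times K$, and combining with the Euclidean factor yields $H\cong\Rset^l\times\Zset^m\times K$, as required.

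The genuinely hard step is the principal structure theorem, and within it the reduction of the connected component to $\Rset^l\times C$: the van Dantzig argument, the splitting of $\Rset^l$ as a vector subgroup, the finite-generation deduction, and the freeness splitting are all essentially formal once the connected case is in hand, but the connected case cannot be circumvented and is where the depth of the theorem resides. Since the statement is quoted verbatim from \cite{MR551496}, I would in practice simply cite it, but this is the route by which I would reconstruct the argument.
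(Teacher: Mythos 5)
The paper itself gives no proof here: the proposition is imported verbatim from Hewitt--Ross \cite{MR551496} (their Theorem~9.8), so your sketch can only be compared with the proof in that source --- and it is genuinely different. Your endgame is correct and complete: a quotient of a compactly generated group is compactly generated, a compactly generated discrete abelian group is finitely generated, and your splitting of $0\to K_0\to H'\to\Zset^m\times F\to 0$ works (the preimage $K$ of $F$ is a finite union of cosets of the compact open subgroup $K_0$, hence compact and open; the lifted copy of $\Zset^m$ meets $K$ trivially; and the resulting continuous bijection $\Zset^m\times K\to H'$ is open because $K$ is open). The difference lies in where the depth is placed, and you should note a near-circularity in your placement: connected locally compact groups are automatically compactly generated, so the structure $H_0\cong\Rset^l\times C$ of connected \textsc{lcag}'s on which your principal-structure-theorem step rests is itself a special case of the very statement being proved. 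Hewitt--Ross proceed in exactly the opposite order: Theorem~9.8 is proved first, by elementary arguments using neither Lie theory nor duality, and the principal structure theorem (their 24.30) is afterwards deduced from it, by applying 9.8 to an open compactly generated subgroup and then splitting off the vector factor. Your order --- principal structure theorem first, with compact generation used only to handle the discrete quotient --- is the one taken in modern treatments, and it is non-circular only because the principal structure theorem can be established independently via projective limits of Lie groups, as you acknowledge. What your route buys is conceptual economy once that heavy machinery is granted; what the Hewitt--Ross route (and hence the paper's bare citation) buys is a self-contained elementary proof.
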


Below we fix a van Hove sequence $\{D_n\}_n$ of $\fG$. For a lattice $\tilde L\subset\fG\times\Gint$,
 the measure of the fundamental domain is denoted by $|\tilde L|$, where
 the measure is the product measure of the Haar measures of $\fG$ and
 $\Gint$.

\def\dense{\mathrm{dense}}
For each discrete set $\Lambda$ of $\fG$, the density of $\Lambda$ with
respect to the van Hove sequence $\{D_n\}_n$ is denoted by 
$\dense_{\{D_n\}_n}(\Lambda):=\lim_{n\to\infty} \sum_{s\in \Lambda\cap
 D_n} |D_n|^{-1}$.

 The set of continuous functions from a set $A$ to a  set $B$ is denoted by  $C(A,B)$. 
 Let $1_V:\Gint\to\{0,1\}$ be the
indicator function of a set $V$, that is,
$1_V(y)= 1$ for $y\in V$, and $0$ otherwise.

We recall \cite[Proposition~6.2]{MR1798987} as follows
(see also \cite{MR1884143}):

\begin{proposition}[Weyl's theorem for model sets]\label{prop:weyl}
For any van Hove sequence $\{D_n\}_{n\in\Nset}$ of $\Gint$ and for
 any relatively compact set
 $W\subset\Gint$ with the Haar measure $\theta(\partial W)$ being $0$ and for any 
 $f\in \cbp$, we have
\begin{align*}
 \lim_{n\to\infty} \frac{1}{\theta(D_n)}{\sum_{s\in\Lambda(W)\cap D_n}
 f(s^\star)} = \frac{1}{|\tilde
 L|} \int_W f(y) d  y.\end{align*}
\end{proposition}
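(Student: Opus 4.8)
The plan is to obtain this equidistribution statement from the unique ergodicity of the translation action of $\fG$ on the compact torus $\Tset:=(\fG\times\Gint)/\tilde L$, reserving the hypothesis $\theta(\partial W)=0$ for the last step, where it lets one pass from continuous test functions to the discontinuous weight $f\cdot 1_W$. Here $\{D_n\}_n$ denotes the van Hove sequence in $\fG$ fixed above and $|D_n|$ its Haar measure. I would first prove the ``smooth'' version, in which the window indicator is dropped: for every $g\in\cbp$,
\[
  \lim_{n\to\infty}\frac{1}{|D_n|}\sum_{s\in L\cap D_n} g(s^\star)
  \;=\;\frac{1}{|\tilde L|}\int_{\Gint} g(y)\,d y .
\]

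The structural input is that the orbit $\{(t,0)+\tilde L:t\in\fG\}$ is dense in $\Tset$, which is immediate from the density of $\Pi_{\mathrm{int}}(\tilde L)$ in $\Gint$ demanded in Definition~\ref{def:ms}; for a translation action on a compact Abelian group a dense orbit gives minimality, and hence unique ergodicity with respect to the normalized Haar measure $\mu$ on $\Tset$. Since $\fG$ is amenable and $\{D_n\}_n$ is a van Hove (F\o lner-type) sequence, unique ergodicity yields $|D_n|^{-1}\int_{D_n}F((t,0)+\tilde L)\,d t\to\int_\Tset F\,d\mu$ for every $F\in C(\Tset,\Cset)$.

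To turn the lattice sum into such an orbit average I would fix an auxiliary $h\in\cb{\fG}$ with $\int_\fG h(x)\,d x=1$ and set
\[
  F\bigl((x,y)+\tilde L\bigr):=\sum_{(\ell,\ell^\star)\in\tilde L} h(x+\ell)\,g(y+\ell^\star),
\]
which is a well-defined continuous function on $\Tset$ because $\tilde L$ is discrete and $(x,y)\mapsto h(x)g(y)$ has compact support. Weil's unfolding formula gives $\int_\Tset F\,d\mu=|\tilde L|^{-1}\bigl(\int_\fG h\bigr)\bigl(\int_{\Gint}g\bigr)=|\tilde L|^{-1}\int_{\Gint}g$, while the substitution $\ell=-s$ (using $(s,s^\star)\in\tilde L$, whence $(s,0)+\tilde L=(0,-s^\star)+\tilde L$) turns the orbit average into $|D_n|^{-1}\sum_{s\in L} g(-s^\star)\int_{D_n}h(t-s)\,d t$. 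For every $s$ with $s+\supp h\subset D_n$ the inner integral equals $\int_\fG h=1$, and the remaining $s$ lie in a thickened boundary $\partial^K(D_n)$ with $K=\supp h\cup(-\supp h)$; by uniform discreteness of the model set the number of such $s$ is $O(|\partial^K(D_n)|)=o(|D_n|)$, so these terms drop out. Comparing the two evaluations of $\lim_n|D_n|^{-1}\int_{D_n}F$ and then replacing $g$ by $g(-\,\cdot)$, via inversion-invariance of Haar measure, yields the smooth version.

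Finally I would remove continuity of the window. The function $f\cdot 1_W$ is bounded with compact support and is discontinuous only on $\partial W$, which has $\theta$-measure $0$. After splitting $f$ into its real and imaginary, positive and negative parts, I would sandwich $1_W$ between $g^-_\varepsilon\le 1_W\le g^+_\varepsilon$ with $g^\pm_\varepsilon\in\cbp$ and $\int_{\Gint}(g^+_\varepsilon-g^-_\varepsilon)\,d y\to0$ as $\varepsilon\to0$, apply the smooth version to $f\,g^\pm_\varepsilon$, and let $\varepsilon\to0$; the sandwich pins the limit of $|D_n|^{-1}\sum_{s\in\Lambda(W)\cap D_n}f(s^\star)$ to $|\tilde L|^{-1}\int_W f\,d y$. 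The main obstacle is the third step: the passage from the continuous orbit average supplied by unique ergodicity to the genuine point-counting sum requires showing that smoothing in the $\fG$-direction costs only a boundary term, and it is exactly here that the van Hove property~\eqref{thick boundary} is essential; the rest is routine approximation.
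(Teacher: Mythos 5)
Your proposal cannot be compared against a proof in the paper, because the paper does not prove this proposition: it is imported verbatim from Baake--Moody \cite{MR1798987} (Proposition~6.2 there), with no argument given. Judged on its own, your proof is correct, and it is in substance the standard dynamical proof from the literature: unique ergodicity of the $\fG$-translation action on the compact group $\fT=(\fG\times\Gint)/\tilde L$, a periodization/unfolding step converting orbit averages of continuous functions on $\fT$ into weighted sums over the lattice, van Hove boundary control, and a final sandwich argument that spends the hypothesis $\theta(\partial W)=0$; this is essentially Schlottmann's route \cite{MR1798991} (see also Moody \cite{MR1884143}), and it is consonant with the machinery the paper itself builds in Section~\ref{sec:intensity} (torus parametrization, Proposition~\ref{prop:ds}); you also correctly read through the statement's typographical slips (the $D_n$ live in $\fG$ and the normalization is the $\fG$-Haar measure of $D_n$). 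Two steps need more care than you give them, though neither is a genuine gap. First, $L=\Pi(\tilde L)$ is in general \emph{dense} in $\fG$, not uniformly discrete, so ``uniform discreteness of the model set'' does not literally apply to the index set of your sum $\sum_{s\in L} g(-s^\star)\int_{D_n}h(t-s)\,dx$; the estimate is saved by noting that only $s$ with $-s^\star\in\supp g$ contribute, and these form the model set $\Lambda(-\supp g)$ with relatively compact window, which \emph{is} uniformly discrete, giving the count $O\left(\left|\partial^{K+U}(D_n)\right|\right)=o(|D_n|)$ exactly as in the paper's Lemma~\ref{aki2}. Second, ``minimality hence unique ergodicity'' is false for general group actions; it is valid here only because the action is by translations on a compact Abelian group, where a dense orbit forces every invariant probability measure to have vanishing nontrivial Fourier coefficients and hence to be Haar --- that character argument (or a citation for compact group rotations) should be stated explicitly.
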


\subsection{Diffraction}  Mathematical diffraction theory, introduced
 by Hof~\cite{MR1328260}, is reviewed below
according to \cite{MR2084582}. We say a countable set $S\subset\fG$ has
\emph{finite local complexity}~(\textsc{flc}), if $S-S$ is closed and discrete. Let a bounded complex sequence
$\bigl\{w_s\bigr\}_{s\in S}$ be indexed over an \textsc{flc} set $S\subset\fG$ such that the corresponding Dirac comb $
 \omega:=\sum_{s\in S} w_s \delta_s$
defines a regular Borel measure on $\fG$. Here $\delta_s$ is a Dirac
measure of $\fG$ such that $\delta_s(A)=1$ if $s\in A$ and 0 otherwise
for any $A\subset \fG$.  We often identify $\omega$
with $\{w_s\}_{s\in S}$.  For a van Hove
sequence  $\{D_n\}_n$ of $\fG$,
set $\omega_n:=\omega|_{D_n}:=\sum_{s\in S\cap D_n}w_s\delta_s$.
For any complex measure $\mu$ on $\fG$, let $\tilde \mu$ be $\tilde
{\mu}(A)=\cconj{\mu(-A)}$. 
Set
$\gamma_\omega^{(n)}:={\omega_n\ast \widetilde{\omega_n}}/{|D_n|} 
$, where $*$ is the convolution. Actually we have
$ \gamma_\omega^{(n)}=\sum_{g\in S- S}
\eta^{(n)}_\omega(g) \delta_g$ where
$\eta^{(n)}_\omega(g)$ is the summation of $w_s
\cconj{w_t}|D_n|^{-1}$ over $s,t\in S\cap
  D_n$ such that $  s-t=g$. Since $S$ is discrete and $D_n$ is compact,
  $\eta^{(n)}_\omega$ is well-defined, and $\gamma^{(n)}_\omega$ is so.

Let $C_c(\fG)$ be the set of complex continuous functions on $\fG$ with
compact support in $\fG$.
Let the \emph{autocorrelation measure} of $\omega$ be the limit $\gamma_\omega$ of $\gamma_\omega^{(n)}$ in the
vague topology. Then $\gamma_\omega$ is written as 
\begin{align*}
\gamma_\omega=\sum_{g\in S - S}
 \eta_\omega(g)\delta_g,\qquad \eta_\omega(g)=\lim_{n\to\infty}\eta_\omega^{(n)}(g).
\end{align*} 
For any \textsc{lcag} $H$, the \emph{dual
group} of $H$ is denoted by $\dual{H}$. Fix a Haar measure $h$ of $H$. 
The Fourier
transform of a function $f:H\to \Cset$ is $\hat{f}:\dual{H}\to
\Cset$ such that $\hat{f}(\chi):=\int_H
f(x)\cconj{\chi(x)}d h(x)$. The \emph{diffraction measure} of $\omega$
 is, by definition, the Fourier
transform 
$\widehat{\gamma_\omega}$ of the autocorrelation measure $\gamma_\omega$. 
A measure on $\fG$ has Fourier transform as a measure on $\widehat \fG$ as
follows:

\begin{proposition}[\protect{\cite[Theorem~2.1, Theorem~4.1]{MR0621876}}]\label{prop:al}
Suppose $\lambda$ is a measure on $\fG$. 
The Fourier transform (if it exists) of $\lambda$ is a \emph{unique} measure $\widehat{\lambda}$ on $\dual\fG$ such that
for all $\varphi \in C_c(\fG)$ 
\begin{align*}\int_\fG (\varphi  * \tilde{\varphi })(x) d\lambda(x) =
\int_{\dual \fG} \left| \widehat {\varphi }( - \chi)\right|^2
d\widehat{\lambda}(\chi).
\end{align*}  
Moreover if $\lambda$ is positive definite (i.e.,
 $\int_\fG (\varphi * \tilde{\varphi})(x) d\lambda(x)\ge0$ for all
 $\varphi\in C_c(\fG)$), then
 $\lambda$ indeed has the Fourier transform $\widehat\lambda$ which is positive.
Here
 $\tilde \varphi:\fG\to\Cset$  is a function 
$x\in\fG\mapsto \cconj{\varphi(-x)}$.
\end{proposition}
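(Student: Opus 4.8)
The plan is to reduce both assertions to the Stone--Weierstrass theorem applied to the image of $C_c(\fG)$ under the Fourier transform. Let $\mathcal{A}$ denote the complex-linear span of $\set{\,|\widehat{\varphi}|^2\ ;\ \varphi\in C_c(\fG)\,}$, which is contained in $C_0(\dual\fG)$ by the Riemann--Lebesgue lemma. First I would record that $\mathcal{A}$ is a self-adjoint subalgebra of $C_0(\dual\fG)$. It is closed under multiplication because $|\widehat{\eta}|^2\,|\widehat{\zeta}|^2=|\widehat{\eta}\,\widehat{\zeta}|^2=|\widehat{\eta*\zeta}|^2$, using the convolution theorem and $\eta*\zeta\in C_c(\fG)$; it is closed under conjugation since its generators are real; it vanishes nowhere, since for any $\chi_0$ some bump $\varphi$ has $\widehat{\varphi}(\chi_0)\ne0$; and it separates points, since if $\chi_1\ne\chi_2$ then the functionals $\varphi\mapsto\widehat{\varphi}(\chi_i)=\int\varphi\,\cconj{\chi_i}$ are linearly independent, so some product $\widehat{\varphi}\,\cconj{\widehat{\psi}}=\widehat{\varphi*\tilde\psi}$ takes different values at $\chi_1,\chi_2$. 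Hence $\mathcal{A}$ is uniformly dense in $C_0(\dual\fG)$. The link to the statement is the polarization identity, which writes each product $\widehat{\varphi}\,\cconj{\widehat{\psi}}=\widehat{\varphi*\tilde\psi}$ as a complex-linear combination of functions of the form $|\widehat{\eta}|^2$, so that $\mathcal{A}$ is exactly the span of the integrands occurring on the right-hand side.

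For uniqueness, suppose $\mu_1,\mu_2$ are two measures on $\dual\fG$ satisfying the displayed Parseval identity for every $\varphi\in C_c(\fG)$. Then $\int|\widehat{\varphi}(-\chi)|^2\,d\mu_1=\int|\widehat{\varphi}(-\chi)|^2\,d\mu_2$ for all $\varphi$; since the reflection $\chi\mapsto-\chi$ is an automorphism of $\dual\fG$ preserving $\mathcal{A}$, linearity gives $\int f\,d\mu_1=\int f\,d\mu_2$ for every $f\in\mathcal{A}$. As $\mathcal{A}$ is uniformly dense in $C_0(\dual\fG)$, the two functionals agree on a uniformly dense subspace; to conclude $\mu_1=\mu_2$ as Radon measures I would fix a relatively compact open set, approximate a given $g\in C_c(\dual\fG)$ uniformly by elements of $\mathcal{A}$ while controlling their mass on a fixed compact neighbourhood, and pass to the limit using the local finiteness of the $\mu_i$. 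Thus $\widehat\lambda$, when it exists, is unique.

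For existence in the positive-definite case I would follow the generalized Bochner scheme. Define a linear functional $L$ on $\mathcal{A}$ by $L(|\widehat{\varphi}|^2):=\int_\fG(\varphi*\tilde\varphi)(x)\,d\lambda(x)$ and extend over the span. Positive-definiteness of $\lambda$ says exactly that $L(|\widehat{\varphi}|^2)\ge0$, so $L$ is a positive functional on the dense self-adjoint subalgebra $\mathcal{A}$. The goal is to extend $L$ to a positive linear functional on all of $C_c(\dual\fG)$ and then invoke the Riesz--Markov representation theorem to produce a positive Radon measure $\widehat\lambda$ with $\int f\,d\widehat\lambda=L(f)$, which by construction satisfies the Parseval identity. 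If preferred, one may first reduce to a compactly generated group by the structure theorem (Proposition~\ref{thm:str}), where Fourier inversion is concrete.

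The hard part is precisely this last extension step. A priori $L$ is defined only on $\mathcal{A}$, whereas Riesz--Markov requires a positive functional on all of $C_c(\dual\fG)$, and a nonnegative $g\in C_c(\dual\fG)$ is only approximable in sup-norm by elements of $\mathcal{A}$ that need not themselves be nonnegative, so positivity is not transmitted automatically through the approximation. The resolution is order-theoretic: one dominates $g$ from above by finite sums $\sum_i|\widehat{\eta_i}|^2$, shows that the infimum of $L\!\left(\sum_i|\widehat{\eta_i}|^2\right)$ over such dominating sums yields a well-posed, additive, positive extension, and controls everything on a fixed compact neighbourhood so that the possible unboundedness of $\lambda$, and of the emerging $\widehat\lambda$, does not obstruct local finiteness. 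This positivity-preserving extension, together with the accompanying translation-boundedness estimates, is the technical heart of the Argabright--Gil de Lamadrid transformable-measure theory, and is where essentially all the work lies.
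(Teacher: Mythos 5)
The first thing to say is that the paper does not prove this proposition at all: it is quoted verbatim from Argabright--Gil de Lamadrid (Theorems 2.1 and 4.1 of the cited memoir), so there is no internal proof to compare against, and your sketch does follow the broad outline of that theory. Judged as a proof, however, it has genuine gaps at exactly the load-bearing points, and you concede as much. For existence, you claim that positive-definiteness of $\lambda$, i.e.\ $L(|\widehat{\varphi}|^2)\ge 0$, makes $L$ ``a positive functional on $\mathcal{A}$.'' That is a non sequitur: a pointwise-nonnegative element of $\mathcal{A}$ is a linear combination $\sum_i c_i|\widehat{\varphi_i}|^2$ in which some $c_i$ may be negative, and nothing you have said shows $L$ of such an element is nonnegative; its pointwise square root lies only in $C_0(\dual\fG)$, not in the image of $C_c(\fG)$ under the Fourier transform, so the algebraic trick ``positive $=$ square'' is unavailable. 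This monotonicity is precisely what your order-theoretic extension to $C_c(\dual\fG)$ would need to get started, and you explicitly defer that step to the reference (``where essentially all the work lies''). A proof whose technical heart is deferred is a plan, not a proof. A smaller but real omission: well-definedness of $L$ on $\mathcal{A}$ already requires injectivity of the Fourier transform on $L^1(\fG)$ (if $\sum_i c_i|\widehat{\varphi_i}|^2=0$ then $\sum_i c_i\,\varphi_i*\tilde{\varphi_i}=0$), which you never invoke.

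The uniqueness half also has a gap, though a fixable one. Uniform density of $\mathcal{A}$ in $C_0(\dual\fG)$ does not by itself force $\mu_1=\mu_2$, because the measures in question are typically unbounded (e.g.\ $\sum_{\chi\in\cL}\delta_\chi$-type combs as in Theorem~\ref{thm:2}): sup-norm approximation of $g\in C_c(\dual\fG)$ by elements of $\mathcal{A}$, which have noncompact support, gives no control of $\int a_n\,d\mu_i$ outside a compact set, and ``local finiteness'' cannot repair that. The standard repair is a domination/factorization argument: given $f\in C_c(\dual\fG)$, choose $u=\sum_i|\widehat{\eta_i}(-\cdot)|^2\in\mathcal{A}$ with $u\ge 1$ on a neighbourhood of $\supp{f}$; then $u$ is automatically $\mu_i$-integrable, since $\int u\,d\mu_i=\sum_i\int(\eta_i*\tilde{\eta_i})\,d\lambda<\infty$, one can write $f=uh$ with $h\in C_c(\dual\fG)$, approximate $h$ uniformly by $a_n\in\mathcal{A}$, and conclude from $|ua_n-f|\le\|a_n-h\|_\infty\, u$ that $\int f\,d\mu_1=\int f\,d\mu_2$. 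With that substituted for your vague ``controlling their mass on a fixed compact neighbourhood,'' the uniqueness argument is complete; the existence statement, by contrast, remains in substance a citation of the memoir rather than a proof.
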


A Haar measure on $\dual \fG$ is $\widehat\delta_0$ where $\delta_0$
is the Dirac measure at $0$ on $\Gint$. Then the equation of
Proposition~\ref{prop:al} amounts to a Plancherel formula. The integral of
a function with respect to the Haar measure of $\dual \fG$ is denoted by
$\int \cdots d\chi$. Because $\widetilde{\eta_\omega}=\eta_\omega$, we
have $\widetilde{\gamma_\omega}=\gamma_\omega$ and thus
$\widehat{\gamma_\omega}$ is positive by \cite[Proposition~4.1]{MR0621876}.

A measure $\mu$ on $\fG$ is said to be \emph{translationally bounded}, if for
every $\varphi\in C_c(\fG)$ the set $\left\{ \int_\fG \varphi(x + t)d\mu(x)\;; \;
t\in \fG\right\}$ is bounded.

Baake-Moody established the pure-point diffraction of weighted Dirac
combs on model sets, by using Weyl's theorem (\cite[Proposition~6.2]{MR1798987})
for model sets  and an ingenious topological space.
\begin{proposition}[\protect{\cite[Theorem
 2]{MR2084582}}]\label{prop:modelsetpp}Suppose that $\Lambda(W)$ is a
 model set, and
 the window $W\subset\Gint$  
satisfies that the Haar measure $\theta(\partial W)$ is 0. Then for
 any
 $f:\Gint\to\Cset$ supported and continuous on $Cl(W)$,
 the diffraction measure $\dual{\gamma_\omega}$ of the Dirac
 comb $\omega=\sum_{s\in\Lambda(W)} f(s^\star)\delta_s$
is
translationally bounded, nonnegative and pure-point. \end{proposition}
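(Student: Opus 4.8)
The plan is to compute the autocorrelation $\gamma_\omega$ explicitly by Weyl's theorem (Proposition~\ref{prop:weyl}) and then to recognize its Fourier transform as a countable sum of point masses, so that pure-pointedness becomes structural. First I would put $F:=f\,1_{Cl(W)}$, a bounded, compactly supported (hence $L^{1}\cap L^{2}$) function on $\Gint$, and compute the autocorrelation coefficients. For $g\in\Lambda(W)-\Lambda(W)\subset L$ the star map is a homomorphism, so $s,\,s-g\in\Lambda(W)$ forces $s^\star\in W$ and $s^\star-g^\star\in W$; hence $\eta_\omega(g)$ is the Weyl average over $\Lambda(W)$ of $y\mapsto F(y)\cconj{F(y-g^\star)}$. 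The hypothesis $\theta(\partial W)=0$ is exactly what guarantees that this integrand has $\theta$-negligible discontinuity set, so Proposition~\ref{prop:weyl} applies and gives
\begin{align*}
\eta_\omega(g)=\frac{1}{|\tilde L|}\int_{\Gint}F(y)\cconj{F(y-g^\star)}\,dy=\frac{1}{|\tilde L|}\,(F\ast\tilde F)(g^\star),
\end{align*}
where $\tilde F(y)=\cconj{F(-y)}$. Thus $\gamma_\omega=|\tilde L|^{-1}\sum_{g\in L}(F\ast\tilde F)(g^\star)\,\delta_g$, the summand vanishing whenever $g^\star$ leaves the compact support of $F\ast\tilde F$.

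Next I would identify the candidate diffraction. For the unique $x\in\tilde L$ with $s=\Pi(x)$ one has $s^\star=\Pi_{\mathrm{int}}(x)$, and the dual lattice $\tilde L^{0}\subset\dual\fG\times\dual\Gint$ (the annihilator of $\tilde L$) is again discrete. Because $\Pi_{\mathrm{int}}(\tilde L)$ is dense, the projection $p:\dual\fG\times\dual\Gint\to\dual\fG$ is injective on $\tilde L^{0}$, so with $q$ the complementary projection there is a well-defined dual star map $\chi\mapsto\chi^{\sharp}:=q\bigl((p|_{\tilde L^{0}})^{-1}(\chi)\bigr)$ on the countable group $M:=p(\tilde L^{0})$, characterized by $\cconj{\chi(s)}=\chi^{\sharp}(s^\star)$ for all $s\in L$. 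Evaluating the finite-volume intensity $|D_n|^{-1}\bigl|\sum_{s\in\Lambda(W)\cap D_n}F(s^\star)\cconj{\chi(s)}\bigr|^{2}$ at a character $\chi\in M$ and using $\cconj{\chi(s)}=\chi^{\sharp}(s^\star)$, the inner sum becomes $\sum_{s}F(s^\star)\chi^{\sharp}(s^\star)$, to which Proposition~\ref{prop:weyl} again applies; this yields a point mass at $\chi$ of intensity $|\tilde L|^{-2}\bigl|\int_{\Gint}F(y)\chi^{\sharp}(y)\,dy\bigr|^{2}$. The expected diffraction is therefore the pure-point measure
\begin{align*}
\widehat{\gamma_\omega}=\frac{1}{|\tilde L|^{2}}\sum_{\chi\in M}\Bigl|\int_{\Gint}F(y)\,\chi^{\sharp}(y)\,dy\Bigr|^{2}\,\delta_\chi,
\end{align*}
which is nonnegative with countable support; translational boundedness then follows from the uniform discreteness of $\Lambda(W)$ and the boundedness of the weights, the diffraction of a translation-bounded measure being translation-bounded.

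The genuine crux is proving that this candidate is the \emph{entire} diffraction, i.e.\ that no continuous component is present. The natural shortcut---testing $\widehat{\gamma_\omega}$ against $|\widehat\varphi(-\chi)|^{2}$ via Proposition~\ref{prop:al}, lifting $\sum_{g\in L}$ to $\sum_{x\in\tilde L}$ of the product $(\varphi\ast\tilde\varphi)\otimes(F\ast\tilde F)\in C_c(\fG\times\Gint)$, and applying Poisson summation for $\tilde L$---is not rigorous as stated, because Poisson summation fails for a bare $C_c$ function: the spectral sum $\sum_{\hat x\in\tilde L^{0}}|\widehat\varphi(p\hat x)|^{2}|\widehat F(q\hat x)|^{2}$ need not converge absolutely, and a continuous contribution is precisely what such a formal interchange would suppress. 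I would instead secure pure-pointedness through the torus parametrization. The weight $g\mapsto(F\ast\tilde F)(g^\star)$ is the restriction to the dense subgroup $L^\star\subset\Gint$ of a fixed continuous function, which makes $\gamma_\omega$ a strongly almost periodic measure on $\fG$; and a translation-bounded measure whose autocorrelation is strongly almost periodic has pure-point diffraction. Equivalently, the dynamical hull of $\omega$ factors onto the compact rotation on $\Tset:=(\fG\times\Gint)/\tilde L$, and the condition $\theta(\partial W)=0$ renders this factor map almost everywhere one-to-one, so the dynamical (hence diffraction) spectrum is purely discrete. Establishing this almost-periodicity is the hard step; granted it, the uniqueness clause of Proposition~\ref{prop:al} forces $\widehat{\gamma_\omega}$ to coincide with the pure-point measure displayed above.
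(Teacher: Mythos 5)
You should note first what the paper itself does with this statement: it does \emph{not} prove Proposition~\ref{prop:modelsetpp}, but imports it verbatim from Baake--Moody \cite{MR2084582}, describing that proof as ``using Weyl's theorem \dots and an ingenious topological space.'' Measured against that, your proposal has a genuine gap, and you flag it yourself. The Weyl computation of $\eta_\omega(g)$ and the identification of the candidate pure-point measure on the projected dual lattice are the routine parts (and are essentially right). The entire content of the proposition is the step you defer: excluding a continuous component. Your route (a) -- ``the weights are restrictions of a fixed continuous compactly supported function on $\Gint$, hence $\gamma_\omega$ is strongly almost periodic, hence $\widehat{\gamma_\omega}$ is pure point'' -- is not a lemma one can invoke; establishing that implication for weighted combs on model sets \emph{is} the theorem of Baake--Moody being cited, and their ``ingenious topological space'' is exactly the device that makes the almost-periodicity argument work. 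Writing ``establishing this almost-periodicity is the hard step; granted it, \dots'' therefore makes the proposal circular as a blind proof: what precedes the crux is easy, and the crux is outsourced to the result being proved.

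Your fallback route (b), via the torus parametrization and an a.e.\ one-to-one factor map onto $\fT$, is a real alternative -- it is Schlottmann's method and is essentially how the paper proves its own Theorem~\ref{thm:2}. But note the mismatch in hypotheses: Theorem~\ref{thm:2} assumes repetitivity, $W=Cl(Int(W))$, absence of nontrivial translation invariance of $W$, and that $\Lambda(W)-\Lambda(W)$ generates $L$, and the paper explicitly says Proposition~\ref{prop:modelsetpp} can be dispensed with only ``if we add a mild condition $Cl(Int(W))=W$.'' The proposition assumes none of these, so route (b) as sketched does not reach the stated generality without substantial extra work (unique ergodicity of the hull and a.e.\ injectivity of $\beta$ for arbitrary windows with $\theta(\partial W)=0$ must themselves be proved). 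Two smaller points: Proposition~\ref{prop:weyl} is stated for $f\in\cbp$, whereas your $F=f\,1_{Cl(W)}$ is in general discontinuous across $\partial W$; the measure-zero hypothesis there concerns the window, not the integrand, so you need a Tietze extension of $f|_{Cl(W)}$ to $C_c(\Gint)$ (harmless, since every sum runs over points with $s^\star\in W$, but your appeal to ``negligible discontinuity set'' overstates what that proposition allows). Likewise the finite-volume intensity formula you use to read off the point masses (the Bombieri--Taylor identification) is itself a nontrivial fact requiring unique ergodicity; it is acceptable for producing a ``candidate,'' which is how you use it, but it cannot carry any weight in the final argument.
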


\section{Finitely randomized model sets}\label{sec:almostisolated}

We need Weyl's theorem for random fields on model sets, in
Section~\ref{sec:debye}. So for a model set $\Lambda_\O(W)$ over a
c.-p.\@ scheme $\O$, we wish to decompose a random field $\{X_s\}_{s\in
\Lambda_\O(W)}$ into a finite number of \emph{independent} random
fields, each being on a model set over the same c.-p.\@ scheme $\O$.
For a sufficient condition of such a decomposition, we consider a
geometric condition~(Subsection~\ref{subsec:finitely periodic}) on model
sets and a condition for random fields~(Subsection~\ref{subsec:depset}).
Hereafter, a ``random variable'' is abbreviated as ``\textsc{rv},'' and
the cardinality of a finite set $A$ is denoted by $\#A$.

\subsection{Independence in random field}\label{subsec:depset}

\begin{definition}[Dependency set] \label{def:depset} Let
 $\{X_s\}_{s\in S}$  be  a random
 field on a discrete set $S$. 
A \emph{dependency set}(\emph{d-set} for short) is a set $D=-D\subset S-S$ such that 
for any finite sets $P, Q \subset S$, 
if a set $(P - Q)$ is disjoint from $D$, then a pair of a $\#P$-dimensional random vector
 $\rvec{X_s}{s \in  P}$ and a $\#Q$-dimensional random vector $\rvec{X_s}{s
      \in  Q}$ is independent. A d-set
 necessarily has $0$ as an element. If a random field has a dependency
 set, we can replace it with an arbitrary superset of it.
\end{definition}

A d-set is a patch such that
 each site $s$ has correlation on, at most, sites
 belonging to the patch relative to $s$.  
 Recall that a sequence $\{X_1,X_2,\ldots\}$ of \textsc{rv}'s is
 independent, if so are any finite subsequences. We set the product
 $\Pi_{x\in \emptyset}(\cdots)$ to be 1.

\begin{lemma}[Independence]\label{lem:irms} Let $D$ be a d-set
 of a random field $\{X_s\}_{s\in S}$ on an \textsc{flc} subset $S$ of an
 \textsc{lcag}, and let $N$ be any subset of the \textsc{lcag}. If $D\cap \left( (s+N) -
 (t+N)\right)=\emptyset$ for any distinct $s, t \in S $, then a sequence
 $\left\{\; \prod_{t\in (s+N)\cap S}X_t\;;\;s\in S\right\}$ of
 \textsc{rv}'s is
 independent. Furthermore, the random field is independent, if and only
 if the random field has $\{0\}$ as a d-set.
\end{lemma}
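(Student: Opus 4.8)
The plan is to prove the two parts of Lemma~\ref{lem:irms} separately, with the first (the independence of the products under the disjointness hypothesis) doing most of the work and the ``if and only if'' statement following as a quick corollary.

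\medskip

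For the main assertion, I would proceed directly from the definition of a dependency set (Definition~\ref{def:depset}). Fix finitely many \emph{distinct} sites $s_1,\dots,s_k\in S$, and set $Y_i:=\prod_{t\in(s_i+N)\cap S}X_t$; since independence of a sequence reduces to independence of each finite subfamily, it suffices to show $Y_1,\dots,Y_k$ are jointly independent. Each $Y_i$ is a (measurable) function of the finite random vector $\rvec{X_t}{t\in(s_i+N)\cap S}$, indexed by the finite set $P_i:=(s_i+N)\cap S$ (finite because $S$ is \textsc{flc}, hence uniformly discrete, so only finitely many sites of $S$ can lie in any translate of a fixed set $N$ provided $N$ is bounded---here I would note we may take $N$ bounded, or more carefully argue finiteness from the \textsc{flc} assumption). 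The hypothesis $D\cap\bigl((s+N)-(t+N)\bigr)=\emptyset$ for distinct $s,t$ says precisely that $(P_i-P_j)\cap D=\emptyset$ for $i\ne j$, and more generally that the union $P_i$ is ``$D$-separated'' from the union $\bigcup_{j\ne i}P_j$. I would therefore apply the d-set property with $P=P_i$ and $Q=\bigcup_{j\ne i}P_j$ to peel off independence one index at a time.

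\medskip

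The main obstacle is that Definition~\ref{def:depset} only gives independence of \emph{one} vector from \emph{one} other vector (a two-block statement), whereas joint independence of $Y_1,\dots,Y_k$ requires a $k$-block statement; the two are not equivalent in general without care. The clean way around this is an induction on $k$. The inductive step establishes that the pair consisting of the vector $\rvec{X_t}{t\in P_k}$ and the concatenated vector $\rvec{X_t}{t\in P_1\cup\cdots\cup P_{k-1}}$ is independent, which holds because $\bigl(P_k-(P_1\cup\cdots\cup P_{k-1})\bigr)\cap D=\emptyset$ follows from the pairwise disjointness hypothesis together with $D=-D$. This splits the joint law as a product of the law of $Y_k$ and the joint law of $(Y_1,\dots,Y_{k-1})$; combined with the inductive hypothesis that the latter are jointly independent, one gets the full product factorization. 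Since each $Y_i$ is a deterministic function of its block, independence of the blocks forces independence of the $Y_i$.

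\medskip

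For the final equivalence, I would argue both directions. If $\{0\}$ is a d-set, then for any distinct $s,t\in S$ we have $(\{s\}-\{t\})\cap\{0\}=\emptyset$, so $X_s$ and $X_t$ are independent; extending to arbitrary finite disjoint $P,Q$ gives independence of the whole field. Conversely, if the field is independent, then for any finite $P,Q$ with $(P-Q)\cap\{0\}=\emptyset$---which exactly means $P\cap Q=\emptyset$---the vectors $\rvec{X_s}{s\in P}$ and $\rvec{X_s}{s\in Q}$ are built from disjoint, hence independent, families of \textsc{rv}'s and are therefore independent; thus $\{0\}$ satisfies the defining condition of a d-set. This direction uses only that $0\in D$ always and that disjointness of index sets is equivalent to $(P-Q)\cap\{0\}=\emptyset$.
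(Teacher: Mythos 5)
Your proposal is correct and takes essentially the same route as the paper's proof: an induction on the number of sites in which the two-block d-set property is applied to the last block $(s_\nu+N)\cap S$ versus the union of the preceding blocks (whose difference set is covered by the pairwise differences $(s_i+N)-(s_\nu+N)$, each disjoint from $D$ by hypothesis), followed by specializing to $N=\{0\}$ for the ``if'' direction of the final equivalence. Your additional remark about ensuring the blocks $(s_i+N)\cap S$ are finite is a point the paper passes over silently, but it does not alter the argument.
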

\begin{proof}
We show that a sequence $\left\{\;\prod_{t\in (s+N)\cap S} X_t\;;\;s \in S'\;\right\}$ of \textsc{rv}'s is
 independent for any finite subset $S'=\{s_1,\ldots,s_\nu\}$ of
 $S$. The proof is by induction on $\nu=\#S'$.
When $\nu=1$, it is trivial, so assume $\nu>1$. 
   A set
$(\{s_1,\ldots,s_{\nu-1}\}+N) - (s_\nu+N) =
 \bigcup_{i=1}^{\nu-1} \bigl(s_i+N - (s_\nu+N)\bigr)$ is
 disjoint from $D$ by the premise. So, a random vector $\bigl(
 X_t\;;\; t\in (\{s_1,\ldots, s_{\nu-1}\}+N)\cap S\bigr)$ is
 independent from a random vector $\bigl(X_t\;;\; t\in (s_\nu + N)\cap S
 \bigr)$. Thus a $(\nu-1)$-dimensional random vector $\bigl(\; \prod_{t\in
( s_i + N)\cap S}X_t\;;\; 1\le i\le \nu-1\; \bigr)$ is
independent from an \textsc{rv} $\prod_{t\in (s_\nu+N)\cap S} X_t$. 
 Because a sequence $\bigl\{\; \prod_{t\in
( s_i + N)\cap S}X_t\;;\; 1\le i\le \nu-1\; \bigr\}$ of \textsc{rv}'s
 has length
$\nu-1$ and is independent by the induction hypothesis on
 $\nu$, we are done.
The if-part of the last sentence of the statement is proved by taking $N=\{0\}$, while the only-if part is immediate.
\qed\end{proof}

\begin{definition}
A \emph{finitely randomized model set}
 (\textsc{frms} for short) on a model set $\Lambda(W)$ is a
 random field $\{X_s\}_{s\in\Lambda(W)}$ with a
\emph{finite} d-set $D$.

\end{definition}
The \textsc{frms} can be regarded as a
 Dirac comb with random weights.
When each $X_s$ is an indicator\, (i.e., a $\{0,1\}$-valued \textsc{rv}), 
 we intend that  $X_s>0$  if and only if
 $s\in\Lambda(W)$ indeed appears.

\begin{example}\label{eg:disjunion}
 Suppose $\Gamma = \Lambda(W)$ is a model set such that the star map
 $(-)^\star$ is injective. Suppose a set $C$ is $\{p_1, \ldots, p_n\}\subset L$ such that 
$(W + p_i^\star) \cap  (W + p_j^\star) \cap  L^\star = \emptyset\quad
 (i \ne  j)$. Then 
 $\Lambda=\Lambda(W + C^\star)$ is also a model set, and for every
 $s\in\Lambda$ there are unique $t\in \Gamma$ and unique
 $i\in\{1,\ldots,n\}$ such that 
\begin{align}s=t+p_i. \label{stpi}
\end{align}
Let
$\{\vec{Y}_t\}_{t \in \Gamma}$
be an infinite independent sequence of $n$-dimensional random vectors
 taking values in $ \{0, 1\}^n$. By using \eqref{stpi}, define a random
 field $\{X_s\}_{s \in  \Lambda}$ so that
$X_s$ is the $i$-th component of $\vec{Y}_t$.
Then we can prove that  $\{X_s\}_{s \in \Lambda}$ is indeed a random field by Kolmogorov's consistency theorem~\cite[p.129]{MR1267569}. Below  we explain that
 $D := C - C$ is a d-set of this random field.
Let
 $P, Q$ be finite subsets of $\Lambda$ such that the pair of
a $\#P$-dimensional random vector
$\rvec{X_s}{s \in  P}$ and a $\#Q$-dimensional random vector $\rvec{X_s}{s \in  Q}$ is not independent. Take 
minimal subsets $P', Q'$ of $\Gamma$ such that
$P \subset  P' +  C$ and $Q \subset  Q' +  C$.
Because of the choice of the finite sets $P,Q$, the random vector
 $(X_s)_{s\in P}$ is not independent from the random vector
 $(X_s)_{s\in Q}$. So
the pair of random vectors $\rvec{\vec{Y}_t}{t \in  P'}$
 and $\rvec{\vec{Y}_t}{t \in  Q'}$
is not independent. But the set $\{\vec{Y}_t\}_{t \in \Gamma}$ is independent, so
there is $t \in  P' \cap  Q'$. Then, by the
minimality of  $P', Q'$, there are $p, q \in  C$ such that 
$t + p \in  P$ and  $t + q \in  Q$. Therefore $p - q \in  P - Q$. Thus
$(P - Q) \cap  D \ne  \emptyset$.

%
%
\end{example}

\begin{example}[\textsc{frms} caused by random shift of
 windows]\label{eg:rmsshift}  For a model set
$\Lambda(W)$ with  both of
 $\fG$ and $\Gint$ being Euclidean vector spaces, physicists often
associate to each site $s\in L$ its own window $W_s=W+ y_s$ where
the ``shift'' $y_s$ is an \textsc{rv} 
ranging over a window $R\subset\Gint$ with nonempty interior.
 Then $W+R$ is again a window.
For $s\in\Lambda(W+R)$, define an \textsc{rv}
$X_s$ such that $X_s=1$ whenever the \textsc{rv} $y_s$ takes a value in
 $s^\star-W$, in other words, $s^\star\in W_s=W+y_s$, while $X_s=0$ otherwise.
If the sequence $\{y_s\}_{s\in \Lambda(W)}$ of the \textsc{rv}'s is independent, then the random field
 $\{X_s\}_{s\in\Lambda(W+R)}$ is independent, so  it is an \textsc{frms}
 on a model set $\Lambda(W+R)$. 

     However, if  $y_s(\omega)=y_t(\omega)$ for any $s,t\in L$ and for any
 $\omega$ of the probability space, then the random field is not an \textsc{frms}, because no finite
 d-set  can be taken owing to the existence of a relatively
 dense subset $\Gamma:=\Lambda((W+R)\setminus W)\subset \Lambda(W+R)$
 such that a sequence
 $\{X_s\}_{s\in\Gamma}$ of the \textsc{rv}'s is not independent. Here the
 relative density follows from $Int((W+R)\setminus W)\ne\emptyset$, and
 the proof is in the appendix.

\end{example}

\subsection{Finitely periodic model sets and internal space}\label{subsec:finitely periodic}

Roughly speaking, 
we say a subset $A$ of $H$ is a finitely periodic, if
and only if for each $x\in
(A-A)\setminus\{0\}$ there is a positive integer $\ell$ such that
every arithmetical progression with the
common difference being $x$ has length smaller than $\ell$.
The precise definition is as follows:

\begin{definition}
For any \textsc{lcag} $H$, any set $A\subset H$, any $x\in H$
 and any $s\in A$,  $\ell_A(x\;;s)$ is 0 if $x=0$, or else is the maximum
 $k\in \Zset_{\ge0}\cup\{\infty\}$ such that $\{s -  n x\;;\; 0\le n\le
 k\}\subset A$. So if $x\not\in A-A$, then $\ell_A(x\;;s)$ is $0$ for
 any $s\in A$.
Set
\begin{align*}
\ell_A(x):=\max_{s\in A} \ell_A(x\;;s).
\end{align*} If $\ell_A(x)$ is finite
      for any $x\in A - A$,  we say $A$ is
 \emph{finitely periodic}. We say 
an \textsc{frms} $\{X_s\}_{s\in\Lambda}$ is \emph{finitely periodic}, if
 so is $\Lambda$.
\end{definition}

%
\begin{lemma}
\begin{enumerate}
\item If a model set is finitely periodic,  the star map is
      injective.
\item
A model set is finitely periodic, if the star map is
 injective and the internal space is isomorphic to
 $\Rset^l\times\Zset^m\times F$ for some $l,m\in\Zset_{\ge0}$ and some
 finite Abelian group $F$.
\end{enumerate} 
\end{lemma}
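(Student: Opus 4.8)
The plan is to prove both implications by contraposition, exploiting the single structural fact that the star map $(-)^\star\colon L\to\Gint$ is a group homomorphism with $s\in\Lambda(W)\iff s^\star\in W$ for $s\in L$. Consequently an arithmetic progression $\{s-ng\;;\;0\le n\le k\}\subset\Lambda(W)$ is carried to the progression $\{s^\star-ng^\star\;;\;0\le n\le k\}\subset W$ of the \emph{same} length but common difference $g^\star$; conversely, the quantity $\ell_{\Lambda(W)}(g)$ is controlled by how long a progression with difference $g^\star$ can remain inside the relatively compact window $W$. Both assertions reduce to analysing this transfer.

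For the first assertion I argue the contrapositive: assume $(-)^\star$ is not injective, so there is $g\ne0$ in its kernel, i.e. $g\in L$ with $g^\star=0$ (concretely $g=s-s'$ for distinct $s,s'\in\Lambda(W)$ with $s^\star=s'^\star$). Pick any $t\in\Lambda(W)$. Then $t-g\in L$ and $(t-g)^\star=t^\star-g^\star=t^\star\in W$, so $t-g\in\Lambda(W)$; iterating gives $\{t-ng\;;\;n\ge0\}\subset\Lambda(W)$ and hence $\ell_{\Lambda(W)}(g;t)=\infty$. As $g=t-(t-g)\in\Lambda(W)-\Lambda(W)$, this yields $\ell_{\Lambda(W)}(g)=\infty$, so $\Lambda(W)$ is not finitely periodic. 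This step is insensitive to the order of $g$: a vanishing $g^\star$ already forces $\Lambda(W)$ to be invariant under subtraction of $g$.

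For the second assertion, again contrapositively, suppose $\Lambda(W)$ is not finitely periodic: there is $g\in\Lambda(W)-\Lambda(W)$, $g\ne0$, with $\ell_{\Lambda(W)}(g)=\infty$, so for every $k$ there is a length-$k$ progression with difference $g^\star$ inside $W\subset Cl(W)$, a compact set. Injectivity gives $g^\star\ne0$. Now I invoke the identification $\Gint\cong\Rset^l\times\Zset^m\times F$ and write $g^\star=(a,b,c)$. If $(a,b)\ne0$, then $\{ng^\star\;;\;n\ge0\}$ is unbounded in the torsion-free factor $\Rset^l\times\Zset^m$, so the progressions $\{s^\star-ng^\star\}$ leave every bounded set and cannot all fit in $Cl(W)$ once $k$ is large — a contradiction. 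This is exactly where the hypothesis bites: were the compact part of $\Gint$ a general compact group rather than the \emph{finite} group $F$ (as Proposition~\ref{thm:str} would otherwise permit), it could contain an infinite-order element whose orbit stays bounded, and this argument would fail.

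The main obstacle is the residual case $(a,b)=0$, i.e. $g^\star=c\in F$ of finite order $r\ge2$. Here the progression $\{s^\star-ng^\star\}$ merely cycles through $r$ points of $W$ and never becomes unbounded, so relative compactness yields nothing; $\ell_{\Lambda(W)}(g)=\infty$ now forces the \emph{whole} cyclic orbit $\{s^\star-ng^\star\;;\;0\le n<r\}$ into $W$, which via injectivity means $g$ is itself a torsion element of order $r$ in $\fG$ whose full $\langle g\rangle$-orbit lies in $\Lambda(W)$. Closing the proof requires excluding this; the clean way is to use that in the intended setting the physical space $\fG$ is torsion-free, so $rg=0$ with $g\ne0$ is impossible and no such $g^\star\in F\setminus\{0\}$ arises. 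I would therefore flag this torsion case as the crux: without torsion-freeness of $\fG$ the statement in fact breaks (a window invariant under a nonzero torsion shift produces a genuinely $g$-periodic model set), so the real content is that, once the pure-torsion obstruction is ruled out, the finiteness of $F$ is precisely what upgrades ``bounded progression'' to ``bounded length.''
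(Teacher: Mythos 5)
Your part (1) is the paper's own argument, and your treatment of part (2) follows the paper's route as well: transfer an unboundedly long arithmetic progression through the star map and kill it with a diameter bound in the torsion-free factor $\Rset^l\times\Zset^m$ of $\Gint$. The only organizational difference is that the paper does not case-split on the free component of $g^\star$: it passes from $x$ to $Nx$ with $N=\#F$, so that the $F$-component $\psi_2(Nx)=N\psi_2(x)$ vanishes automatically, and then applies the diameter estimate to $\psi_1(Nx)$.

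The torsion case you flag as the crux is a genuine issue, and it is a gap in the \emph{paper's} proof rather than an artifact of your approach. The paper's step ``the injectivity of the star map implies $\psi_1(Nx)\ne0$'' is exactly your residual case in disguise: injectivity only yields the implication $(Nx)^\star=0\Rightarrow Nx=0$, so the step silently assumes $Nx\ne0$, i.e.\ that $x$ is not a torsion element of $L$. Your claim that the statement itself fails without such an assumption is correct. Concretely, take $\fG=\Gint=\Rset\times\Zset/2\Zset$, $\tilde L=\{((m+n\sqrt{2},a),(m-n\sqrt{2},a))\;;\;m,n\in\Zset,\ a\in\Zset/2\Zset\}$, and $W=[0,1]\times\Zset/2\Zset$. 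This is a c.-p.\ scheme in the paper's sense, the star map $(m+n\sqrt{2},a)\mapsto(m-n\sqrt{2},a)$ is injective, and $\Gint\simeq\Rset^1\times\Zset^0\times F$ with $F=\Zset/2\Zset$; yet $x=(0,1)$ has order $2$, lies in $\Lambda(W)-\Lambda(W)$, and satisfies $W+x^\star=W$, hence $\Lambda(W)+x=\Lambda(W)$ and $\ell_{\Lambda(W)}(x)=\infty$, so $\Lambda(W)$ is not finitely periodic. Thus part (2) needs the supplementary hypothesis that $L$ has no nonzero torsion element (which holds whenever $\fG$ is torsion-free, e.g.\ $\fG=\Rset^d$); with that hypothesis your argument closes completely, and the paper's does too. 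The gap is harmless for the rest of the paper, since the decomposition theorem assumes finite periodicity outright via Condition~\ref{cond:1}.
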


\begin{proof} (1) If the kernel of the star map has nonzero element $x$,
 then for any site $s$ in the model set, we have an infinite arithmetical
 progression $(s+n x)_{n \in\Nset}$.  (2)
Assume the model set $\Lambda(W)$  over a c.-p.\@ scheme $(\fG, \Gint,
 \tilde L)$ is not finitely periodic.  Let $N=\# F$. Then there is $x\in\left(\Lambda(W)-\Lambda(W)\right)\setminus\{0\}$ satisfies that for all
 $\ell\in \Nset$ there is $s_\ell\in\Lambda(W)$ such that 
\begin{align*}
\{s_\ell - n x\;;\;
 0\le n< N \ell \}\subseteq \Lambda(W).
\end{align*}
 Let $\varphi$ be the isomorphism from $\Gint$ to the
 $\Rset^l\times \Zset^m\times F$ and let $\psi_1$ be a homomorphism
 $L\stackrel{\star}{\to}
 L^\star\hookrightarrow\Gint\stackrel{\varphi}{\simeq}\Rset^l\times\Zset^m\times
 F\stackrel{\pi}{\longrightarrow} \Rset^l\times\Zset^m$ where $\pi$ is
 the canonical projection, and $\psi_2$
 be the other homomorphism from $L$ to $F$.  Since
 $F$ is a finite group, $N \psi_2(x)=0$. The injectivity of the star
 map implies $\psi_1(N x)\ne0$.

Then
\begin{align*}
\{\psi_1(s_\ell) - n\psi_1(x)\;;\;
 0\le n< N \ell \}\subseteq \pi\left(\varphi(W)\right).
\end{align*}
We can take the integer $\ell$ greater than $d/d'$ where $d$ is the diameter
 of the compact set $\pi\left(\varphi(Cl(W))\right)\subseteq
 \Rset^l\times \Zset^m$ and $d'$ is the
 norm of $\psi_1\left( N x\right)$.  But this is a contradiction.
\qed\end{proof}

\begin{example} \begin{enumerate}\item
The vertex sets of
 the rhombic Penrose tilings are finitely periodic model sets with
 $\Gint=\Cset\times (\Zset/5\Zset)$ and  
injective star maps. See \cite[Section~3.2]{MR1460032}.

\item If an internal space is the compact Abelian group of $p$-adic integers,
 we can find a model set~\cite{MR1633181} which is not finitely
 periodic.
\end{enumerate}
\end{example}

\subsection{Decomposition of finitely randomized model sets}\label{subsec:decompose}

\begin{definition}\label{lem:sub}
A \emph{cut-and-project subscheme} of a c.-p.\@ scheme $(\fG, \Gint,
\tilde L)$ is a c.-p.\@ scheme $(\fG, \Hint, \tilde M )$ such that  $\tilde M\subset\tilde L$,
$\Hint\subset\Gint$, and the topology of $\Hint$ is the relative topology
      induced from $\Gint$. 
\end{definition}

\begin{lemma}\label{prop:measure} If an \textsc{lcag} $\Hint$ is a
 subgroup of an \textsc{lcag} $\Gint$, then the Haar measure $\theta$ of $\Gint$ restricted
 to $\Hint$ is a Haar measure  $\vartheta$ of $\Hint$ such that
\begin{align*}
\theta\left(\partial_\Gint(B)\right)=0\Rightarrow
\vartheta(\partial_\Hint(B\cap \Hint))=0. 
\end{align*} 
\end{lemma}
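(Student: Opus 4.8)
The plan is to split the lemma into its two assertions and prove them almost independently, since the boundary statement is purely topological and uses nothing about $\theta$ beyond monotonicity, whereas the identification of the restricted measure as a Haar measure is a soft functional-analytic point.

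First I would verify that $\vartheta:=\theta|_\Hint$ is a Haar measure of $\Hint$. Translation invariance is immediate: for $h\in\Hint$ and a Borel set $A\subseteq\Hint$ one has $\vartheta(h+A)=\theta(h+A)=\theta(A)=\vartheta(A)$, because $h\in\Hint\subseteq\Gint$ and $\theta$ is $\Gint$-invariant. What remains is that $\vartheta$ be nonzero, locally finite and Radon. Since $\Hint$ is an \textsc{lcag} in the relative topology it is closed in $\Gint$, and for the restriction to be nonzero it must in addition be \emph{open} (a measurable subgroup of positive measure contains a neighbourhood of $0$ by a Steinhaus-type argument, hence is open); this is the situation supplied by the cut-and-project subschemes of Definition~\ref{lem:sub}, so $\Hint$ is clopen. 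The restriction of the Radon measure $\theta$ to the clopen set $\Hint$ is then again Radon, positive on nonempty relatively open sets and finite on compacta, so by the uniqueness of Haar measure up to a positive scalar $\vartheta$ is a Haar measure of $\Hint$.

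Next I would establish the set inclusion $\partial_\Hint(B\cap\Hint)\subseteq\partial_\Gint(B)$, from which the stated implication follows at once by monotonicity of $\theta$. Writing $A:=B\cap\Hint$, the standard subspace-topology identity gives $Cl_\Hint(A)=Cl_\Gint(A)\cap\Hint\subseteq Cl_\Gint(B)\cap\Hint$. On the other hand $Int_\Gint(B)\cap\Hint$ is relatively open in $\Hint$ and contained in $A$, whence $Int_\Gint(B)\cap\Hint\subseteq Int_\Hint(A)$. Subtracting the second containment from the first yields
\begin{align*}
\partial_\Hint(A)=Cl_\Hint(A)\setminus Int_\Hint(A)
 &\subseteq\bigl(Cl_\Gint(B)\cap\Hint\bigr)\setminus\bigl(Int_\Gint(B)\cap\Hint\bigr)\\
 &=\bigl(Cl_\Gint(B)\setminus Int_\Gint(B)\bigr)\cap\Hint=\partial_\Gint(B)\cap\Hint.
\end{align*}
Consequently, whenever $\theta(\partial_\Gint(B))=0$ we get $\vartheta(\partial_\Hint(B\cap\Hint))=\theta(\partial_\Hint(B\cap\Hint))\le\theta(\partial_\Gint(B))=0$, as required.

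The main obstacle is not the boundary computation, which is a routine manipulation of closures and interiors in the subspace topology, but the first step: ensuring that the naive restriction of $\theta$ is a genuine Haar measure rather than the zero measure or a non-Radon object. This is exactly where the openness (equivalently clopenness) of $\Hint$ in $\Gint$ enters — a proper closed subgroup such as $\Zset\subset\Rset$ has restricted measure zero — so I would keep that hypothesis in view and lean on the uniqueness of Haar measure to pin down $\vartheta$, while the boundary inclusion above is valid for any subgroup equipped with the relative topology.
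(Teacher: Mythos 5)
Your treatment of the implication itself coincides with the paper's proof: the paper establishes exactly the inclusion $\partial_\Hint(B\cap\Hint)\subseteq\partial_\Gint(B)\cap\Hint$ by the same comparison of closures and interiors in the relative topology, and then concludes by monotonicity of $\theta$, just as you do (your version spells out the two containments slightly more explicitly). Where you genuinely diverge is the first assertion. The paper disposes of it with the single sentence ``It is immediate that $\vartheta$ is a Haar measure of $\Hint$,'' whereas you observe, correctly, that this is neither immediate nor true for an arbitrary subgroup: a locally compact subgroup is closed, and if it is not open its $\theta$-measure is zero by the Steinhaus argument you cite, so the restriction is the zero measure and not a Haar measure ($\Zset\subset\Rset$ being the basic example). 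So on this point your blind attempt is more careful than the paper's own proof, and the openness hypothesis you introduce is a genuine correction of the statement rather than a defect of your argument.

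The one soft spot in your write-up is that you \emph{assert}, rather than prove, that openness ``is the situation supplied by the cut-and-project subschemes of Definition~\ref{lem:sub}.'' That is true in the paper's application, but it requires an argument and it uses the finite generation of $L$ assumed in Lemma~\ref{lemma:sub}: there $\Hint=Cl(M^\star)$ with $M=kL$, so by continuity of multiplication by $k$ and density of $L^\star$ one gets $\Hint\supseteq k\,Cl(L^\star)=k\Gint$, and it remains to see that $k\Gint$ is open. Since $\Gint$ contains the dense finitely generated subgroup $L^\star$, it is compactly generated (an open subgroup containing a generating set of $L^\star$ is closed and dense, hence everything), so Proposition~\ref{thm:str} gives $\Gint\simeq\Rset^l\times\Zset^m\times K$ with $K$ compact and topologically finitely generated; the dual group of such a $K$ embeds into some $\Tset^n$, so its $k$-torsion is finite, hence $K/kK$ is finite and $kK$ is open, whence $k\Gint$ and therefore $\Hint$ are open. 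Without finite generation of $L$ this can fail, and with it the lemma itself; so either keep the openness hypothesis explicit, as you do, or supplement your proof with the verification above when invoking the lemma in the setting of Condition~\ref{cond:1}.
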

\begin{proof}It is immediate that
 $\vartheta$ is a Haar measure of $\Hint$.
Note $\partial_\Hint(B\cap \Hint)$ is $\left(Cl(B\cap \Hint)\cap
\Hint\right)\setminus \left(Int(B\cap\Hint )\cap \Hint\right)$ contained
by $(Cl(B)\cap\Hint)\setminus Int(B) =\partial_\Gint (B)\cap \Hint$. So, if
$\theta\left(\partial_\Gint(B)\right)=0$ then $\theta\left(\partial_\Hint(B\cap
\Hint)\right)=0$, which implies $\vartheta(\partial_\Hint(B\cap
\Hint))=0$ since $\vartheta$ is the restriction of $\Gint$'s Haar
measure $\theta$ to $\Hint$.
\qed\end{proof}

\begin{lemma}\label{lemma:sub}
For any c.-p.\@ scheme $(\fG, \Gint, \tilde L)$ with $L$ being
 finitely generated and for any finite subset $D\subset L$ there are a
 c.-p.\@ subscheme $\S=(\fG, \Hint, \tilde M)$ of $(\fG,\Gint,\tilde L)$
 and a finite complete representation system $R$ of $ L/ M$ with
 $D\subset R$.
\end{lemma}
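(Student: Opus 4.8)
The plan is to build the subscheme by replacing $\tilde L$ with a suitable finite-index sublattice and then taking, as the new internal space, the closure of the image of that sublattice under $\Pi_{\mathrm{int}}$. The first thing to notice is that everything reduces to a group-theoretic statement about $L=\Pi(\tilde L)$: a finite complete representation system $R\supset D$ of $L/M$ exists exactly when $M$ is a subgroup of $L$ with $[L:M]<\infty$ and $M\cap(D-D)=\{0\}$. Indeed, the latter condition says precisely that distinct elements of $D$ lie in distinct cosets of $M$, so one may select one representative per coset of $L/M$ using the given elements on their cosets, and finiteness of $R$ follows from $[L:M]<\infty$.

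First I would construct such an $M$. Since $L$ is finitely generated abelian it is residually finite: writing $L\cong\Zset^k\times F$ with $F$ finite, for each nonzero $e\in D-D$ I can produce a finite-index subgroup $M_e\le L$ with $e\notin M_e$ (if $e$ has a nonzero $\Zset^k$-component, reduce that coordinate modulo a large enough integer; if $e$ is nonzero torsion, use the projection $L\to F$, whose kernel $\Zset^k\times\{0\}$ has finite index). Taking $M:=\bigcap_{e\in(D-D)\setminus\{0\}}M_e$, a finite intersection, yields a finite-index subgroup of $L$ with $M\cap(D-D)=\{0\}$, and $D$ then extends to the desired finite system $R$.

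Next I would set $\tilde M:=(\Pi|_{\tilde L})^{-1}(M)\subset\tilde L$ and $\Hint:=Cl_{\Gint}\bigl(\Pi_{\mathrm{int}}(\tilde M)\bigr)$. Because $\Pi|_{\tilde L}$ is a bijection onto $L$, the sublattice $\tilde M$ has index $[L:M]<\infty$ in $\tilde L$, and each $x\in\tilde M$ satisfies $\Pi_{\mathrm{int}}(x)\in M^\star\subset\Hint$, so $\tilde M\subset\fG\times\Hint$. The hard part will be cocompactness of $\tilde M$ in $\fG\times\Hint$, and the crucial intermediate claim is that $\Hint$ is an \emph{open} subgroup of \emph{finite index} in $\Gint$. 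To see this I would write $L^\star=\Pi_{\mathrm{int}}(\tilde L)$ as a finite union of cosets $g_i+M^\star$; since closure commutes with finite unions and $L^\star$ is dense in $\Gint$, this gives $\Gint=\bigcup_i(g_i+\Hint)$, whence $[\Gint:\Hint]<\infty$, and a closed finite-index subgroup is open. Consequently $\fG\times\Hint$ is an open subgroup of finite index in $\fG\times\Gint$, and $\Hint$, being a closed subgroup of the $\sigma$-compact \textsc{lcag} $\Gint$, is itself a $\sigma$-compact \textsc{lcag} carrying the relative topology.

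Finally I would assemble the c.-p.\@ scheme axioms. As a finite-index subgroup of the lattice $\tilde L$, the group $\tilde M$ is itself a lattice in $\fG\times\Gint$. Pushing forward to the open subgroup $\fG\times\Hint$: under the quotient map $q\colon\fG\times\Gint\to(\fG\times\Gint)/\tilde M$ onto the compact quotient, $q(\fG\times\Hint)$ is an open subgroup, hence closed, hence compact, and it is exactly $(\fG\times\Hint)/\tilde M$; this yields cocompactness. Discreteness of $\tilde M$ in $\fG\times\Hint$ is inherited from $\tilde L$ through the relative topology, injectivity of $\Pi|_{\tilde M}$ is inherited from $\Pi|_{\tilde L}$, and density of $\Pi_{\mathrm{int}}(\tilde M)=M^\star$ in $\Hint$ holds by the very definition of $\Hint$. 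Thus $(\fG,\Hint,\tilde M)$ is a c.-p.\@ subscheme of $(\fG,\Gint,\tilde L)$ with the required representation system $R$, completing the argument.
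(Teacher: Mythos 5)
Your proof is correct and follows essentially the same route as the paper's: choose a finite-index subgroup $M\le L$ so that distinct elements of $D$ lie in distinct cosets (the paper takes $M=kL$ for a suitably large $k$ via the structure theorem, you take an intersection of finite-index subgroups avoiding $(D-D)\setminus\{0\}$), then set $\tilde M$ to be the graph of the star map over $M$ and $\Hint=Cl(M^\star)$. Beyond that, you verify in detail the cut-and-project axioms for $(\fG,\Hint,\tilde M)$ --- notably cocompactness, via the observation that $\Hint$ is an open finite-index subgroup of $\Gint$ --- which the paper asserts without proof, so your write-up is a strictly more complete version of the same argument.
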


\begin{proof} By the structure theorem of a
 finitely generated Abelian group, $\tilde{L}$ is isomorphic to $\Zset^v\times
\Zset/\Zset_{n_1}\times\cdots\times\Zset/\Zset_{n_u}$ for some  $u,v\in\Zset_{\ge0}$
 and for some integers $n_1,\ldots,n_u\ge 2$.  Because $D$ is regarded as a finite
 subset of $\Zset^v\times \Zset/\Zset_{n_1}\times\cdots \Zset/\Zset_{n_u}$, there is an integer $k$ greater than the ``first components'' of
 any elements of $D$. Without loss of
 generality, each $n_i$ divides $k$. Then put $M:=k L$.  We can find a finite complete
 representation system $R\supseteq D$. 
 Let $\tilde{M}:=\{(t, t^\star)\;;\; t \in M\}$ and
$\Hint:=Cl(M^\star)\subset\Gint$. Then $\Hint$ is a $\sigma$-compact
 \textsc{lcag} with the relative topology induced from $\Gint$, and $(\fG,
\Hint, \tilde M)$ is a c.-p.\@ scheme with the star map being
a restriction of the star map of $(\fG, \Gint, \tilde L)$. 
\qed\end{proof} 

We consider the following condition on \textsc{frms}'s:
\begin{assumption}\label{cond:1} $\Lambda_\O(W)$ is a finitely periodic model set over
 a c.-p.\@ scheme $\O=(\fG, \Gint, \tilde L)$ with $L$ being
 finitely generated.
\end{assumption}

Recall that $\theta$ is a Haar measure of $\Gint$. The subscheme
$\S$ given in Lemma~\ref{lemma:sub} is used in the following theorem:

\begin{theorem}[Decomposition]\label{thm:partition} Suppose an
 \textsc{frms} $\{X_s\}_{s\in \Lambda_\O(W)}$ satisfies
 Condition~\ref{cond:1}. Then 
 there are a c.-p.\@ subscheme
 $\S=(\fG, \Hint, \tilde M)$ of the c.-p.\@ scheme $\O$ and a finite
 complete representation system $R=\{r_C\in L\;;\;C\in L/ M\}$ of $ L/
 M$ such that following holds:

 for each $g\in\Lambda_\O(W)-\Lambda_\O(W)$, each 
$C\in L/M$,  each $ r\in
 R$ with $r\equiv g \bmod M$,  and  each nonnegative integer $k\le
 \ell:=\ell_{\Lambda_{\O}(W)}(g-r)$, there is a relatively compact set
 $V_{C,k}\subseteq W$ such that for
\begin{align}
S_{C,k}:=r_C + \Lambda_\S(V_{C,k}), \label{def:sck}
\end{align} 
we have
\begin{enumerate}
\item \label{assert:independence}  the sequence $\{X_s \cconj{X_{s-g}}\;;\; s\in
S_{C,k},\ s-g \in \Lambda_\O(W)\;\}$ is independent; and

\item \label{assert:density}  if the Haar measure
 $\theta(\partial W)$ is 0, then  $\dense_{\{D_n\}_n} (S_{C,k})$ exists.
\end{enumerate} 
Furthermore
$\Lambda_\O(W)$ is the disjoint union of all the $S_{C,k}$'s.

\end{theorem}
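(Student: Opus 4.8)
The plan is to realize the decomposition in three moves: fix the subscheme and the representatives once and for all, then, for a given $g$, stratify each residue class of $\Lambda:=\Lambda_\O(W)$ modulo $M$ by a depth function read off from the direction $g-r$, and finally check the two assertions on each stratum. To begin, I would apply Lemma~\ref{lemma:sub} to the finite d-set $D$, which by Condition~\ref{cond:1} lies in the finitely generated group $L$; this furnishes the subscheme $\S=(\fG,\Hint,\tilde M)$ and a finite complete representation system $R=\{r_C\;;\;C\in L/M\}$ with $D\subseteq R$. Taking $r_{[0]}=0$, I record two facts used throughout: since $R$ meets every coset exactly once and $D\subseteq R$, the set $D$ injects into $L/M$, whence $M\cap D=D\cap[0]=\{0\}$. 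The residue classes already give the coarse partition $\Lambda=\bigsqcup_{C\in L/M}\bigl(r_C+\Lambda_\S(W_C)\bigr)$ with $W_C:=(W-r_C^\star)\cap\Hint$, because $s\equiv r_C\bmod M$ exactly when $s=r_C+m$ with $m\in M$ and $m^\star\in W-r_C^\star$.

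Now fix $g\in\Lambda-\Lambda$, let $r=r_{[g]}$ be its representative, and set $h:=g-r\in M$ and $\ell:=\ell_\Lambda(h)$, which is finite by the finite periodicity in Condition~\ref{cond:1} (and equals $0$ when $h\notin\Lambda-\Lambda$). The engine of the proof is the depth $k(s):=\ell_\Lambda(h\;;s)\in\{0,\dots,\ell\}$, together with the elementary observation that the depth drops by exactly one along the progression: if $s,s-h\in\Lambda$ then $k(s-h)=k(s)-1$ (the run $s,s-h,\dots,s-k(s)h$ lies in $\Lambda$ while $s-(k(s)+1)h$ does not, so the run from $s-h$ is one shorter). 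Hence any two sites of $\Lambda$ differing by $\pm h$ receive distinct depths. I would therefore put
\begin{align*}
S_{C,k}:=\{\,s\in\Lambda\;;\;s\equiv r_C\bmod M,\ k(s)=k\,\},
\end{align*}
which refines the coarse partition into the claimed disjoint union $\Lambda=\bigsqcup_{C,k}S_{C,k}$. To exhibit $S_{C,k}$ as $r_C+\Lambda_\S(V_{C,k})$, I translate the defining condition into the internal space: writing $u=m^\star$ for $s=r_C+m$, the requirement that $s-jh\in\Lambda$ for $0\le j\le k$ but not for $j=k+1$ reads $u\in\bigcap_{j=0}^{k}(W-r_C^\star+jh^\star)\setminus(W-r_C^\star+(k+1)h^\star)$, and intersecting with $\Hint$ defines the relatively compact window $V_{C,k}\subseteq W-r_C^\star$. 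Its boundary lies in a finite union of translates of $\partial W$, so $\theta(\partial W)=0$ yields $\vartheta(\partial_\Hint V_{C,k})=0$ by Lemma~\ref{prop:measure}; assertion~\ref{assert:density} then follows from Weyl's theorem (Proposition~\ref{prop:weyl}) for $\S$, since translating a set by the fixed $r_C$ leaves its density unchanged.

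It remains to prove assertion~\ref{assert:independence}, for which I would show that the blocks $I_s:=\{s,s-g\}$ (over $s\in S_{C,k}$ with $s-g\in\Lambda$) are pairwise $D$-separated; independence of $\{X_s\cconj{X_{s-g}}\}$ then follows by the induction of Lemma~\ref{lem:irms}, each $X_s\cconj{X_{s-g}}$ being a measurable function of the vector $(X_s,X_{s-g})$. For distinct $s,t\in S_{C,k}$ one has $I_s-I_t=\{d,\ d+g,\ d-g\}$ with $d:=s-t\in M\setminus\{0\}$, and $d\ne\pm h$ because $s$ and $t$ share the depth $k$. Then $d\notin D$ as $M\cap D=\{0\}$; if $d+g\in D$ then $d+g\in D\cap[r]\subseteq R\cap[r]=\{r\}$, forcing the excluded value $d=-h$; and if $d-g\in D$ then $g-d\in D$ since $D=-D$, so $g-d\in D\cap[r]=\{r\}$, again forcing the excluded $d=h$. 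Hence $(I_s-I_t)\cap D=\emptyset$. I expect this final reconciliation — matching the two forbidden differences created by the shift by $g$ against the single separation $d\ne\pm h$ supplied by the coloring — to be the main obstacle, and it is precisely what forces the strata to be cut out by $\ell_\Lambda(g-r\;;\cdot)$ rather than by any coarser invariant.
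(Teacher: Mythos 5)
Your proposal is correct and takes essentially the same route as the paper's own proof: the same application of Lemma~\ref{lemma:sub} to a finite d-set $D$, the same stratification of each residue class of $L/M$ by the ladder depth of $g-r$ (you define it via $\ell_{\Lambda_\O(W)}(g-r\,;s)$ in the physical space, the paper via the sets $W_k$ cut out by $\ell_W((g-r)^\star;\cdot)$ in the internal space, but these coincide on $\Lambda_\O(W)$ under the star map, giving identical windows $V_{C,k}$), the same pairwise $D$-separation of the blocks $\{s,s-g\}$ using $M\cap D=\{0\}$, $D\subseteq R$ and the depth-drop contradiction, and the same density argument via null boundaries, Lemma~\ref{prop:measure} and Weyl's theorem for the subscheme. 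I see no gap; your explicit remark that independence of the conjugated products follows from independence of the random vectors is in fact slightly more careful than the paper's wording.
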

\begin{proof} Let the subscheme $\S$ and the finite complete
 representation system $R$ be as in Lemma~\ref{lemma:sub}
 applied for the c.-p. scheme $\O$ and a d-set $D$ of the \textsc{frms}
 $\{ X_s\}_{s\in \Lambda_\O(W)}$. Since $0\in D\subseteq R$ by Lemma~\ref{lemma:sub}, if
 $0=g\equiv r \bmod M$
 then $r=0$.  Because $\Lambda_\O(W)$ is finitely periodic, $\ell$ is
finite, and we have relatively compact sets
\begin{align*}
W_k:=\{y \in W\;;\;\ell_W(g^\star-r^\star\;;\;y)=k\}\quad (0\le k\le
 \ell).
\end{align*}
Then $\Lambda_\O(W)=\coprod_{k=0}^{\ell} \{s\in L\;;\; s^\star\in W_k \}$ where
 $\coprod$ is a disjoint union. Since $L=\coprod_{C\in L/M} (r_C + M)$,
 we have $\Lambda_\O(W)=\coprod_{k=0}^{\ell}\coprod_{C\in L/M} \{s \in r_C+M\;;\; s^\star -r_C^\star \in (W_k - r_C^\star)\cap \Hint\}$. Therefore
 $\Lambda_\O(W)=\coprod_{k=0}^{\ell}\coprod_{C\in L/M} \Bigl( r_C +
 \Lambda_\S \bigl(\, (W_k - r_C^\star)\cap \Hint \bigr)\ \Bigr)$. So define
\begin{align}
V_{C,k} := (W_k - r_C^\star)\cap \Hint. \label{def:vcgkl}
\end{align}

Now we prove the assertion~\eqref{assert:independence} of Theorem~\ref{thm:partition} below.
By Lemma~\ref{lem:irms} with $N:=\{-g, 0\}$, it is sufficient to prove the following claim~:  for any distinct $s,
t \in S_{C,k}$, $(\{s, s-g\} - \{t, t-g\})\cap D=\emptyset$.
In other words, for any $g'\in D$ we have (i) $s - t = (s - g) - (t - g) \ne
g'$; (ii) $s - (t - g) \ne g'$; and (iii) $(s - g) - t \ne g'$.  

It is proved as follows:
By Lemma~\ref{lemma:sub}, $M\cap D=\{0\}$, which derives
the
assertion (i) from $s - t \in M \setminus \{0\}$.  If $g=0$, then (ii) and (iii) follow from (i). So let
$g\ne0$.  Assume (ii) is false. Then $g' - g = s - t \in M$, so
 $ g' \equiv g \equiv r \bmod M$ for a unique $r\in R$.
Then $g'=r$, because $g'\in D$ belongs to $R$. Therefore
$t -  s = g - r$.
But  $t^\star -(g^\star -r^\star)=s^\star\in W_k\ni t^\star$ 
 contradicts against the definition of $W_k$.  Therefore (ii)
 holds. The assertion (iii) follows from (ii) since $D=-D$.

We prove the assertion~\eqref{assert:density} of
 Theorem~\ref{thm:partition}. Recall that the \textsc{lcag} $\Hint$ of the
 subscheme $\S$ is a
 subgroup of the \textsc{lcag} $\Gint$. As in Lemma~\ref{prop:measure}, we write
 $\vartheta$ for a Haar measure of $\Hint$.
The premise
 $\theta\left(\partial_\Gint W\right)=0$ implies 
\begin{align}
\vartheta(\partial_\Hint V_{C,k})=0.\label{nullboundary}
\end{align}
To see it first observe that $W_k = W\cap (W + g -r ) \cap \cdots \cap
 (W + k (g - r)) \setminus (W + (k + 1) (g - r)) )$. So the premise implies
 $\theta\left(\partial_{\Gint} (W_k - r_C^\star) \right) = 0$ because of fact
 $\partial_\Gint (A\cap B)\subset\partial_\Gint(A)\cup
 \partial_\Gint(B)$ and fact
 $\partial_\Gint(A)=\partial_\Gint(\Gint\setminus A)$. Thus
 $\eqref{nullboundary}$ follows from Lemma~\ref{prop:measure}.

Since $\{D_n-r_C\}_n$ is a van Hove sequence too,  $\dense_{\{D_n-r_C\}_n} \left(
 \Lambda_\S(V_{C,k})\right)$ converges by 
Proposition~\ref{prop:weyl}. But it is
$\dense_{\{D_n\}_n} \left(S_{C,k}\right)$
by the definition \eqref{def:sck} of $S_{C,k}$.
\qed\end{proof}

\section{Absolutely continuous component of diffraction and covariance}\label{sec:debye}

If a complex-valued
 \textsc{frms} $\omega=\{X_s\}_{s\in\Lambda}$ satisfies
 Condition~\ref{cond:1} and all of the expectation $\E{X_s}$ and the
 covariances between $X_s$ and $X_{s-g}$ are ``continuous'' with
 respect to $s^\star\in\Gint$ for any $g\in \Lambda-\Lambda$, then  we
 quantitatively give the diffraction measure of  $\omega$ as
 follows:
\begin{itemize}
\item the inverse Fourier transform of the
 absolutely continuous component is a Dirac comb whose support is the
 smallest d-set; and
\item the pure-point component 
 is the diffraction measure of a Dirac comb 
      which is the expectation of the \textsc{frms} $\omega$.
\end{itemize} 

Here
\begin{definition} The \emph{expectation} of an \textsc{frms}
 $\omega=\{X_s\}_{s\in\Lambda}$ is, by definition, a Dirac comb
 $\E{\omega}=\{\E{X_s}\}_{s\in\Lambda}$, that is, $\sum_{s\in \Lambda}\E{X_s}\delta_s$.
\end{definition}

We use Kolmogorov's strong law of large numbers. By the
variance of an \textsc{rv} $X$, we mean $\V{X}=\E{\,\left| X -
\E{X}\,\right|^2}= \E{\,|X|^2}- \E{X}\E{\,\cconj{X}\,}$.

\begin{proposition}[\protect{\cite[Corollary~1.4.9]{MR1267569}}]\label{prop:kolmogorov}
Suppose that $\{b_m\;;\;m\in\Nset\}$ is a nondecreasing sequence of positive
 numbers which tends to infinity, and that a set
 $\{X_n\}_{n\in\Nset}$ of square integrable \textsc{rv}'s is
 independent. If $\sum_{m=1}^\infty \V{X_m}b_m^{-2}<\infty$, then
\begin{align*}\lim_{m\to\infty} \frac{1}{b_m} {\sum_{i=1}^m(X_i
-\E{X_i}\,)}=0\qquad(\mbox{almost surely}).
\end{align*} \end{proposition}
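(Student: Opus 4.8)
The plan is to recognize this as the classical Kolmogorov strong law of large numbers and to prove it by reducing the almost-sure convergence of the normalized sums to the almost-sure convergence of a single random series, after which Kronecker's lemma finishes the argument. First I would center the variables, setting $Y_m := X_m - \E{X_m}$, so that $\E{Y_m}=0$ and $\V{Y_m}=\V{X_m}$; it then suffices to show $\frac{1}{b_m}\sum_{i=1}^m Y_i \to 0$ almost surely. Since the $X_n$ are complex-valued, I would split each $Y_m$ into real and imaginary parts $Y_m = U_m + i V_m$. Independence of $\{X_n\}_{n\in\Nset}$ passes to $\{U_n\}_{n\in\Nset}$ and to $\{V_n\}_{n\in\Nset}$ (each $U_m$, $V_m$ being a function of $X_m$ alone), both families are square integrable and mean zero, and the identity $\V{Y_m} = \E{U_m^2} + \E{V_m^2}$ shows that $\sum_m \E{U_m^2}/b_m^2$ and $\sum_m \E{V_m^2}/b_m^2$ are each finite. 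Hence it is enough to treat the real case, which I do from now on, writing $Y_m$ for a real, mean-zero, square-integrable variable.

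The key probabilistic step is to show that the random series $\sum_{m=1}^\infty Y_m / b_m$ converges almost surely. Its terms $Y_m/b_m$ are independent, mean zero, with $\sum_m \V{Y_m/b_m} = \sum_m \V{X_m}/b_m^2 < \infty$ by hypothesis. To obtain convergence I would invoke Kolmogorov's maximal inequality: for independent mean-zero square-integrable variables $Z_1,\dots,Z_k$ with partial sums $T_j = \sum_{i=1}^j Z_i$, one has $\mathrm{P}(\max_{j\le k}|T_j| \ge \varepsilon) \le \varepsilon^{-2}\sum_{i=1}^k \V{Z_i}$. Applying this to the tail blocks $Z_i = Y_{N+i}/b_{N+i}$ bounds $\max_{N < j \le M}|\sum_{i=N+1}^j Y_i/b_i|$ in probability by $\varepsilon^{-2}\sum_{i>N}\V{X_i}/b_i^2$, which tends to $0$ as $N\to\infty$. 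Hence the partial sums of $\sum_m Y_m/b_m$ are almost surely Cauchy, so the series converges almost surely. This is precisely Kolmogorov's one-series theorem, and is the substance one would otherwise simply cite.

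Finally I would apply Kronecker's lemma, a purely deterministic statement: if $\{b_m\}_{m\in\Nset}$ is positive, nondecreasing and tends to infinity and the numerical series $\sum_m a_m/b_m$ converges, then $\frac{1}{b_m}\sum_{i=1}^m a_i \to 0$. Taking $a_m = Y_m$ on the almost-sure event where $\sum_m Y_m/b_m$ converges yields $\frac{1}{b_m}\sum_{i=1}^m Y_i \to 0$ almost surely, and recombining the real and imaginary parts gives the claimed limit for the original complex variables.

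The step I expect to be the main obstacle is the almost-sure convergence of the random series, that is, the use of Kolmogorov's maximal inequality; everything after it (Kronecker's lemma) is deterministic, and the reductions by centering and by splitting into real and imaginary parts are routine. Since the statement is quoted here as a textbook result, one could alternatively dispatch the whole proposition by a single reference, but the maximal-inequality argument above is the \emph{core} being relied upon.
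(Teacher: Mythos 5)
Your proof is correct: the centering and real/imaginary reduction, the one-series theorem via Kolmogorov's maximal inequality, and Kronecker's lemma are exactly the standard route. The paper itself gives no proof of this proposition---it is quoted verbatim as Corollary~1.4.9 of the cited reference (Stroock)---and your argument is essentially the one carried out there, so there is nothing to reconcile.
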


\begin{lemma}\label{lem:a}
If a set $A\subset\fG$ is a nonempty and discrete  and  $\{Y_s\}_{s\in A}$ is an independent set of
 \textsc{rv}'s with the variances $\mathrm{V}[Y_s]$ bounded uniformly from
 above, then for any van Hove sequence $\{D_n\}_n$
\begin{align*}
\lim_{n\to\infty}\;\frac{1}{\#(A\cap D_n)}\sum_{s\in A\cap D_n}
 (Y_s - \E{Y_s})\;=0\quad(\mbox{almost surely}).\end{align*}
\end{lemma}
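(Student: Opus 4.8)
The plan is to reduce the statement to Kolmogorov's strong law of large numbers (Proposition~\ref{prop:kolmogorov}) by choosing an enumeration of $A$ compatible with the filtration by the van Hove sets. First I would observe that $A$ is countable: a discrete subset of the $\sigma$-compact \textsc{lcag} $\fG$ meets each compact set in a finite set, so every $A\cap D_n$ is finite and $\bigcup_n (A\cap D_n)$ is countable. Points of $A$ lying outside every $D_n$ never enter any of the sums, so I may discard them and assume $A=\bigcup_n (A\cap D_n)$. I would also record that the assertion is only meaningful when $\#(A\cap D_n)\to\infty$ (equivalently, when $A$ is infinite), which I henceforth take as a standing assumption; if instead $A$ is finite, then $\#(A\cap D_n)$ stabilizes and the normalized sum need not vanish, so the hypotheses as literally stated would not force the conclusion.

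Next I would fix an enumeration. Since $\{D_n\}_n$ is increasing, the finite sets $A\cap D_n$ are nested and increasing, so I can enumerate $A=\{s_1,s_2,\ldots\}$ by ordering each point according to the least index at which it enters a van Hove set (ties broken arbitrarily), arranging that $A\cap D_n=\{s_1,\ldots,s_{m_n}\}$, where $m_n:=\#(A\cap D_n)$ is a nondecreasing sequence of positive integers tending to infinity. The whole point of this bookkeeping is that the partial sum $\sum_{i=1}^{m_n}\bigl(Y_{s_i}-\E{Y_{s_i}}\bigr)$ coincides exactly with $\sum_{s\in A\cap D_n}\bigl(Y_s-\E{Y_s}\bigr)$.

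Now I would apply Kolmogorov's law. Put $X_i:=Y_{s_i}$ and $b_m:=m$. The family $\{X_i\}_i$ is independent as a subfamily of $\{Y_s\}_{s\in A}$, each $X_i$ is square integrable with $\V{X_i}\le V_0$ for the uniform bound $V_0$, and $b_m=m$ is nondecreasing, positive, and divergent. Then $\sum_{m\ge 1}\V{X_m}b_m^{-2}\le V_0\sum_{m\ge 1}m^{-2}<\infty$, so Proposition~\ref{prop:kolmogorov} yields $\tfrac1m\sum_{i=1}^m\bigl(X_i-\E{X_i}\bigr)\to 0$ almost surely as $m\to\infty$. This holds on a single event of full probability; restricting that event to the deterministic subsequence $m=m_n\uparrow\infty$ gives $\tfrac{1}{m_n}\sum_{i=1}^{m_n}\bigl(X_i-\E{X_i}\bigr)\to 0$ almost surely, which by the chosen enumeration is precisely the claimed limit.

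The step I expect to require the most care is the reindexing together with the transfer of the almost-sure convergence to the subsequence $\{m_n\}_n$: I must ensure the ordering of $A$ is genuinely consistent with the nested sets $A\cap D_n$ so that the partial sums match, and note that the full-sequence almost-sure convergence descends automatically to any deterministic subsequence on the same null-complement. The only other point to keep in mind is the degenerate finite-$A$ case, which is why the tacit hypothesis $\#(A\cap D_n)\to\infty$ should be made explicit; in the intended applications the sets $A$ have positive density and are in particular infinite, so this causes no difficulty.
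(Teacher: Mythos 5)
Your proof is correct and follows essentially the same route as the paper's: enumerate $A$ so that the enumeration exhausts $A\cap D_1$, then $A\cap D_2$, and so on, invoke Proposition~\ref{prop:kolmogorov}, and pass to the subsequence indexed by the van Hove sets; the only cosmetic difference is that the paper normalizes by $b_m=\#(A\cap D_n)$ for the least $n$ with $s_m\in D_n$ (which satisfies $b_m\ge m$), whereas you take $b_m=m$, and at the relevant indices $m=\#(A\cap D_n)$ the two normalizations coincide. Your explicit treatment of the degenerate cases (points of $A$ lying outside $\bigcup_n D_n$, and finite $A$, where the statement as literally written can fail) is a point of care that the paper passes over silently.
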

\begin{proof}There is an enumeration  $\{s_i\}_{i\in\Nset}$ of $A$
 without repetition which exhausts $A\cap D_1$ first then $A\cap D_2$,
$A\cap D_3$, and so on, because $A\subseteq \bigcup_n D_n$ and because $\#(A\cap D_n)<\infty$ follows from the
 discreteness of $A$ and the compactness of $D_n$. For $m\in\Nset$ let
$b_m$ be $\#(A\cap D_n)$ where $n$ is the smallest integer such that
 $s_m\in A\cap D_n$. If
 $m=\#(A\cap D_n)$ for such $n$, then $b_m=m$. In general, we have
$b_m\ge m$, because
the van Hove sequence $\{D_n\}_n$ is increasing.
So $\sum_{m=1}^\infty
b_m^{-2}\le\sum_{m=1}^\infty m^{-2}<\infty$. Because $\mathrm{V}[Y_s]$
 is uniformly bounded from above by the premise, we have
$\sum_{m=1}^\infty
{\mathrm{V}[Y_{s_m}]}m^{-2}<\infty$. Since the sequence
$\{Y_{s_i}\}_{i\in\Nset}$ is independent from the premise,
 Proposition~\ref{prop:kolmogorov} implies that the sequence
$\bigl\{\,\sum_{i=1}^m(Y_{s_i} - \E{Y_{s_i}}\,)\;/\;{b_m}\,
\bigr\}_{m\in\Nset}$ converges to 0 almost surely. Hence a
subsequence $\bigl\{\;\sum_{s\in A\cap D_n}(Y_s -
\E{Y_s}\,)\,/\,{\#(A\cap D_n)\;} \bigr\}_{n\in\Nset}$ does so almost surely.
\qed\end{proof}

By the \emph{covariance} between complex-valued \textsc{rv}'s $X_s$ and $X_t$, we mean
\begin{align*}\cov[X_s,\, X_t]= \E{\,(X_s-\E{X_s})\cconj{(X_t-\E{X_t})}\,}=\E{X_s\cconj{X_t}}- \E{X_s}\E{\cconj{X_t}}.\end{align*}

\begin{assumption}\label{cond:envelope} An \textsc{frms}  $\{X_s\}_{s\in \Lambda(W)}$
 has functions $e, c_g\in \cbp$ such that
\begin{align*}
\E{X_s}&=e(s^\star),\quad      &\left(s\in\Lambda(W)\;\right);\\
\cov[X_s,\,X_{s-g}]&=c_g(s^\star),&\left(g\in \Lambda(W)-\Lambda(W),\
 s\in \Lambda(W)\cap (\Lambda(W) + g)\right).
\end{align*}
\end{assumption}

\begin{example}In Example~\ref{eg:disjunion}, suppose that there are $m\in C(\Gint, \Rset^n)$ and $S\in C(\Gint, \Rset^{n\times
 n})$ such that 
$m(t^\star)$ is the mean $\E{\vec{Y}_t}$ and $S(t^\star)$ is the
 covariance matrix $\E{(\vec{Y}_t - \E{\vec{Y}_t})(\vec{Y}_t -
 \E{\vec{Y}_t})^\top}$ for all $t\in \Gamma$.
 Then functions $e,c_g$ $(g\in \Lambda-\Lambda)$ indeed belong to $\cbp$.
For the \textsc{frms} of Example~\ref{eg:rmsshift},
assume further that each shift $y_s$ is subject to a continuous probabilistic density
function $h$ with $\supp{h}= R$. Then,
for $s\in L$, $(1_W\ast h)(s^\star) = \int_\Gint 1_W(s^\star- y)h(y)d y = \int _{W\ni s^\star- y}h(y)d y =P(s^\star\in W +
y_s)$, the probability for $s\in \Lambda(W+R)$ to indeed appear. So $e=1_W\ast h$. 
\end{example} 

\begin{theorem}\label{thm:main}
Let $\omega=\{X_s\}_{s\in \Lambda(W)}$ be an \textsc{frms} such that
Condition~\ref{cond:envelope} holds, $\{\,\E{\,|X_s|^4}\,\}_{s\in\Lambda(W)}$ is
 bounded, and $W$ is compact but the Haar measure 
 $\theta(\partial W)$ of $\Gint$ is 0. 
Then the  diffraction measure $\widehat{\gamma_\omega}$ of
 $\omega$ is almost surely
$\dual{\gamma_{\E{\omega}}}\ +\ A$,
where 
\begin{enumerate}
\item $\dual{\gamma_{\E{\omega}}}$ is a pure-point diffraction measure.
\item $A$ is an absolutely continuous, real-valued measure on $\dual
 \fG$. In fact, there is some d-set  $D$ of the \textsc{frms}
      $\omega$  such that the Radon-Nikod\'ym derivative of $A$ with respect to the Haar
      measure $d\chi$ of $\dual\fG$ is $\sum_{g\in D} A_g
      \chi(-g)$ where 
\begin{align*}
A_g
= \int_{W \cap (W+g^\star)} c_g(y)d y =
 \lim_{n\to\infty}\sum_{s\in\Lambda(W)\cap (\Lambda(W)+g)\cap
 D_n}\frac{\cov[X_s ,\, X_{s-g}\,]}{|D_n|}.
\end{align*}

\end{enumerate}
\end{theorem}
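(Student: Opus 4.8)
The plan is to compute the autocorrelation measure $\gamma_\omega$ explicitly and then transfer the splitting through the Fourier transform using the results of Section~2. First I would write the finite approximants
\[
\eta_\omega^{(n)}(g) = \frac{1}{|D_n|}\sum_{\substack{s \in \Lambda(W)\cap D_n\\ s-g\in\Lambda(W)}} X_s\cconj{X_{s-g}},
\]
and split each summand into its mean and a centered fluctuation, $X_s\cconj{X_{s-g}} = \E{X_s\cconj{X_{s-g}}} + \bigl(X_s\cconj{X_{s-g}} - \E{X_s\cconj{X_{s-g}}}\bigr)$. Since $S=\Lambda(W)$ is \textsc{flc}, the set $S-S$ is discrete and hence countable, so it suffices to establish the almost-sure convergence of $\eta_\omega^{(n)}(g)$ for each fixed $g$ and then intersect the countably many almost-sure events; vague convergence of $\gamma_\omega^{(n)}$ to $\gamma_\omega$ then follows because any $\varphi\in C_c(\fG)$ meets only finitely many points of the locally finite set $S-S$.

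The technical heart, which I expect to be the main obstacle, is to show that the fluctuation average tends to $0$ almost surely for each fixed $g$. Here I would invoke the Decomposition Theorem~\ref{thm:partition}, which partitions $\Lambda(W)$ into finitely many pieces $S_{C,k}$ such that each family $\{X_s\cconj{X_{s-g}}\;;\;s\in S_{C,k},\ s-g\in\Lambda(W)\}$ is independent and each $S_{C,k}$ possesses a density $\dense_{\{D_n\}_n}(S_{C,k})$. To apply the law of large numbers of Lemma~\ref{lem:a} on each piece I must bound the variances of the products uniformly, and this is exactly where the boundedness of the fourth moments enters, since by Cauchy--Schwarz $\V{X_s\cconj{X_{s-g}}}\le\E{|X_s\cconj{X_{s-g}}|^2}\le\sqrt{\E{|X_s|^4}\,\E{|X_{s-g}|^4}}$. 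Writing the full average as $\sum_{C,k}\frac{\#(S_{C,k}\cap D_n)}{|D_n|}\cdot\frac{1}{\#(S_{C,k}\cap D_n)}\sum_{s\in S_{C,k}\cap D_n}(\cdots)$, each density factor converges (hence stays bounded) while each inner average tends to $0$ almost surely by Lemma~\ref{lem:a}; since the pieces are finite in number, the whole sum vanishes almost surely. A routine van Hove boundary estimate lets me drop the constraint $s-g\in D_n$ at no cost in the limit.

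It then remains to evaluate the deterministic limit of the mean average, and I would split $\E{X_s\cconj{X_{s-g}}} = e(s^\star)\cconj{e(s^\star-g^\star)} + c_g(s^\star)$ via Condition~\ref{cond:envelope}, noting $(s-g)^\star=s^\star-g^\star$ and that $s-g\in\Lambda(W)$ forces $s^\star\in W\cap(W+g^\star)$. Weyl's theorem for model sets (Proposition~\ref{prop:weyl}), applied to the functions $e(\cdot)\cconj{e(\cdot-g^\star)}$ and $c_g$ in $\cbp$ over the window $W\cap(W+g^\star)$ (whose boundary is $\theta$-null since $\theta(\partial W)=0$), shows that both averages converge; the first assembles into the autocorrelation coefficient $\eta_{\E{\omega}}(g)$ of the deterministic comb $\E{\omega}=\sum_s e(s^\star)\delta_s$, while the second gives $A_g=\int_{W\cap(W+g^\star)}c_g(y)\,dy$ in the fixed normalisation, matching the second displayed expression for $A_g$. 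Moreover the d-set property forces $\cov[X_s,X_{s-g}]=0$, hence $A_g=0$, whenever $g\notin D$, so only finitely many $A_g$ survive. Thus almost surely $\gamma_\omega = \gamma_{\E{\omega}} + \sum_{g\in D} A_g\delta_g$.

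Finally I would Fourier transform this identity. The term $\dual{\gamma_{\E{\omega}}}$ is, by Proposition~\ref{prop:modelsetpp} applied to the continuous weight $e$ restricted to $Cl(W)$, a pure-point measure, which is assertion~(1). The remaining term is the Fourier transform of the finite Dirac comb $\sum_{g\in D}A_g\delta_g$, namely the absolutely continuous measure whose Radon--Nikod\'ym derivative against the Haar measure $d\chi$ of $\dual\fG$ is the trigonometric polynomial $\sum_{g\in D}A_g\chi(-g)$. It is real-valued because $\widetilde{\gamma_\omega}=\gamma_\omega$ and $\widetilde{\gamma_{\E{\omega}}}=\gamma_{\E{\omega}}$ force the Hermitian symmetry $A_{-g}=\cconj{A_g}$, which renders this polynomial real. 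This establishes assertion~(2) and completes the proof.
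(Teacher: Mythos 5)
Your proposal is correct and takes essentially the same route as the paper's own proof: the same decomposition of $\Lambda(W)$ into independent pieces via Theorem~\ref{thm:partition}, the same strong law (Lemma~\ref{lem:a}) applied to the centered products $X_s\cconj{X_{s-g}}$ with the fourth-moment bound controlling the variances, a van Hove boundary estimate to drop the constraint $s-g\in D_n$, Weyl's theorem (Proposition~\ref{prop:weyl}) for the deterministic averages, Proposition~\ref{prop:modelsetpp} for pure-pointness of $\dual{\gamma_{\E{\omega}}}$, and the d-set property to force $A_g=0$ for $g\notin D$. The only (harmless) deviations are cosmetic: you evaluate the mean part globally over the model set $\Lambda(W\cap(W+g^\star))$ rather than threading it through the pieces, and you spell out the countability of $S-S$ and the Hermitian symmetry $A_{-g}=\cconj{A_g}$, details the paper leaves implicit.
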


\begin{proof} We use the notation of Theorem~\ref{thm:partition}.
Let $g\in \Lambda_\O(W)-\Lambda_\O(W)$. Because $\Lambda_\O(W)$ is the
 disjoint union of $S_{C,k}$'s over $(C,k)\in (L/M)\times\{0,\ldots,
 \ell\}$,  we have
$s,s-g\in \Lambda_\O(W)\cap D_n$ if and only if there is a
 unique $(C,k)\in (L/M)\times\{0,\ldots, \ell\}$ such
 that $s\in S_{C,k}\cap (\Lambda_\O(W)+g)\cap D_n\cap (D_n+g)$. Furthermore
 $r_C -g \in R - M$ and 
\begin{align} \label{eq:intersection} 
&S_{C,k}\cap (\Lambda_\O(W)+g) = r_C +     \Lambda_\S(W_{C,k}), \end{align}
where
\begin{align}
&W_{C,k}:= V_{C,k} \cap
\bigl( (W-r_C^\star+g^\star)\cap \Hint\bigr). \label{claim:1:2}
\end{align}

Thus, 
\begin{align} \label{claim:1:3}
\lim_{n\to\infty} \sum_{s\in \Lambda_\O(W)\cap D_n }
\frac{(\cdots)}{|D_n|}
=
\sum_{\scriptsize\begin{array}{l}C\in (L/M), \\ 0 \le k \le \ell\end{array}}\
 \lim_{n\to\infty} \sum_{\scriptsize \begin{array}{lll}s&\in& (r_C + \Lambda_\S(W_{C,k}))\\ & \cap& D_n
\cap (D_n+g)\end{array}} \frac{(\cdots)}{|D_n|}. 
\end{align}

Suppose $C\in L/M$, $0 \le k\le \ell$ and
 $S_{C,k}\ne\emptyset$. 

\begin{myclaim}\label{claim:b}For any $f\in \cbp$, as $n\to\infty$, both of the summation of
$f(s^\star)|D_n|^{-1}$ over $s\in (r_C + \Lambda_\S(W_{C,k})) \cap D_n\cap (D_n+g)$
and the summation of $f(s^\star)|D_n|^{-1}$ over $s\in (r_C + \Lambda_\S(W_{C,k}))\cap D_n$ converge to the same value.
\end{myclaim}

\begin{proof}
The absolute value $\Delta$ of the difference between the two summations is not
greater than $ \sum_{s\in (r_C + \Lambda_\S(W_{C,k}))
\cap (D_n\setminus (D_n+g))} |f(s^\star)|\,/\, |D_n|$. Here, by \eqref{eq:intersection}, a set  $r_C+
\Lambda_\S(W_{C,k})$ is a subset of $\Lambda_\O(W)+g$, and is uniformly
 discrete.
Thus by Lemma~\ref{lem:ud}, there is a compact neighborhood $U$ of $0$
such that for all $n$, $\#\Bigl(\bigl(r_C + \Lambda_\S(W_{C,k})\bigr) \cap
\bigl(D_n\setminus (D_n+g)\bigr)\Bigr)\le \left|D_n\setminus
(D_n+g)\right|\,/\,|U|$. By the premise $f\in \cbp$, there is $b\ge 0$ such
 that $|f(y)|\le b$ for all
 $y\in \Gint$. Thus $\Delta\le{\left|D_n\setminus(D_n-g)\right|}\cdot {b}{|U|}^{-1}\,/\,{|D_n|}
={b}{|U|}^{-1} {\left|(D_n+g)\setminus D_n\right|}\,/\,{|D_n|} \le
{b}|U|^{-1}\, {\left|\partial^{\{0, -g\}}(D_n)\right|}\,/\,{|D_n|}\to 0$.
Moreover, by Proposition~\ref{prop:weyl}, the summation of
 $f(s^\star)\;/\;|D_n|$ over
${s\in (r_C + \Lambda_\S(W_{C,k}))
\cap D_n} $ converges as $n$ goes to infinity. To see it, the
 definition~\eqref{claim:1:2} of $W_{C,k}$
implies $\partial_\Hint(W_{C,k})$ is contained by $\partial_\Hint(V_{C,k})\cup
\partial_\Hint\left( (W-r_C^\star +g^\star) \cap \Hint\right)$ which has null
measure with respect to the Haar measure $\vartheta$ of $\Hint$, where
$\vartheta\left(\partial_\Hint(V_{C,k})\right)=0$ by
\eqref{nullboundary}, while the premise $\theta(\partial W)=0$ of Theorem~\ref{thm:main} implies that $
(W-r_C^\star+g^\star) \cap \Hint$ has $\vartheta$-null boundary by
Lemma~\ref{prop:measure}.\qed\end{proof}

\begin{myclaim}\label{claim:3}
It holds almost surely that as $n$ goes to $\infty$, 
\begin{align}
  \sum_{\scriptsize\begin{array}{lll}s & \in &(r_C + \Lambda_\S(W_{C,k})) \\
                           &&\cap\; D_n\cap\; (D_n+g)\end{array}}
\frac{X_s \cconj{X_{s-g}}}{|D_n|}
\ \ -
  \sum_{\scriptsize\begin{array}{lll}s & \in &(r_C + \Lambda_\S(W_{C,k})) \\
                           &&\cap\; D_n\cap\;
		   (D_n+g)\end{array}}\frac{\mathrm{E}\left[X_s
 \cconj{X_{s-g}} \,\right]}{|D_n|} \to 0.
\label{eq:8}
\end{align}
\end{myclaim} 
\begin{proof}
Note that the left-hand side of \eqref{eq:8}
 is the product of the following two: 
\begin{align*}
u_n &:= \sum_{s\in (r_C + \Lambda_\S(W_{C,k})) \cap
 D_n\cap (D_n+g)}\frac{X_s \cconj{X_{s-g}}- \mathrm{E}\left[X_s
 \cconj{X_{s-g}}\,\right]}{ \# \left\{ (r_C + \Lambda_\S(W_{C,k})) \cap
 D_n\cap (D_n+g) \right\}},\\
v_n&:={\#\left\{(r_C + \Lambda_\S(W_{C,k})) \cap
 D_n\cap (D_n+g)\right\}} / |D_n|.
\end{align*}
Here $u_n$ tends to
 $0$ almost surely, because by the premise 
 the variances 
$\V{X_{s} \cconj{X_{s-g}}}\le \E{\,\left|X_{s}\cconj{X_{s-g}}\right|^2}\le 
\E{\,|X_{s}|^4}/2+\E{\,|X_{s-g}|^4}/2$ are uniformly bounded, 
from which 
 Theorem~\ref{thm:partition}~\eqref{assert:independence}, the equality~\eqref{claim:1:3} and
 Lemma~\ref{lem:a} imply that $\lim_{n\to\infty} u_n=0$ almost surely.
On the other hand, $v_n$ tends to $\dense_{\{D_n\}_n} \
 \Lambda_\S(W_{C,k})<\infty$ by Claim~\ref{claim:b}.  \qed
\end{proof} 

The second sum in the left-hand side of \eqref{eq:8} has the following limit
in the limit of $n$.

\begin{myclaim}\label{claim:4} As $n$ goes to infinity, $\sum_{\scriptsize\begin{array}{lll}s & \in &(r_C + \Lambda_\S(W_{C,k})) \\
                           &&\cap\; D_n\cap
		   (D_n+g)\end{array}}\fra{\mathrm{E}\left[X_s
 \cconj{X_{s-g}} \,\right]}{|D_n|}$
converges to
\begin{align}
 \lim_{n\to\infty} \sum_{\scriptsize\begin{array}{lll}s & \in &(r_C + \Lambda_\S(W_{C,k})) \\
                           &&\cap\; D_n\;\cap
		   (D_n+g)\end{array}} \frac{e(s^\star)\cconj{e(s^\star -g)}}{|D_n|}  
+ \lim_{n\to\infty}
 \sum_{\scriptsize \begin{array}{lll}s &\in &(r_C + \Lambda_\S(W_{C,k})) \\ &&\;\cap\;  D_n\end{array} } \frac{c_g(s^\star)}{|D_n|} . \label{eq:9}
\end{align}
\end{myclaim}
\begin{proof} By Condition~\ref{cond:envelope},
 Claim~\ref{claim:b} and Proposition~\ref{prop:weyl}, the
 two limits in \eqref{eq:9} are convergent, and \eqref{eq:9} is
\begin{align*}
\lim_{n\to\infty} \frac{1}{|D_n|} \sum_{s\in (r_C + \Lambda_\S(W_{C,k})) \cap D_n\cap
 (D_n+g)} \left(e(s^\star)\cconj{e(s^\star -g^\star)} +
 c_g(s^\star)\right).\end{align*} But the summand is $\E{X_s
 \cconj{X_{s-g}}\,}$ by Condition~\ref{cond:envelope}.
\qed\end{proof}

By taking the summation \eqref{eq:8} and \eqref{eq:9} respectively over any $(C,k)\in (L/M)\times\{0,\ldots, \ell\}$ such that $S_{C,k}\ne\emptyset$,  by
\eqref{claim:1:3}, we have
 almost surely $ \eta_\omega(g)= \eta_{\E{\omega}}(g) + A_g$, and thus
 $\widehat{\gamma_\omega}= \widehat{\gamma_{\E{\omega}}} + \sum_{g\in S
 - S} A_g\chi(-g)$.

\medskip
We continue the proof of Theorem~\ref{thm:main}.

The assertion~(1) of Theorem~\ref{thm:main} holds because the
 pure-pointness of $\E{\omega}$  follows from Proposition~\ref{prop:modelsetpp} and
 Condition~\ref{cond:envelope}.

The assertion~(2) is proved as follows: The Radon-Nikod\'ym derivative
 $A(\chi)=\sum_{g\in S - S}A_g\chi(-g)$ is actually  $A(\chi)=\sum_{g\in
 D}A_g\chi(-g)$ for any d-set $D$ of the \textsc{frms} $\omega$. Indeed  $\cov[X_s,\,
X_{s - g}]$ is equal to $0$ for any $g\in
(S - S)\setminus D$  and for any $s\in \Lambda_\O(W)\cap (\Lambda_\O(W)+g)$, because a pair of $X_s$
 and $X_{s-g}$ is independent for this $g$ by Definition~\ref{def:depset}.
This completes the proof of Theorem~\ref{thm:main}.\qed\end{proof} 

\begin{corollary}Under the same assumption as Theorem~\ref{thm:main},
 the \emph{smallest} d-set of the \textsc{frms} is the
 support of the Dirac comb which is obtained by the inverse Fourier transform of
 the absolutely continuous component $A$ of the diffraction measure
$\widehat{\gamma_\omega}$.\end{corollary}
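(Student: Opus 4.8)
The plan is to identify the support of the Dirac comb $\sum_{g}A_{g}\delta_{g}$---which, by Theorem~\ref{thm:main}, is the inverse Fourier transform of the absolutely continuous component $A$---with the smallest d-set of $\omega$ by proving two complementary facts about the set $\{g\;;\;A_{g}\neq0\}$: first, that it is contained in \emph{every} d-set of $\omega$; and second, that it is \emph{itself} a d-set. Granting both, the set is a d-set lying inside all d-sets, hence is their minimum, and the Corollary follows at once.

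I would first observe that the containment fact is already latent in the proof of Theorem~\ref{thm:main}. Let $D$ be any d-set and let $g\in(\Lambda(W)-\Lambda(W))\setminus D$. Applying Definition~\ref{def:depset} to the singletons $P=\{s\}$ and $Q=\{s-g\}$ shows that $X_{s}$ and $X_{s-g}$ are independent for every admissible $s$, so $\cov[X_{s},\,X_{s-g}]=0$; by the limit formula $A_{g}=\lim_{n\to\infty}|D_{n}|^{-1}\sum_{s}\cov[X_{s},\,X_{s-g}]$ of Theorem~\ref{thm:main}, this gives $A_{g}=0$. Contrapositively, $A_{g}\neq0$ forces $g\in D$, so $\{g\;;\;A_{g}\neq0\}\subseteq D$ for each d-set $D$.

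For the second fact I would verify the defining property of a d-set for $\{g\;;\;A_{g}\neq0\}$ directly. Given finite $P,Q\subseteq\Lambda(W)$ with $(P-Q)\cap\{g\;;\;A_{g}\neq0\}=\emptyset$, I must deduce the independence of $(X_{s})_{s\in P}$ and $(X_{s})_{s\in Q}$. The intended route is to use $A_{p-q}=0$ together with the continuity of $c_{p-q}$ from Condition~\ref{cond:envelope} to conclude $c_{p-q}\equiv0$ on $W\cap(W+(p-q)^{\star})$---so that no pair straddling $P$ and $Q$ is correlated---and then to upgrade this absence of correlation to genuine independence by appealing to the way an \textsc{frms} is assembled. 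In the generating model of Example~\ref{eg:disjunion}, for instance, the field is built from an independent family $\{\vec{Y}_{t}\}_{t}$ with all genuine dependence confined to the finite blocks indexed by the representation system, so that a range carrying no covariance is exactly a range across which the field splits into independent pieces.

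The main obstacle is precisely this last upgrade. The passage from ``uncorrelated'' to ``independent'' is false for arbitrary random fields, and moreover $A_{g}=\int_{W\cap(W+g^{\star})}c_{g}(y)\,d y$ can in principle vanish through cancellation even when $c_{g}\not\equiv0$; hence the second fact cannot be extracted from the second-order data alone and must rest on the structural hypotheses under which the \textsc{frms} is realized---namely that some finite d-set already separates the field into independent blocks whose internal dependence is detected by the covariance. I would therefore isolate a lemma asserting that, for the \textsc{frms} under consideration, the shift set $\{g\;;\;A_{g}\neq0\}$ is already a d-set; combined with the containment fact, this lemma identifies $\{g\;;\;A_{g}\neq0\}$ with the smallest d-set and, since that set is exactly the support of the comb $\sum_{g}A_{g}\delta_{g}$, completes the proof of the Corollary.
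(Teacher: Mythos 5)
Your first containment fact is exactly the paper's own reasoning: in the proof of assertion~(2) of Theorem~\ref{thm:main}, the paper observes that for any d-set $D$ and any $g\notin D$ the pair $X_s$, $X_{s-g}$ is independent by Definition~\ref{def:depset}, hence $\cov[X_s,\,X_{s-g}]=0$ and $A_g=0$, so that $\{g\;;\;A_g\neq0\}\subseteq D$ for every d-set $D$. Beyond that, the paper's entire proof of the corollary is the one-line Fourier inversion
\begin{align*}
\int_{\dual\fG}\sum_{g\in D}A_g\,\chi(-g)\chi(x)\,d\chi
=\sum_{g\in D}A_g\,\delta_g(x),
\end{align*}
identifying the inverse transform of $A$ with the weighted Dirac comb; the paper never attempts anything like your second fact.

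That second fact --- that $\{g\;;\;A_g\neq0\}$ is itself a d-set --- is where your proposal has a genuine gap, and you have diagnosed the obstruction correctly yourself: vanishing covariance does not imply independence, and $A_g=\int_{W\cap(W+g^\star)}c_g(y)\,dy$ can vanish by cancellation even when $c_g\not\equiv0$, so no argument from the stated hypotheses (Condition~\ref{cond:envelope}, bounded fourth moments, a finite d-set) can deliver the ``lemma'' you defer to; it is simply not a consequence of second-order data, and appealing to the special structure of Example~\ref{eg:disjunion} would prove the claim only for that model, not for a general \textsc{frms}. Note also that d-sets are not closed under intersection, so a smallest d-set need not exist at all. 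What is actually provable --- and what the paper in effect proves --- is only the lower-bound statement: the support of the comb $\sum_g A_g\delta_g$ is contained in every d-set of $\omega$. Your proposal is therefore incomplete as a proof of the literal statement, but the incompleteness you ran into exposes an imprecision in the corollary itself rather than a missing idea on your part; the paper's terse proof quietly reads ``smallest d-set'' as this lower bound rather than as a genuine minimum in the family of d-sets.
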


\begin{proof}The inverse Fourier transform of $A$ is $\int_{\dual\fG}
 \sum_{g\in D} A_g \chi(-g) \chi(x) d\chi$ which is $\sum_{g\in D}\int_{\dual\fG} A_g \chi(-g+x) d\chi =\sum_{g\in
 D}A_g\delta_g(x)$. 
\qed\end{proof}

\begin{example}
If the \textsc{frms} in Theorem~\ref{thm:main} is independent,
 it has a d-set $D=\{0\}$ by
Lemma~\ref{lem:irms} and  the absolutely continuous component of the
 diffraction measure
 $\widehat{\gamma_\omega}$ is
$A= \lim_n \sum_{s\in \Lambda(W)\cap D_n}{\V{X_s}}/{|D_n|}$.
\end{example}

If we add a mild condition ``$Cl(Int(W))=W$'' to the theorem, we can
quantitatively provide the pure-point component $\widehat{\gamma_{\E{\omega}}}$ by
using a following theorem (Theorem~\ref{thm:2}) (and can dispense with Proposition~\ref{prop:modelsetpp}.)

\section{Diffraction of weighted Dirac comb and torus parametrization}\label{sec:intensity}

Let $\omega$ be a weighted Dirac comb on $\Lambda(W)$ where $\Lambda(W)$ is a 
c.-p.\@ set over a c.-p.\@ scheme $\S=(\fG,
\Gint, \tilde L)$. To describe the support of $\widehat{\gamma_\omega}$, we use the
dual c.-p.\@ scheme  $(\dual\fG, \dual\Gint, \tilde\cL)$ for
the c.-p.\@ scheme $\S$. Here the lattice $\tilde \cL$ is the \emph{annihilator} of $\tilde
L$ in $\widehat{\fG\times \Gint}\simeq\widehat{\fG} \times \widehat{\Gint}$. That
is $\tilde \cL$ is the
\textsc{lcag} of $(\chi, \eta) \in \dual\fG \times \dual\Gint$ such that
$ \chi(s)\eta(s^\star) = 1$ for all $(s, s^\star) \in \tilde L$. Then
$(\dual\fG, \widehat{\Gint}, \tilde\cL)$ is indeed a c.-p.\@ scheme. See
\cite[Section~5]{MR1460032} for the proof. Let $\cL$ ($\cL^\star$ resp.)
stand for the canonical projection of $\tilde\cL$ to $\dual \fG$
($\widehat{\Gint}$ resp.), and let the star map be
$(-)^\star:\cL\to\cL^\star$. Then
\begin{align}
\chi(s)=\cconj{\chi^\star(s^\star)}\qquad(\chi\in\cL,\ s\in L). \label{juji}
\end{align}
Let $\fT$ be a compact Abelian group $(\fG\times\Gint)/\tilde L$.   Then
the lattice $\tilde{\mathcal{L}}$ of the c.-p.\@
scheme satisfies
\begin{align}
\tilde{\cL}\simeq \widehat{\fT}, \label{isomorphic}
\end{align}
and the composite $\varpi:\widehat{\fT}\to \cL$ of the isomorphism and the projection restricted to
$\widetilde \cL$ is indeed a bijection:
\begin{align}
  \varpi(\xi) := \xi\left( (\bullet, 0) +
 \tilde{L}\right) \in \cL. \label{bijectionvarpi}
\end{align}
Recall that $|\tilde L|=\int_\fT d x d y$.

We say an \textsc{flc} set $P\subset \fG$ is \emph{repetitive} if for every
  compact set $K\subset\fG$ there exists compact set $K'\subset \fG$
  such that for all $t_1,t_2\in \fG$ there exists $s\in K'$ such that
  $(t_1+P)\cap K=(s+t_2+P)\cap K$.  According to \cite{MR1798991}, we
  say a window $W\subset\Gint$ has \emph{no nontrivial translation
  invariance} if $\{ c\in \Gint\;;\; c+W=W \}=\{0\}$.

\begin{theorem}\label{thm:2}Assume that $\Lambda(W)$ is a repetitive model
 set over a c.-p.\@ scheme $(\fG, \Gint, \tilde L)$ where
 $\Lambda(W)-\Lambda(W)$ generates $L$, $W=Cl(Int(W))$, $W$ has no
 non-trivial translation invariance, and the Haar measure
 $\theta(\partial W)$ is $0$. Assume
 furthermore that
\begin{align}
b\in\cbp,\  \supp b\subset W,\ \omega=\sum_{s\in\Lambda(W)}
 b(s^\star)\delta_s. \label{thm:2:hosi}\end{align}
Then the diffraction measure of $\omega$ is 
$\displaystyle \dual{\gamma_\omega} =
 \sum_{\chi\in\L}\frac{| \hat{b}(\chi^\star) |^2
 }{|\tilde{L}|^2} \delta_\chi$.
\end{theorem}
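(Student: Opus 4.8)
The plan is to compute the autocorrelation $\gamma_\omega$ explicitly and then recognize its Fourier transform.

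First I would write down $\eta_\omega(g)$ for each $g \in \Lambda(W) - \Lambda(W)$. By definition $\eta_\omega(g) = \lim_{n\to\infty} |D_n|^{-1} \sum_{s} b(s^\star)\overline{b((s-g)^\star)}$, the sum taken over $s \in \Lambda(W) \cap (\Lambda(W)+g) \cap D_n$. Since $(s-g)^\star = s^\star - g^\star$ and the summand is a continuous function of $s^\star$ supported on a window with null-measure boundary, Weyl's theorem (Proposition~\ref{prop:weyl}) applies directly, giving $\eta_\omega(g) = |\tilde L|^{-1} \int_{W \cap (W+g^\star)} b(y)\overline{b(y-g^\star)}\,dy$. (A boundary-effect lemma analogous to Claim~\ref{claim:b} handles the discrepancy between summing over $D_n$ versus $D_n \cap (D_n+g)$, using the van Hove property.) Thus $\gamma_\omega = \sum_{g} \eta_\omega(g)\delta_g$ is a well-defined positive-definite measure.

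The heart of the argument is to identify $\widehat{\gamma_\omega}$. The cleanest route is the torus parametrization set up in the excerpt. Using \eqref{juji}, namely $\chi(s) = \overline{\chi^\star(s^\star)}$ for $\chi \in \cL$, $s \in L$, I expect the diffraction measure to be supported on $\cL = \Pi(\tilde\cL)$, with each atom at $\chi$ carrying weight $|\tilde L|^{-2}|\hat b(\chi^\star)|^2$. To see this, I would follow the Baake--Moody strategy behind Proposition~\ref{prop:modelsetpp}: the map $s^\star \mapsto$ evaluation on the torus $\fT = (\fG\times\Gint)/\tilde L$ conjugates the Dirac comb to a function on $\fT$, and the Fourier coefficients of $\gamma_\omega$ along the dual lattice $\widehat{\fT} \simeq \tilde\cL$ become the autocorrelation coefficients. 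Concretely, one computes $\widehat{\gamma_\omega}(\{\chi\})$ as the Bombieri--Taylor--type limit $\lim_n |D_n|^{-1}|\sum_{s \in \Lambda(W)\cap D_n} b(s^\star)\overline{\chi(s)}|^2$; substituting $\overline{\chi(s)} = \chi^\star(s^\star)$ and applying Weyl's theorem to $b(s^\star)\chi^\star(s^\star)$ yields $\left||\tilde L|^{-1}\int_W b(y)\chi^\star(y)\,dy\right|^2 = |\tilde L|^{-2}|\hat b(\chi^\star)|^2$, where I use that $\hat b(\chi^\star) = \int_W b(y)\overline{\overline{\chi^\star(y)}}\,dy = \int_W b(y)\chi^\star(y)\,dy$ once the conjugation conventions are tracked carefully.

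The main obstacle, and the place demanding genuine care, is justifying that $\widehat{\gamma_\omega}$ is \emph{purely} atomic and concentrated exactly on $\cL$, with no continuous part and no extra atoms. This is where the hypotheses $W = Cl(Int(W))$, the no-nontrivial-translation-invariance of $W$, repetitivity, and the generation of $L$ by $\Lambda(W)-\Lambda(W)$ all enter: they guarantee (via the dynamical system / torus parametrization of \cite{MR1798991,MR2308136}) that the hull is uniquely ergodic with pure-point dynamical spectrum equal to $\cL$, so that the Bombieri--Taylor limit vanishes off $\cL$ and the diffraction is the pure-point measure assembled from the computed coefficients. I would therefore invoke the pure-pointness (Proposition~\ref{prop:modelsetpp} guarantees $\widehat{\gamma_\omega}$ is pure-point and translationally bounded for such $b$), reducing the task to summing the atomic weights, and then verify via the Plancherel identity of Proposition~\ref{prop:al} that $\sum_{\chi\in\cL} |\tilde L|^{-2}|\hat b(\chi^\star)|^2\,\delta_\chi$ reproduces $\gamma_\omega$, closing the loop. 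The bookkeeping of the two star maps and the complex conjugations in \eqref{juji} is the likeliest source of sign or conjugation slips, so I would fix conventions at the outset and check them against the real case $b \geq 0$.
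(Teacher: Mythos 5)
Your plan diverges from the paper's proof at exactly the point where the real work lies, and that point is left as an assertion. You propose to obtain pure\-/pointness from Proposition~\ref{prop:modelsetpp} and then to read off the atoms as the ``Bombieri--Taylor--type limit'' $\lim_n|D_n|^{-2}\bigl|\sum_{s\in\Lambda(W)\cap D_n}b(s^\star)\cconj{\chi(s)}\bigr|^2$ (your normalization $|D_n|^{-1}$ is a slip; as written the limit diverges at a Bragg position), with the limit vanishing off $\cL$ ``because the dynamical spectrum is $\cL$.'' But neither the identification of the point masses of $\dual{\gamma_\omega}$ with averaged Fourier--Bohr coefficients, nor the transfer from the dynamical spectrum of the hull to the support and weights of the diffraction (a Dworkin-type argument), is available anywhere in the paper, and neither is a formal consequence of the definitions: these are theorems in their own right (cf.\ Lenz \cite{MR2480747}), and proving them in this setting is precisely what the theorem demands. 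Proposition~\ref{prop:modelsetpp} gives you only that $\dual{\gamma_\omega}$ is pure-point; it says nothing about where the atoms sit or what they weigh. The same objection applies to your closing sentence: ``verify via the Plancherel identity of Proposition~\ref{prop:al} that the assembled measure reproduces $\gamma_\omega$'' is not a finishing check --- written out, it is the identity $\int_\fG(\varphi*\tilde\varphi)\,d\gamma_\omega=\sum_{\chi\in\cL}|\hat\varphi(-\chi)|^2\,|\hat b(\chi^\star)|^2/|\tilde L|^2$ for every $\varphi\in C_c(\fG)$, and establishing that identity \emph{is} the entire proof.

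For contrast, the paper proves exactly that identity, with pure-pointness emerging as a conclusion rather than entering as an input (the paper even notes after Theorem~\ref{thm:main} that Theorem~\ref{thm:2} lets one dispense with Proposition~\ref{prop:modelsetpp}): Lemma~\ref{lem:abu} (Fact~\ref{fact:1.2} plus the pointwise ergodic theorem on the uniquely ergodic hull) converts the left-hand side into $\inner{\Phi}{\Phi}_\nu$ for a $\Phi\in L^2(X_\Lambda,\nu)$ built from $\varphi$ and $\omega$; the torus parametrization (Proposition~\ref{prop:beta}, Lemma~\ref{proposition:iota}) and Lemma~\ref{thm:a} (via Lemma~\ref{aki2} and Weyl's theorem) compute the Fourier coefficients of $\iota\Phi$ over $\dual\fT\simeq\tilde\cL$ to be $\hat\varphi(-\chi)\hat b(\chi^\star)/|\tilde L|$; Parseval on the compact group $\fT$ together with the uniqueness clause of Proposition~\ref{prop:al} then forces $\dual{\gamma_\omega}$ to be the stated atomic measure. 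Your Weyl computation of $\eta_\omega(g)$ is correct but plays no role in that chain. Finally, the conjugation issue you deferred is genuine and cannot be settled ``against the real case $b\ge0$'': under the paper's conventions ($\hat f(\eta)=\int f\,\cconj{\eta}$ and $\chi(s)=\cconj{\chi^\star(s^\star)}$ from \eqref{juji}), your substitution gives $\int_W b(y)\chi^\star(y)\,dy=\hat b(-\chi^\star)$, not $\hat b(\chi^\star)$; the two moduli coincide precisely when $b$ is real, which is why a real test case can never detect the discrepancy you are worried about.
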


The theorem with the physical space $\fG$ being $\Rset^n$ was proved by
Hof~\cite{MR1328260}, and that for Dirac combs with 
constant weights was by Schlottmann~\cite{MR1798991}. A general theorem for weighted Dirac comb with the weights arising from an
``admissible'' Radon-Nikod\'ym derivative of the $\tilde L$-invariant
measure was studied in Lenz-Richard~\cite[Theorem~3.3]{MR2289878}. Our
theorem is another form of a weaker version of Lenz-Richard's theorem.

In order to prove the theorem, we employ a uniquely ergodic dynamical system $X_{\Lambda(W)}$
made from $\Lambda(W)$, and connection of the autocorrelation measure $\gamma_\omega$ to a complex Hilbert space over
$X_{\Lambda(W)}$. Then we prove lemmas about the so-called \emph{torus
parametrization} of $\Lambda(W)$, introduced in
\cite{0305-4470-30-9-016} and generalized by \cite{MR1798991}.
%

\begin{proposition}[\protect{\cite{MR1798991}}]
\label{prop:ds}
\begin{enumerate}
\item  For every \textsc{flc} set $\Lambda\subset\fG$,
the closure 
$\XP0$ of the $\fG$-orbit $\{ \Lambda+g\;;\;
g\in\fG\}$ of $\Lambda$ with some uniform topology  is a complete, compact Hausdorff space.

\item \label{assert:uniquelyergodic}
Suppose
  a model set $\Lambda:=\Lambda(W)$ satisfies the same assumptions as
       Theorem~\ref{thm:2}. Then 
$\XP0$ will be a minimal and uniquely ergodic dynamical system, with
the  $\fG$-action  $(x, P)\in \fG\times \XP0\mapsto P+x\in \XP0$. 
\end{enumerate}
Hereafter, 
the uniquely ergodic probability
measure of $\XP0$ will be denoted by $\nu$, and
 the complex Hilbert
 space over $\XP0$ with the inner product 
\begin{align*}\inner{\Phi_1}{\Phi_2}_\nu:=\int_{\XP0}
\Phi_1(P) \cconj{\Phi_2(P)}d\nu(P)\qquad(\Phi_i\in L^2(\XP0,\nu))
\end{align*}
will be denoted by $L^2(\XP0, \nu)$.
\end{proposition}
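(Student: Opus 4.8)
The plan is to prove the two assertions separately, using the finite local complexity (\textsc{flc}) of $\Lambda$ throughout. For assertion (1) I would first make the ``uniform topology'' precise by equipping the family of closed (here, uniformly discrete) subsets of $\fG$ with a uniformity whose basic entourages are indexed by pairs $(K,U)$, $K\subset\fG$ compact and $U$ a neighborhood of $0$: two point sets are $(K,U)$-close when some $U$-translate of one agrees, inside $K$, with some $U$-translate of the other. Verifying that this uniformity is separated yields the Hausdorff property. I would then establish precompactness of the orbit $\{\Lambda+g\}$: since $\Lambda$ is \textsc{flc} it is uniformly discrete (the point $0$ is isolated in the discrete set $\Lambda-\Lambda$, so Lemma~\ref{lem:ud} applies), whence every translate meets a fixed compact $K$ in a set of uniformly bounded cardinality; and closedness plus discreteness of $\Lambda-\Lambda$ make $(\Lambda-\Lambda)\cap K$ finite, so only finitely many patterns of $\Lambda$ fit inside $K$ up to a small shift. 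A diagonal extraction along an exhausting sequence of compacta (available since $\fG$ is $\sigma$-compact) produces a Cauchy subnet of any net of translates. Completeness follows because a Cauchy net of point sets stabilizes on each compact window and thus converges to a well-defined limit set; so the orbit closure $\XP0$ is a closed subspace of a complete compact Hausdorff uniform space and is therefore itself complete, compact and Hausdorff, with the translation action continuous by uniform continuity of translation.

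For assertion (2), minimality and unique ergodicity are handled by different mechanisms. Minimality I would obtain directly from repetitivity. Every $P\in\XP0$ is a limit of translates of $\Lambda$, so each finite patch of $P$ coincides, up to a small shift, with a patch of $\Lambda$; by repetitivity each patch of $\Lambda$ recurs relatively densely, so any $\Lambda+g$ can be matched on arbitrarily large windows by suitable translates of $P$. Hence every orbit is dense and the system is minimal.

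The harder part, and the main obstacle, is unique ergodicity, which I would route through the torus parametrization of the c.-p.\@ scheme. Using the isomorphism $\tilde{\cL}\simeq\widehat{\fT}$ with $\fT=(\fG\times\Gint)/\tilde L$ from \eqref{isomorphic}, I would construct a continuous, $\fG$-equivariant surjection $\beta:\XP0\to\fT$ intertwining the orbit action with translation on $\fT$ by the image of $\fG\times\{0\}$. That image is dense in $\fT$ because $\Pi_{\mathrm{int}}(\tilde L)$ is dense in $\Gint$ (Definition~\ref{def:ms}): any $(a,b)+\tilde L$ is approximated by choosing $\ell\in\tilde L$ with $\Pi_{\mathrm{int}}(\ell)$ near $-b$ and then setting the physical coordinate to $a+\Pi(\ell)$. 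Consequently translation on the compact group $\fT$ is a minimal rotation, hence uniquely ergodic with Haar measure as its only invariant probability measure.

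It remains to transfer unique ergodicity from $\fT$ back to $\XP0$, and this is exactly where the three regularity hypotheses on $W$ are needed: one must show $\beta$ is injective off a Haar-null (equivalently $\nu$-null) set. The torus points whose internal coordinate meets $\partial W$ form a Haar-null set because $\theta(\partial W)=0$; over the remaining regular points the conditions $W=Cl(Int(W))$ and the absence of nontrivial translation invariance of $W$ force each fiber of $\beta$ to be a single point set, since two distinct point sets sharing a torus image and having all patches matching would exhibit a nonzero period of $W$. Once $\beta$ is shown to be a measure-theoretic isomorphism, any $\fG$-invariant probability measure on $\XP0$ pushes forward to an invariant measure on $\fT$, which must be Haar, and pulling back through the almost-everywhere bijection $\beta$ shows this invariant measure is unique. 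This establishes that $\XP0$ is uniquely ergodic and completes the proof.
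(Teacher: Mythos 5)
Your outline is essentially sound, but note first that the paper contains no proof of Proposition~\ref{prop:ds}: it is imported wholesale from Schlottmann \cite{MR1798991}. Measured against that source, your assertion~(1) (the local uniformity, \textsc{flc} giving total boundedness plus completeness, hence compactness of the orbit closure) and your derivation of minimality from repetitivity follow Schlottmann's path. Where you genuinely diverge is unique ergodicity: Schlottmann obtains it from the \emph{uniform} existence of patch frequencies via a Weyl-type uniform distribution argument on the internal space (the same mechanism as Proposition~\ref{prop:weyl}), using only $\theta(\partial W)=0$, and that argument works verbatim for general $\sigma$-compact \textsc{lcag}'s. You instead route through the torus parametrization, closer in spirit to \cite{0305-4470-30-9-016} and \cite{MR2308136}. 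That route can be made to work, and it buys a structural picture (the maximal equicontinuous factor and the a.e.\ bijection) that Schlottmann's frequency argument does not exhibit so directly, but it carries obligations your sketch passes over.

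Two of these deserve emphasis. First, a circularity hazard inside this paper: Proposition~\ref{prop:beta} states that $\XPa$ has \emph{full measure}, which can only mean full $\nu$-measure, and $\nu$ is exactly what Proposition~\ref{prop:ds}(2) is supposed to produce; so you cannot quote it, and must instead prove the Haar-side statements directly --- that $\beta(\XP0\setminus\XPa)$ is $\tau$-null (this is where $\theta(\partial W)=0$ enters, via the sandwich $\Lambda(Int(W)+c)+x\subseteq P\subseteq \Lambda(Cl(W)+c)+x$ for $P$ in a fiber) \emph{and} that $\beta(\XP0\setminus\XPa)$ is disjoint from $\beta(\XPa)$, so that every invariant $\mu$ satisfies $\mu(\XPa)=\tau(\beta(\XPa))=1$. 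Your sketch does gesture at the first half but not the saturation, and saturation is not decorative: a minimal almost 1-1 extension of a uniquely ergodic rotation is \emph{not} uniquely ergodic in general (irregular Toeplitz systems over odometers carry many invariant measures); what rescues the transfer is precisely that the singleton-fiber set is co-null \emph{and} saturated, so that $\mu(A)=\tau\left(\beta(A\cap\XPa)\right)$ determines $\mu$. Second, that last identity needs $\beta(A\cap\XPa)$ to be measurable, i.e.\ that injective Borel maps have Borel images --- standard-Borel technology requiring metrizability/second countability, which the paper's \textsc{lcag} hypotheses do not grant ($\XP0$ need not be metrizable). This is one concrete reason Schlottmann's uniform-frequency argument is the more robust choice at the stated level of generality. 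A minor further quibble: you attribute the singleton fibers jointly to $W=Cl(Int(W))$ and the absence of nontrivial translation invariance, but the latter condition's role is rather to make the torus parametrization itself unambiguous (so that $\beta$ with $\beta(\Lambda)=\tilde L$ is well defined), while singleton fibers over regular points come from the boundary condition together with $W=Cl(Int(W))$.
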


\begin{proposition}[Torus parametrization]\label{prop:beta}
Assume the same assumptions as Theorem~\ref{thm:2}, and 
let $\fT$ be a compact Abelian group $(\fG\times
 \Gint)/\tilde L$ with the Haar probability measure $\tau$,
and let $\fG$ act on $\fT$ by $(x,t)\in
 \fG\times\fT\mapsto t+(x,0)\in \fT$. Then
there are a continuous surjection
\begin{align*}
\beta:\XP0\to\fT
\end{align*} and a full measure subset $\XPa$ of $\XP0$
such that (1) $\beta$ preserves the $\fG$-action, (2)
$\beta(\Lambda)=\tilde L$, and (3)
$\beta':=\beta|_{\XPa}$ is injective with the range $\beta(\XPa)$
 disjoint from $\beta(\XP0\setminus\XPa)$.
\end{proposition}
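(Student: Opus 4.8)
The plan is to build $\beta$ as the unique continuous, $\fG$-equivariant extension of the \emph{address map} on the dense orbit, and then to read off the fibre structure from the window boundary. First I would define $\tilde\beta:\fG\to\fT$ by $\tilde\beta(x)=(x,0)+\tilde L$; this is a continuous homomorphism, and its image is dense in $\fT$ because $\fG\times\{0\}+\tilde L$ is dense in $\fG\times\Gint$ (equivalently, $L^\star=\Pi_\rint(\tilde L)$ is dense in $\Gint$, by Definition~\ref{def:ms}). On the dense orbit $\{\Lambda+x\;;\;x\in\fG\}\subset\XP0$ I then set $\beta(\Lambda+x):=\tilde\beta(x)$. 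Assertion~(1) and $\beta(\Lambda)=\tilde\beta(0)=\tilde L$ (assertion~(2)) are built into this definition via the homomorphism property, and surjectivity follows once $\beta$ is continuous, since $\beta(\XP0)$ is then compact and contains the dense set $\tilde\beta(\fG)$.

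The crux is that $\Lambda+x\mapsto\tilde\beta(x)$ is uniformly continuous for the local topology of $\XP0$, so that it extends to all of $\XP0$. Concretely I would show: for every neighbourhood $U$ of $0$ in $\fT$ there is a compact $K\subset\fG$ such that whenever $\Lambda+x$ and $\Lambda+x'$ agree on $K$ (up to a small translation, whose contribution is absorbed by continuity of $\tilde\beta$) one has $\tilde\beta(x)-\tilde\beta(x')\in U$. Writing $z=x-x'$, exact agreement of $\Lambda+z$ with $\Lambda$ on a large $K$ forces $z\in L$ (by relative density pick a common point $s_0$ of both patterns; then $s_0$ and $s_0-z$ lie in $\Lambda\subset L$, so $z\in L$) and forces $z^\star$ near $0$: since $\Lambda+z=\Lambda(W+z^\star)$, agreement means $s^\star\in W\Leftrightarrow s^\star\in W+z^\star$ for all $s\in L$ in a large region. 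If $z^\star$ stayed bounded away from $0$, then $W=Cl(Int(W))$ with no nontrivial translation invariance would make $W\bigtriangleup(W+z^\star)$ have positive $\theta$-measure, and Weyl equidistribution of $L^\star$ (Proposition~\ref{prop:weyl}, using $\theta(\partial W)=0$) would produce such an $s$ inside the region, a contradiction. Hence $\tilde\beta(z)=(z,0)+\tilde L=(0,-z^\star)+\tilde L\in U$. This is the step I expect to be the main obstacle.

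Next I would introduce the singular set $T_{\mathrm{sing}}:=\{(x,y)+\tilde L\in\fT\;;\;(L^\star+y)\cap\partial W\neq\emptyset\}$, which is well defined on $\fT$ since translating $y$ by an element of $L^\star$ leaves $L^\star+y$ unchanged. Its full preimage in $\fG\times\Gint$ is $\fG\times(\partial W-L^\star)$, and $\partial W-L^\star=\bigcup_{\ell\in L}(\partial W-\ell^\star)$ is $\theta$-null because $\theta(\partial W)=0$ and $L$ is countable; as $\tau$ is the image of the Haar measure of $\fG\times\Gint$ under the quotient map, $\tau(T_{\mathrm{sing}})=0$. Setting $\XPa:=\beta^{-1}(\fT\setminus T_{\mathrm{sing}})$, assertion~(3) is automatic, because $\XPa$ and $\XP0\setminus\XPa$ are the $\beta$-preimages of the disjoint sets $\fT\setminus T_{\mathrm{sing}}$ and $T_{\mathrm{sing}}$. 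Full measure of $\XPa$ follows from $\beta_*\nu=\tau$: the rotation action of the dense subgroup $\tilde\beta(\fG)$ on the compact group $\fT$ is uniquely ergodic with invariant measure $\tau$, while $\beta_*\nu$ is a $\fG$-invariant probability measure on $\fT$, so the two coincide; hence $\nu(\XP0\setminus\XPa)=\tau(T_{\mathrm{sing}})=0$.

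Finally, for injectivity of $\beta'=\beta|_{\XPa}$ I would show that each nonsingular fibre is a singleton. Given $P\in\beta^{-1}(t)$ with $t=(x,y)+\tilde L\notin T_{\mathrm{sing}}$, write $P=\lim_n(\Lambda+x_n)$ with $\tilde\beta(x_n)\to t$, and choose $\ell_n\in L$ with $x_n-\ell_n\to x$ and $\ell_n^\star\to -y$, so that $\Lambda+x_n=(x_n-\ell_n)+\Lambda(W+\ell_n^\star)$. Since $t\notin T_{\mathrm{sing}}$ means no $s\in L$ satisfies $s^\star+y\in\partial W$, each membership test $s^\star\in W+\ell_n^\star$ stabilises as $\ell_n^\star\to-y$, and the patterns converge to the single limit $x+\{s\in L\;;\;s^\star+y\in W\}$. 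Thus $\beta^{-1}(t)$ consists of exactly this one pattern, so $\beta'$ is injective, completing the construction. Here repetitivity and the hypothesis that $\Lambda(W)-\Lambda(W)$ generates $L$ enter only through Proposition~\ref{prop:ds}, to guarantee that $\XP0$ is minimal and that $\fT$ is the correct target group for $\beta$.
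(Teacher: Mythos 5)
First, a point of reference: the paper never proves Proposition~\ref{prop:beta}. It is imported from the literature (the torus parametrization of \cite{0305-4470-30-9-016}, generalized to this setting by Schlottmann \cite{MR1798991}), and the paper only uses it, via Lemma~\ref{proposition:iota}, to establish Lemma~\ref{lem:abu} and Lemma~\ref{thm:a}. So the comparison is with the standard construction in that literature, and your route is exactly that standard one: extend the address map $\Lambda+x\mapsto(x,0)+\tilde L$ from the dense orbit by uniform continuity, excise the singular torus points $(x,y)+\tilde L$ with $(L^\star+y)\cap\partial W\neq\emptyset$, identify $\beta_*\nu=\tau$ through unique ergodicity of the dense rotation on $\fT$, and show nonsingular fibres are singletons. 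The measure-theoretic parts are correct as written: well-definedness and $\tau$-nullity of $T_{\mathrm{sing}}$ (countability of $L$ plus $\theta(\partial W)=0$), the identification $\beta_*\nu=\tau$, and the automatic disjointness of $\beta(\XPa)$ from $\beta(\XP0\setminus\XPa)$ obtained by taking $\XPa$ to be a full $\beta$-preimage.

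The step you flagged as the main obstacle is, however, not yet a proof as written, and the defect is one of quantifier order (it occurs twice). (a) Uniform continuity requires \emph{one} compact $K$ (per neighbourhood $V$ of $0$ in $\Gint$) that works for \emph{all} $z$ simultaneously, whereas Proposition~\ref{prop:weyl} applied to the window $W\bigtriangleup(W+z^\star)$ yields only, for each \emph{fixed} $z$, a sufficiently large region containing a witness: the van Hove index at which the Weyl average becomes positive depends on the window, hence on $z$; moreover you need a lower bound on $\theta\bigl(W\bigtriangleup(W+c)\bigr)$ uniform over $c\notin Int(V)$, not just positivity for each $c$. The repair is a compactness argument: agreement on any $K$ meeting $\Lambda$ already forces $z\in L$ and $z^\star\in W-W$, a relatively compact set; arguing by contradiction along an exhausting sequence $K_n\nearrow\fG$, extract a convergent subnet $z_n^\star\to c_0$ in the compact set $Cl(W-W)\setminus Int(V)$, and use density of $L^\star$ to pick a \emph{single} $s_0\in L$ with $s_0^\star$ in the open set $\bigl(Int(W)\setminus(W+c_0)\bigr)\cup\bigl(Int(W+c_0)\setminus W\bigr)$, which is nonempty because its $\theta$-measure equals $\theta\bigl(W\bigtriangleup(W+c_0)\bigr)>0$ (this is where $W=Cl(Int(W))$, the absence of nontrivial invariance, and $\theta(\partial W)=0$ enter). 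Openness makes the discrepancy at $s_0$ stable under $z_n^\star\to c_0$, while $s_0\in K_n$ eventually; contradiction. Note the repaired step needs only density of $L^\star$, not Weyl equidistribution. (b) The same issue recurs in your fibre argument: ``each membership test stabilises'' is pointwise in $s$, but $L\cap K$ is in general infinite ($L$ is typically dense in $\fG$), so pointwise stabilization does not by itself give $\Lambda(W+\ell_n^\star)\cap K=\Lambda(W-y)\cap K$ for large $n$. You must add that all windows involved lie in one fixed relatively compact subset of $\Gint$, so only finitely many $s\in L\cap K$ ever pass any membership test; the maximum of their finitely many stabilization indices then yields exact agreement on each compact set, hence convergence in $\XP0$. (A last cosmetic point: since $\fG$ need not be metrizable, your sequences should strictly be nets.)
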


From the proposition, we can derive a following:
\begin{lemma}\label{proposition:iota}
 Let $L^2(\fT, \tau)$ be a complex Hilbert space with the inner product 
\begin{align*}
\inner{\alpha_1}{\alpha_2}_\tau:=\int_\fT \alpha_1(t)
 \cconj{\alpha_2(t)}d\tau(t)\qquad\left(\alpha_1, \alpha_2\in L^2(\fT, \tau)\right).
\end{align*} 
 Then we have a bijective isometry
\begin{align*}
\iota\ :\ L^2(\XP0,\, \nu)\to L^2(\fT, \tau)\ \ ;\ \ \Theta\mapsto 
\Theta\circ (\beta')^{-1} \enspace.
\end{align*} 

\end{lemma}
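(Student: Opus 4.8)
The plan is to show that $\iota$ is a well-defined linear isometry which is moreover surjective; injectivity then comes for free from the isometry property. Everything reduces to a single measure-theoretic fact that Proposition~\ref{prop:beta} is engineered to deliver, namely that the torus parametrization $\beta$ transports $\nu$ to $\tau$ and that $\beta'$ is, off a null set, an isomorphism of measure spaces. Once that is established, $\iota$ is simply the composition operator induced by a measure isomorphism, and the isometry identity is a change-of-variables computation.

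First I would establish that $\beta$ transports $\nu$ to $\tau$, i.e. $\beta_\ast\nu=\tau$. The $\fG$-action on $\fT=(\fG\times\Gint)/\tilde L$ carrying the class of $(a,b)$ to that of $(a+x,b)$ is minimal: the orbit of the identity coset is $\{(x,0)+\tilde L\;;\;x\in\fG\}$, and given any class $(a,b)+\tilde L$ one selects $(s,s^\star)\in\tilde L$ with $s^\star$ near $b$ (possible since $\Pi_{\mathrm{int}}(\tilde L)=L^\star$ is dense in $\Gint$ by Definition~\ref{def:ms}) and then sets $x=a-s$, so that $(x,0)+(s,s^\star)=(a,s^\star)$ is close to $(a,b)$. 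A minimal translation action on a compact Abelian group is uniquely ergodic, with the Haar measure $\tau$ as its only invariant probability measure. Since $\XP0$ is uniquely ergodic (Proposition~\ref{prop:ds}), its measure $\nu$ is $\fG$-invariant, and $\beta$ intertwines the two $\fG$-actions by property (1) of Proposition~\ref{prop:beta}; hence $\beta_\ast\nu$ is an $\fG$-invariant Borel probability measure on $\fT$ and must coincide with $\tau$.

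Next I would pin down the null sets. Property (3) of Proposition~\ref{prop:beta} says $\beta(\XPa)$ is disjoint from $\beta(\XP0\setminus\XPa)$; this forces $\beta^{-1}(\beta(\XPa))=\XPa$, because any $P\notin\XPa$ would have $\beta(P)\in\beta(\XP0\setminus\XPa)$, contradicting membership in $\beta(\XPa)$. Consequently $\beta(\XPa)$ is $\tau$-measurable with $\tau(\beta(\XPa))=\beta_\ast\nu(\beta(\XPa))=\nu(\XPa)=1$, so $\beta'=\beta|_{\XPa}$ is a bijection from the full-measure set $\XPa$ onto the full-measure set $\beta(\XPa)$ with $\beta'_\ast(\nu|_{\XPa})=\tau$. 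For $\Theta\in L^2(\XP0,\nu)$ the function $\Theta\circ(\beta')^{-1}$ is then a well-defined element of $L^2(\fT,\tau)$ (set it to $0$ on the $\tau$-null complement), and the substitution $t=\beta'(P)$ gives
\begin{align*}
\|\iota(\Theta)\|_\tau^2
=\int_{\beta(\XPa)}\bigl|\Theta\bigl((\beta')^{-1}(t)\bigr)\bigr|^2\,d\tau(t)
=\int_{\XPa}|\Theta(P)|^2\,d\nu(P)
=\|\Theta\|_\nu^2,
\end{align*}
so $\iota$ is a linear isometry, in particular injective. Surjectivity is then immediate: for $\alpha\in L^2(\fT,\tau)$ put $\Theta:=\alpha\circ\beta$, which lies in $L^2(\XP0,\nu)$ with $\|\Theta\|_\nu=\|\alpha\|_\tau$ by the same change of variables, and which satisfies $\iota(\Theta)=\alpha\circ\beta\circ(\beta')^{-1}=\alpha$ in $L^2(\fT,\tau)$.

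I expect the main obstacle to be the measurability bookkeeping around the failure of $\beta$ to be injective on the null set $\XP0\setminus\XPa$: $\beta$ is only a continuous surjection, so images of Borel sets need not be Borel, and one must lean on the disjointness clause (3) to guarantee $\beta^{-1}(\beta(\XPa))=\XPa$ and hence the measurability and full measure of $\beta(\XPa)$. Establishing $\beta_\ast\nu=\tau$ is conceptually the crux, but it reduces cleanly to unique ergodicity of the torus; the delicate part is verifying that $\beta'$ is an honest measure isomorphism off a $\nu$-null (equivalently $\tau$-null) set, so that the composition operator is genuinely bijective rather than merely isometric onto a closed subspace.
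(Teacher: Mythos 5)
Your proposal is correct and follows the route the paper intends: the paper states this lemma without proof, merely asserting that it can be derived from Proposition~\ref{prop:beta}, and your argument is precisely that derivation. In particular, your use of unique ergodicity of the translation action on $\fT$ to obtain $\beta_\ast\nu=\tau$ (a fact Proposition~\ref{prop:beta} does not state explicitly but which the lemma needs) and your null-set bookkeeping via the disjointness clause (3) supply exactly the details the paper leaves implicit.
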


By using \cite[Proposition~1.4]{MR0621876}, we can prove the following:
\begin{fact}\label{fact:1.2} 
If $\lambda$ and $\mu$ are translationally bounded, nonnegative measures on $\fG$ and
 $\{D_n\}_n$ is a van Hove sequence on $\fG$, then    in the vague topology
\begin{align*}
\lim_{n\to\infty}\frac{\widetilde{\left(\lambda|_{D_n}\right)} * \mu|_{D_n} - \widetilde{\lambda} * (\mu|_{D_n})}{|D_n|}=\lim_{n\to\infty}\frac{\widetilde{\left(\lambda|_{D_n}\right)} *\mu - \widetilde{\lambda} * (\mu|_{D_n})}{|D_n|}=0.
\end{align*}
\end{fact}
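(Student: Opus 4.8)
The plan is to reduce both asserted limits to one kind of boundary estimate. By bilinearity of the convolution, the first difference telescopes as
\[
\widetilde{(\lambda|_{D_n})} * \mu|_{D_n} - \widetilde{\lambda} * (\mu|_{D_n}) = \bigl(\widetilde{(\lambda|_{D_n})} - \widetilde{\lambda}\bigr) * (\mu|_{D_n}) = -\,\widetilde{(\lambda|_{\fG\setminus D_n})} * (\mu|_{D_n}),
\]
while the gap between the two expressions whose limits are claimed equal is
\[
\widetilde{(\lambda|_{D_n})} * \mu|_{D_n} - \widetilde{(\lambda|_{D_n})} * \mu = -\,\widetilde{(\lambda|_{D_n})} * (\mu|_{\fG\setminus D_n}).
\]
Thus it suffices to show that each of these two boundary convolutions, after division by $|D_n|$, tends vaguely to $0$; the vanishing of the second expression then follows by subtraction. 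Every convolution here is a well-defined measure because each product contains a compactly supported finite factor, and since $\lambda,\mu\ge 0$ the operation $\widetilde{(\cdot)}$ is simply the reflection carrying a measure supported on $A$ to one supported on $-A$.

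The two boundary terms are handled symmetrically, so I describe the first. I would fix $\varphi\in C_c(\fG)$ with $\supp\varphi\subseteq K$, $K$ compact, and write out the pairing using that $\widetilde{(\cdot)}$ is the reflection:
\[
\Bigl|\bigl(\widetilde{(\lambda|_{\fG\setminus D_n})} * (\mu|_{D_n})\bigr)(\varphi)\Bigr| = \Bigl| \int_{\fG\setminus D_n} \int_{D_n} \varphi(b - x)\, d\mu(b)\, d\lambda(x) \Bigr|.
\]
For fixed $x$ the inner integrand is supported on $b\in(x+K)\cap D_n$, so the inner integral is at most $\|\varphi\|_\infty\,\mu(x+K)\le\|\varphi\|_\infty\,c_K$ uniformly in $x$, the uniform bound $c_K$ coming from translational boundedness of $\mu$ (cover $K$ by finitely many translates of a fixed compact neighbourhood of $0$). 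Moreover, as a function of $x$ the inner integral vanishes unless $(x+K)\cap D_n\ne\emptyset$, which together with $x\notin D_n$ forces $x\in(D_n-K)\setminus D_n\subseteq\partial^{-K}(D_n)$. Hence the whole pairing is at most $\|\varphi\|_\infty\,c_K\,\lambda\bigl(\partial^{-K}(D_n)\bigr)$. The cross term admits the same bound with the roles of $\lambda$ and $\mu$ interchanged, namely $\|\varphi\|_\infty\,c'_K\,\mu\bigl(\partial^{K}(D_n)\bigr)$.

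It remains to divide by $|D_n|$ and let $n\to\infty$. Here I would invoke \cite[Proposition~1.4]{MR0621876}: a translationally bounded measure assigns to a thickened boundary $\partial^{K}(D_n)$ a mass bounded by a constant times the Haar measure $|\partial^{K'}(D_n)|$ of a slightly larger thickened boundary, and the van Hove property of $\{D_n\}_n$ makes the latter $o(|D_n|)$. Consequently $\lambda(\partial^{-K}D_n)/|D_n|\to 0$ and $\mu(\partial^{K}D_n)/|D_n|\to 0$, so both boundary terms vanish vaguely and the fact follows. The main obstacle is precisely this last conversion: the crude product bound $\lambda(D_n)\cdot\mu(\partial^{K}D_n)$ is far too lossy, so one must localize carefully so that one factor contributes only a uniform window bound while the other contributes the measure of a genuine boundary shell, and then pass from that measure to Haar measure via translational boundedness before applying the van Hove estimate.
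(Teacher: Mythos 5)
Your proposal is correct and takes essentially the same route as the paper's own proof: both test against a $\varphi\in C_c(\fG)$ with compact support $K$, rewrite the difference as a double integral in which one variable is confined to a thickened boundary shell of $D_n$ while the inner integral is bounded uniformly via translational boundedness, and then dispose of the shell term by converting its measure to Haar measure (the step behind \cite[Proposition~1.4]{MR0621876}) and invoking the van Hove property. The only deviations are cosmetic: you telescope both asserted limits explicitly (the paper writes out only one difference) and you localize $\lambda$ to the shell $\partial^{-K}(D_n)$ while uniformly bounding $\mu$, whereas the paper makes the symmetric choice of integration order and ends with a bound by $\mu(\partial^{K}(D_n))$.
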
 

In the following Lemma,  recall that $\omega$ is a weighted Dirac comb mentioned in
Theorem~\ref{thm:2} and the weight depends on a function $b\in C_c(\Gint)$.
\begin{lemma} \label{lem:abu}Suppose the same assumptions as
Theorem~\ref{thm:2}. Let $\Lambda$ be $\Lambda(W)$. Then, for all
 $\varphi_1,\varphi_2\in C_c(\fG)$, there are \emph{unique} $\Phi_1,\Phi_2\in
 L^2(\XP0, \nu)$ such that for any $x\in\fG$,
\begin{align}
\Phi_i(\Lambda - x) =(\varphi_i * \omega)(x),\label{conv}\\
(\varphi_1 * \widetilde{\varphi_2} * \gamma_\omega)(0) =
 \inner{\Phi_1}{\Phi_2}_\nu.\label{eq10}
\end{align}
\end{lemma}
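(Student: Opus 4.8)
The plan is to establish \eqref{conv} first and then deduce \eqref{eq10} from it. For \eqref{conv}, I would start by noting that for a fixed $\varphi_i \in C_c(\fG)$, the quantity $(\varphi_i * \omega)(x)$ depends only on the pattern $\Lambda$ and the translation amount $x$. Because $\omega = \sum_{s\in\Lambda} b(s^\star)\delta_s$ with $b$ continuous and compactly supported inside $W$, the convolution $(\varphi_i * \omega)(x) = \sum_{s\in\Lambda} b(s^\star)\varphi_i(x-s)$ is a finite sum locally (by the \textsc{flc} property of $\Lambda$ and compact support of $\varphi_i$). The key observation is that the weighted Dirac comb attached to any pattern $P\in\XP0$ can be reconstructed from $P$ together with the torus parametrization $\beta$ of Proposition~\ref{prop:beta}: for $P=\Lambda-x$ in the dense orbit, the star-values $s^\star$ needed to evaluate the weights are recovered from $\beta(P)=\beta(\Lambda)-(x,0)=\tilde L-(x,0)$. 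I would therefore define $\Phi_i$ on the orbit by $\Phi_i(\Lambda-x):=(\varphi_i*\omega)(x)$ and then argue that this extends continuously to all of $\XP0$.

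The main technical point is to show that $\Phi_i$ is well-defined and square-integrable on $\XP0$, and that the defining relation \eqref{conv} holds for every $x$ (not just generically). Here I would use that $\beta'=\beta|_{\XPa}$ is injective on the full-measure set $\XPa$, so that the weighted comb on a pattern $P\in\XPa$ is determined by $\beta(P)\in\fT$; composing with the isometry $\iota$ of Lemma~\ref{proposition:iota} lets me view $\Phi_i$ as an $L^2(\fT,\tau)$ function pulled back to $\XP0$. Continuity of $b$ and of $\varphi_i$, together with the continuity of $\beta$ and the \textsc{flc}/repetitivity of $\Lambda$, give the boundedness needed for membership in $L^2(\XP0,\nu)$. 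Uniqueness of $\Phi_i$ follows because two elements of $L^2(\XP0,\nu)$ agreeing on the dense $\fG$-orbit $\{\Lambda-x\ssep x\in\fG\}$ must coincide $\nu$-almost everywhere.

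For the identity \eqref{eq10}, I would expand the left-hand side using the definition of the autocorrelation measure $\gamma_\omega$ as the vague limit of $\gamma_\omega^{(n)}=\omega_n*\widetilde{\omega_n}/|D_n|$. Writing out $(\varphi_1*\widetilde{\varphi_2}*\gamma_\omega)(0)$ and using Fact~\ref{fact:1.2} to replace the truncated convolutions $\omega_n*\widetilde{\omega_n}$ by their van-Hove averages, the expression becomes a Birkhoff-type average over $D_n$ of the product $\Phi_1(\Lambda-x)\cconj{\Phi_2(\Lambda-x)}$. By unique ergodicity of $\XP0$ (Proposition~\ref{prop:ds}\eqref{assert:uniquelyergodic}), this spatial average converges to the ergodic integral $\int_{\XP0}\Phi_1(P)\cconj{\Phi_2(P)}\,d\nu(P)=\inner{\Phi_1}{\Phi_2}_\nu$, which is exactly the right-hand side.

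The hard part will be the careful justification that the orbit-defined function $\Phi_i$ is genuinely a well-defined element of $L^2(\XP0,\nu)$ and that evaluating it along the orbit commutes with the limit defining $\gamma_\omega$; this is where the interplay between the continuity of the torus parametrization $\beta$, the almost-everywhere injectivity of $\beta'$, and the uniform convergence guaranteed by unique ergodicity all have to be combined. The passage from the vague limit defining $\gamma_\omega$ to a pointwise ergodic average at the single point $0$ via Fact~\ref{fact:1.2} is the step most likely to require delicate handling of the van Hove boundary terms.
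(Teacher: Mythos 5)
Your overall architecture matches the paper's — recover the weights from the torus coordinate $\beta(P)$, and prove \eqref{eq10} by combining Fact~\ref{fact:1.2} with an ergodic average along the orbit — and your treatment of \eqref{eq10} is essentially the paper's own. The genuine gap is in how you produce $\Phi_i$. You define it on the orbit and then propose to realize it ``as an $L^2(\fT,\tau)$ function pulled back to $\XP0$'' via the a.e.-injectivity of $\beta'=\beta|_{\XPa}$ and the isometry $\iota$ of Lemma~\ref{proposition:iota}. This mechanism cannot work as stated. First, $\iota$ is a map defined on $L^2(\XP0,\nu)$, so invoking it presupposes exactly what you need to prove, namely $\Phi_i\in L^2(\XP0,\nu)$; and any pullback through $(\beta')^{-1}$ only yields an equivalence class defined off a $\nu$-null set. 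Second, and decisively, \eqref{conv} is demanded for \emph{every} $x\in\fG$, and \eqref{eq10} is an average along the \emph{single} orbit $\{\Lambda-x\;;\;x\in\fG\}$; orbit points need not lie in $\XPa$ (the theorem only assumes $\theta(\partial W)=0$, not $\partial W\cap L^\star=\emptyset$), and the orbit in general carries no $\nu$-mass. An a.e.-defined $\Phi_i$ gives no control there: unique ergodicity guarantees convergence of Birkhoff averages at every point only for \emph{continuous} observables, so with a mere $L^2$ class both the pointwise identity \eqref{conv} and the evaluation of the average along this particular orbit in \eqref{eq10} break down. Your uniqueness argument has the same defect: two $L^2$ classes agreeing on a dense but $\nu$-null orbit need not coincide a.e.; uniqueness is meaningful among continuous functions, where minimality forces equality everywhere.

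The repair — which is the paper's route, and which your phrase ``the weighted Dirac comb attached to any pattern $P$ can be reconstructed from $\beta(P)$'' already points to — is to build $\Phi_i$ pointwise and continuously on all of $\XP0$, bypassing $\beta'$ and $\iota$ entirely. Set $g_i(u,v):=\varphi_i(-u)b(v)\in \cb{\fG\times\Gint}$ and define $f_i:\fT\to\Cset$ by $f_i(t):=\sum_{(u,v)\in t} g_i(u,v)$, a finite sum because each coset $t$ is a translate of the lattice $\tilde L$ and $g_i$ has compact support. One then checks that $f_i$ is continuous (uniform continuity of $g_i$ plus the fact that only boundedly many lattice points meet a compact neighborhood of $\supp g_i$), and puts $\Phi_i:=f_i\circ\beta$ with the full, everywhere-defined continuous $\beta$ of Proposition~\ref{prop:beta}. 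Continuity on the compact space $\XP0$ gives $\Phi_i\in L^2(\XP0,\nu)$, the identity \eqref{conv} holds at every $x$ because $\beta(\Lambda-x)=\tilde L-(x,0)$ and $\supp b\subset W$ lets the lattice sum be restricted to $\Lambda$, and continuity of $\Phi_1\cconj{\Phi_2}$ is precisely what legitimizes the unique-ergodicity step in your proof of \eqref{eq10}.
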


\begin{proof}Let $b\in \cbp$ be as in Theorem~\ref{thm:2} and
 $\beta:\XP0\to\fT$ be as in Proposition~\ref{prop:beta}. Define
 $g_i\in \cb{\fG\times\Gint}$ by $g_i(x,y):= \varphi_i(-x)b(y)$. Put
 $K_i:= \supp g_i$. Let $P\in \XP0$. Because $\beta(P)\in X_\Lambda$ is a
 discrete set, $\beta(P)\cap K_i$ is finite. Thus
 $\Phi_i(P):=\sum_{(x,y)\in \beta(P)} g_i(x,y)$ is indeed a finite sum
 and well-defined.

  To establish $\Phi_i\in L^2(\XP0, \nu)$,
it is sufficient
 to verify that  a function $f_i:\fT\to\Cset\;;\;t\mapsto \sum_{(x,y)\in t}
g_i(x,y)$ is continuous, because $\Phi_i$ is the composition of the continuous function  $\beta$
 and $f_i$.

Because $g_i\in \cb{\fG\times\Gint}$, for all
 $\varepsilon>0$ there exists a compact neighborhood $U\subset
 \fG\times\Gint$ of $0$ such that for  all $z,z'\in \fG\times \Gint$ with
 $z'\in z+ U$, we have $|g_i(z')-g_i(z)|<\varepsilon$. Then $\{z + \tilde
 L\;;\;z\in U\}$ is a compact neighborhood of 0 in $\fT$. Let $t',t\in
 \fT = (\fG\times \Gint)/\tilde{L}$ such that $t'-t$ belongs to the compact neighborhood $\{z + \tilde
 L\;;\;z\in U\}$. Then $t'=t+(u,v)$ for some $(u,v)\in U$. So
 $|f_i(t')-f_i(t)|\le \sum_{z\in t}\left| g_i\left(z+(u,v)\right) -
 g_i(z)\right|$. We show it is less than $\#\left(((K_i-U)\cup K_i)\cap
 t\right)\times \varepsilon$.  If $z\in t$ contributes to the summand,
 then $z+(u,v)\in K_i$ or $z\in K_i$, so $z\in \left((K_i-U)\cup
 K_i\right)\cap t$. Here $\left((K_i-U)\cup K_i\right)\cap t$ is finite
 since $t$ is a translation of the lattice $\tilde L$ and $(K_i-U)\cup
 K_i$ is compact. Thus $f_i$ is continuous.

\medskip
To prove \eqref{conv}, observe 
\begin{align}
\Phi_i(\Lambda - x)=\sum_{s\in
 L}\varphi_i(-s+x) b(s^\star)\label{eq:sum}
\end{align} follows from $\beta(\Lambda-x)=\tilde
  L-(x,0)$. By the condition $\supp b\subset W$ of \eqref{thm:2:hosi},
 the weight $b(s^\star)$ vanishes for any $s\not\in \Lambda(W)$. So the range
 $L$ of $s$ in the summation \eqref{eq:sum} can be replaced with $\Lambda$. Thus
 \eqref{conv} holds.

\medskip
The left-hand side of \eqref{eq10} is
$
\lim_n { \left(\varphi_1 * \widetilde{\varphi_2} *
\omega|_{D_n} * \widetilde{\omega|_{D_n}}\right)(0)}\cdot |D_n|^{-1}
$ by  definition of $\gamma_\omega$.
 But by Fact~\ref{fact:1.2}, it is 
$\lim_n \Bigl( \left(\widetilde{\varphi_2 * \omega}\right) *
(\varphi_1 * \omega)|_{D_n}\Bigr)(0)\cdot |D_n|^{-1}$, which is equal to
$\lim_n  \int_{D_n}\cconj{\Phi_2(\Lambda - x)}\Phi_1(\Lambda - x)
d x\cdot |D_n|^{-1}$ by \eqref{conv}.
 By the pointwise ergodic theorem~\cite{MR2186253}, it is
$ \int_{\XP0} \cconj{\Phi_2(P)}\Phi_1(P)
 d\nu(P)=\inner{\Phi_1}{\Phi_2}_\nu\enspace
$.
\qed\end{proof}

Here is a technical lemma concerning 
van Hove sequences and uniformly discrete sets.

\begin{lemma}\label{aki2} For any uniformly discrete subset $\Lambda$ of $\fG$,
 any bounded complex sequence $\{w_s\}_{s\in\Lambda}$, any $\chi\in\dual
 \fG$,  any
 $\varphi\in C_c(\fG)$, and any van Hove sequence $\{D_n\}_{n}$ of $\fG$,
\begin{align*}
\frac{1}{|D_n|} \left|\sum_{s\in \Lambda} w_s 
 \int_{D_n} \chi(x)  \varphi(x - s) d  x-  \sum_{s\in \Lambda\cap D_n} w_s
 \int_{\fG} \chi(x)  \varphi(x - s) d  x\right|
\end{align*}
tends to $0$, as $n$ goes to $\infty$.
\end{lemma}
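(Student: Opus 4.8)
The plan is to abbreviate the two $x$-integrals and reduce the claim to a boundary estimate. Write $K:=\supp\varphi$, a compact set, put $M:=\sup_{s\in\Lambda}|w_s|<\infty$ and $C_\varphi:=\int_\fG|\varphi(x)|d x$, and for each $s$ set $F_n(s):=\int_{D_n}\chi(x)\varphi(x-s)d x$ and $F(s):=\int_{\fG}\chi(x)\varphi(x-s)d x$. The quantity to be estimated is $|D_n|^{-1}|\sum_{s\in\Lambda}w_sF_n(s)-\sum_{s\in\Lambda\cap D_n}w_sF(s)|$. First I would split the first sum according to whether $s\in D_n$, obtaining the two pieces $\sum_{s\in\Lambda\cap D_n}w_s\bigl(F_n(s)-F(s)\bigr)$ and $\sum_{s\in\Lambda\setminus D_n}w_sF_n(s)$. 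Each summand is bounded in modulus by $MC_\varphi$, since $|F_n(s)|\le C_\varphi$ and $|F_n(s)-F(s)|=|\int_{\fG\setminus D_n}\chi(x)\varphi(x-s)d x|\le C_\varphi$.

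The point is that both summands vanish unless $s$ lies in a thick boundary of $D_n$. Indeed, since $\varphi$ is supported on $K$, the factor $F_n(s)-F(s)$ is nonzero only when $(s+K)\cap(\fG\setminus D_n)\ne\emptyset$, and for $s\in\Lambda\cap D_n$ this forces $s\in(Cl(\fG\setminus D_n)-K)\cap D_n$; likewise $F_n(s)$ with $s\notin D_n$ is nonzero only when $(s+K)\cap D_n\ne\emptyset$, forcing $s\in(D_n+K)\setminus Int(D_n)$. Setting $\tilde K:=K\cup(-K)\cup\{0\}$, both conditions place the relevant $s$ in $\partial^{\tilde K}(D_n)$ as defined in \eqref{thick boundary}. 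Hence the whole expression is at most $2MC_\varphi\,|D_n|^{-1}\,\#\bigl(\Lambda\cap\partial^{\tilde K}(D_n)\bigr)$, and it remains to show this count is $o(|D_n|)$.

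For the counting, I would invoke Lemma~\ref{lem:ud}: there is a symmetric compact neighborhood $U=-U$ of $0$ with the translates $s+U$ ($s\in\Lambda$) pairwise disjoint, so for any relatively compact $B$ the disjoint copies lie in $B+U$, giving $\#(\Lambda\cap B)\le|B+U|/|U|$. Applying this with $B=\partial^{\tilde K}(D_n)$ reduces everything to showing $|\partial^{\tilde K}(D_n)+U|/|D_n|\to0$. The decisive step is the set-theoretic containment
\[
\partial^{\tilde K}(D_n)+U\ \subseteq\ \partial^{K_2}(D_n),\qquad K_2:=\tilde K+U,
\]
where $K_2$ is compact and symmetric with $0\in K_2$. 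Granting this, the van Hove property applied to the compact set $K_2$ yields $|\partial^{K_2}(D_n)|/|D_n|\to0$, and combining the three displayed inequalities finishes the proof.

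The main obstacle is the containment above, which is where the symmetry $U=-U$ and $0\in U$ are used essentially. I would verify it by a short case analysis on the two parts of $\partial^{\tilde K}(D_n)$ in \eqref{thick boundary}: a point $z=p+u$ with $p\in\partial^{\tilde K}(D_n)$ and $u\in U$ lands in $(D_n+K_2)\setminus Int(D_n)$ whenever $z\notin Int(D_n)$ (using $0\in K_2$), and otherwise, when $z\in Int(D_n)$, one exhibits $z$ in $(Cl(\fG\setminus D_n)-K_2)\cap D_n$ by absorbing the shift $-u\in U\subseteq K_2$ (respectively $\tilde k-u\in\tilde K+U=K_2$) into the enlarged compact set. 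This absorption is exactly what fails for the bare set $\tilde K$ and is what necessitates passing to $K_2=\tilde K+U$; once it is in place the argument is routine.
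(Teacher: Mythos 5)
Your proof is correct and takes essentially the same route as the paper's: the same split of the difference into a sum over $s\in\Lambda\cap D_n$ (where the integral over $\fG\setminus D_n$ survives) and a sum over $s\in\Lambda\setminus D_n$, the same observation that every contributing site lies in a thick boundary $\partial^{\tilde K}(D_n)$ for a symmetric compact $\tilde K\supseteq\supp\varphi$, the same use of Lemma~\ref{lem:ud} to convert the count of such sites into a Haar-measure bound via disjoint translates of $U$, and the same appeal to the van Hove property for the enlarged compact set $\tilde K + U$. The only difference is cosmetic: you verify the containment $\partial^{\tilde K}(D_n)+U\subseteq\partial^{\tilde K+U}(D_n)$ explicitly by case analysis, a step the paper asserts without proof.
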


\begin{proof}The numerator is bounded from above by
\begin{align}
\sum_{s\in \Lambda\cap
  D_n} |w_s| \int_{\fG\setminus D_n} |\varphi(x - s)| d x \ \ 
 + \sum_{s\in \Lambda\setminus D_n} |w_s| \int_{D_n} |\varphi(x - s)| d x. \label{contr}
\end{align}
The summands
 in the former summation and the latter summation are  bounded,
because so are the sequence $\{w_s\}_{s\in\Lambda}$ and $\varphi\in C_c(\fG)$. 
So it is sufficient to show that the set of $s$ that ``contributes'' to
 $\eqref{contr}$ has density 0 with respect to $\{D_n\}_{n\in\Nset}$.

Let $s\in\Lambda$ ``contribute'' to \eqref{contr}. 
If $s\in \Lambda$ ``contributes'' to the former summation, then $s\in D_n$
 for some $x\in \fG\setminus D_n$ such that $x - s\in \supp \varphi$. So  
$ s\in
 \left[Cl \left(\fG \setminus D_n\right) - \supp \varphi
 \right] \cap D_n
\subset  \partial^{\supp \varphi} (D_n)$,
by the definition \eqref{thick boundary} of $\partial^{\supp \varphi} (D_n)$. Similarly, if $s\in \Lambda$ ``contributes'' to 
 the latter summation then $s\not\in D_n$ for some $x\in D_n$ such
 that $x - s\in\supp\varphi$, so 
$s \in  ( D_n - \supp \varphi) \setminus Int(D_n)\subset
 \partial^{- \supp \varphi} (D_n)$. Then $s\in
 \partial^{\supp\varphi}(D_n)\cup
 \partial^{-\supp\varphi}(D_n)\subset\partial^K(D_n)$ for some compact set $K\subset\fG$ such that $K=-K\supseteq
 \supp \varphi$.

Thus we have only to verify 
$\lim_{n\to\infty} {\#\{  s\in \Lambda\;;\; s\in \partial^K(D_n)\}}/{|D_n|}  =
 0$. 
This is proved as follows: By Lemma~\ref{lem:ud}, there is a compact set
 $U=-U$ such that  $(s+U)\cap (t+U)=\emptyset$
 for any distinct points $s,t\in\Lambda$. Thus
 $U+\{ s\in \Lambda\;;\;
 s\in \partial^K(D_n)\}\subset \partial^{K+U}(D_n)$, and the
 Haar measure of the left-hand side of the inclusion is 
$|U|\cdot \#\{s\in \Lambda\;;\; s\in \partial^K(D_n) \}$. Thus
$ |U| \cdot \lim_{n\to\infty} \#\{  s\in \Lambda\;;\; s\in \partial^K(D_n)\}/
 |D_n|\le\lim_{n\to\infty} \left|\partial^{K+U}(D_n)\right|/|D_n|=0
 $ by the definition of van Hove sequence.
\qed\end{proof}

\begin{lemma}\label{thm:a} Suppose the same assumptions as in
 Theorem~\ref{thm:2}. Then 
\begin{enumerate}
\item \label{thm:a:1}
For a complex Hilbert space
 $L^2(\dual \fT)$ with the inner product 
\begin{align*}
\inner{\kappa_1}{\kappa_2}_{\dual\fT}:=\sum_{\xi\in\dual \fT}
\kappa_1(\xi) \cconj{\kappa_2(\xi)}\qquad(\kappa_1,\kappa_2\in \dual\fT), 
\end{align*} 
we have a bijective isometry
$\displaystyle  L^2\left( X_\Lambda, \nu\right)\stackrel{\iota}{\to} L^2(\fT, \tau)\stackrel{\dual{(\bullet)}}{\to}
 L^2(\dual\fT)$.

\item  \label{thm:a:assert:2}
For $\varphi_1,\Phi_1$ of Lemma~\ref{lem:abu}, and the projection
      $\varpi$ given in \eqref{bijectionvarpi}, we have
\begin{align*}
\dual{(\iota\Phi_1)}(\xi)  = \frac {
 \widehat{\varphi_1}\left(-\varpi(\xi)\right)}{|\tilde{L}|}
{\hat{b}\left(\varpi(\xi)^\star\right)}\qquad(\xi\in\dual\fT).
\end{align*}
\end{enumerate}
\end{lemma}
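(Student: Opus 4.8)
The plan is to prove the two assertions of Lemma~\ref{thm:a} separately, with the first being essentially bookkeeping and the second being the computational heart.

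For assertion~\eqref{thm:a:1}, I would observe that $\iota$ is already established as a bijective isometry in Lemma~\ref{proposition:iota}, so it remains only to exhibit the second arrow $\dual{(\bullet)}:L^2(\fT,\tau)\to L^2(\dual\fT)$ as a bijective isometry. Since $\fT=(\fG\times\Gint)/\tilde L$ is a compact Abelian group with normalized Haar measure $\tau$, its dual $\dual\fT$ is discrete, and the classical Peter--Weyl / Pontryagin theory for compact groups gives that the characters $\{\xi\}_{\xi\in\dual\fT}$ form a complete orthonormal basis of $L^2(\fT,\tau)$. Hence the Fourier transform $\alpha\mapsto\dual\alpha$, sending $\alpha$ to its sequence of Fourier coefficients $\dual\alpha(\xi)=\int_\fT\alpha(t)\cconj{\xi(t)}d\tau(t)$, is exactly the coordinate map onto $\ell^2(\dual\fT)$, which is a bijective isometry by Parseval. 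Composing with $\iota$ gives the claim.

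For assertion~\eqref{thm:a:assert:2}, the goal is to compute the single Fourier coefficient $\dual{(\iota\Phi_1)}(\xi)=\int_\fT(\iota\Phi_1)(t)\cconj{\xi(t)}d\tau(t)$. First I would unwind $\iota\Phi_1=\Phi_1\circ(\beta')^{-1}$ using the description of $\Phi_1$ from the proof of Lemma~\ref{lem:abu}, namely that on a point $t\in\fT$ it equals $f_1(t)=\sum_{(x,y)\in t}g_1(x,y)$ where $g_1(x,y)=\varphi_1(-x)b(y)$. Thus the coefficient becomes $\int_\fT\bigl(\sum_{(x,y)\in t}\varphi_1(-x)b(y)\bigr)\cconj{\xi(t)}d\tau(t)$. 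The key step is to unfold this integral over the compact quotient $\fT$ into an integral over a fundamental domain, i.e.\ to recognize it (via the standard Weil/unfolding formula for the quotient $\fG\times\Gint\to\fT$ with $|\tilde L|$ the covolume) as $\frac{1}{|\tilde L|}\int_{\fG\times\Gint}\varphi_1(-x)b(y)\cconj{\xi((x,y)+\tilde L)}\,dx\,dy$, since the character $\xi$ on $\fT$ lifts to a character of $\fG\times\Gint$ that is trivial on $\tilde L$. That lifted character factors through the annihilator description of $\tilde\cL\simeq\dual\fT$ from \eqref{isomorphic}: under $\varpi$ of \eqref{bijectionvarpi} it splits as a character $\varpi(\xi)$ on $\fG$ times a character on $\Gint$, and the duality relation \eqref{juji}, $\chi(s)=\cconj{\chi^\star(s^\star)}$, identifies the $\Gint$-factor with $\varpi(\xi)^\star$. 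The integral therefore separates into a product of two one-variable Fourier transforms, $\int_\fG\varphi_1(-x)\cconj{\varpi(\xi)(x)}dx=\widehat{\varphi_1}(-\varpi(\xi))$ and $\int_\Gint b(y)\,\varpi(\xi)^\star(y)\,dy=\hat b(\varpi(\xi)^\star)$, yielding the stated formula after dividing by $|\tilde L|$.

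The main obstacle I expect is the unfolding step together with keeping the character bookkeeping correct: one must justify interchanging the sum over lattice points with the Haar integral on the quotient (absolute convergence is guaranteed since $g_1$ has compact support and $\tilde L$ is a lattice, so each fiber sum is finite and the integrand is $\tau$-integrable), and one must correctly track how the single character $\xi\in\dual\fT$ decomposes into its $\fG$- and $\Gint$-components under the annihilator isomorphism \eqref{isomorphic} and the projection $\varpi$. Getting the complex conjugations consistent between the definition of the Fourier transform $\hat f(\chi)=\int f\cconj{\chi}$ and the duality relation \eqref{juji} is the delicate point that determines whether $\varpi(\xi)^\star$ or its conjugate appears; the repetitive/no-translation-invariance hypotheses enter only indirectly, through Proposition~\ref{prop:beta} guaranteeing that $\beta'$ is a genuine bijection so that $\iota$ is well defined.
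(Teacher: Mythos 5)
Your proof of assertion (1) coincides with the paper's: both invoke Lemma~\ref{proposition:iota} and then Plancherel for the compact group $\fT$ (discreteness of $\dual\fT$ turning the dual inner product into a sum). For assertion (2) you take a genuinely different, and in fact shorter, route. The paper never unfolds over the quotient: it writes $\dual{(\iota\Phi_1)}(\xi)=\int_{X_\Lambda}\cconj{\xi\circ\beta(P)}\,\Phi_1(P)\,d\nu(P)$, converts this into a van Hove average $\lim_n|D_n|^{-1}\int_{D_n}\chi(x)\Phi_1(\Lambda-x)\,dx$ (with $\chi=\varpi(\xi)$) via unique ergodicity and the pointwise ergodic theorem, substitutes \eqref{conv}, interchanges sum and integral by the technical Lemma~\ref{aki2}, uses \eqref{juji} to write $\int_\fG\chi(x)\varphi_1(x-s)\,dx=\cconj{\chi^\star(s^\star)}\widehat{\varphi_1}(-\chi)$, and finally evaluates $\lim_n|D_n|^{-1}\sum_{s\in\Lambda(W)\cap D_n}b(s^\star)\cconj{\chi^\star(s^\star)}$ by Weyl's theorem, Proposition~\ref{prop:weyl}. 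You instead identify $\iota\Phi_1$ with the periodization $f_1(t)=\sum_{z\in t}g_1(z)$ --- legitimate, since $\Phi_1=f_1\circ\beta$ by the construction in Lemma~\ref{lem:abu} and $\beta\circ(\beta')^{-1}$ is the identity on the full-measure set $\beta(X'_{\Lambda})$ --- and then apply the Weil quotient-integration formula, after which the integral factors over $\fG\times\Gint$. Your route avoids the ergodic theorem, Lemma~\ref{aki2} and Proposition~\ref{prop:weyl} altogether, at the price of invoking Weil's formula and the measure compatibility $\beta_*\nu=\tau$ hidden in Lemma~\ref{proposition:iota}; the paper's route keeps the computation inside the van Hove-averaging framework used throughout the paper, exhibiting the Fourier coefficient as a physical-space amplitude limit. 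One bookkeeping correction to your write-up: the lift of $\xi$ to $\fG\times\Gint$ is $(x,y)\mapsto\varpi(\xi)(x)\,\varpi(\xi)^\star(y)$, so after unfolding, the $\Gint$-factor must also carry a conjugate, $\int_\Gint b(y)\,\cconj{\varpi(\xi)^\star(y)}\,dy$, which equals $\hat b\left(\varpi(\xi)^\star\right)$ under the paper's convention $\hat f(\chi)=\int f\,\cconj{\chi}$; written without the bar, as in your text, that integral would instead be $\hat b\left(-\varpi(\xi)^\star\right)$. Your final formula is nevertheless the correct one.
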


\begin{proof} \eqref{thm:a:1}  By \eqref{isomorphic}, $\dual\fT$ is discrete, so the inner product of the Hilbert space $L^2(\dual\fT)$
 is in fact a summation. 
Recall that Fourier transform is a bijective isometry. So the conclusion
 follows from Lemma~\ref{proposition:iota}. 

\eqref{thm:a:assert:2} 
%
Put $\chi:=\varpi(\xi)$. The left-hand side $\widehat{( \iota
 \Phi_1)}(\xi)$ of the equation is  a Fourier transform $\int_\fT \cconj{\xi(t)}
 (\iota\Phi_1)(t) d \tau(t)$, which is $\int_{\XP0} \cconj{\xi \circ \beta(P)}\ 
 \Phi_1(P) d \nu(P)$ by Lemma~\ref{proposition:iota}. 
By Proposition~\ref{prop:ds}~\eqref{assert:uniquelyergodic} and  the pointwise ergodic theorem~\cite{MR2186253},  the integral is
$\lim_{n\to\infty}\int_{D_n}
 \cconj{\xi\left(\beta(\Lambda - x)\right)} \Phi_1(\Lambda - x) d
 x\cdot |D_n|^{-1}$.
Since $\beta(\Lambda-x)=\tilde L - (x,0)$ by
 Proposition~\ref{prop:beta}, we have 
$\xi\left(\beta(\Lambda - x)\right) = \xi(\tilde L -(x,0)) =
 \varpi(\xi)(-x)=  \cconj{\chi}(x)$. So
 $\widehat{\left(\iota\Phi_1\right)}(\xi) = \lim_{n\to\infty}
 \int_{D_n} \chi(x) \Phi_1(\Lambda-x) d x \cdot |D_n|^{-1}$. By  \eqref{conv} of Lemma~\ref{lem:abu} and  the premise
 $\omega=\sum_{s\in\Lambda(W)} b(s^\star)\delta_s$, we have
\begin{align*}
\dual{(\iota\Phi_1)} (\xi) =
 \lim_{n\to\infty}\frac{1}{|D_n|}  \int_{D_n} \chi(x)
\sum_{s\in\Lambda(W)}\varphi_1(x-s) b(s^\star ) d  x. 
\end{align*} 
 We can apply Lemma~\ref{aki2} to above,
since  $\Lambda(W)$ is
 uniformly discrete, the sequence
 $\left\{b(s^\star)\right\}_{s\in \Lambda(W)}$ of weights is bounded, and $\varphi_1\in
 C_c(\fG)$.
Thus
\begin{align*}
\dual{(\iota\Phi_1)} (\xi) =
 \lim_{n\to\infty}\frac{1}{|D_n|} 
\sum_{s\in\Lambda(W)\cap D_n} b(s^\star) \int_\fG \chi(x) \varphi_1(x-s) d  x. 
\end{align*} 
Here $\int_\fG \chi(x)\varphi_1(x-s)d x=\chi(s)\widehat
 {\varphi_1}(-\chi)$ is
 $\cconj{\chi^\star(s^\star)}\widehat{\varphi_1}(-\chi)$ by
 \eqref{juji}. Therefore
\begin{align}
\dual{(\iota\Phi_1)} (\xi) =
 \widehat{\varphi_1}(-\chi) \lim_n\frac{1}{|D_n|} \sum_{s\in \Lambda(W)\cap
 D_n} b(s^\star) \cconj{\chi^\star(s^\star)}.\label{eq21}
\end{align}
 Since $b\in\cbp$, the Fourier transform  $\hat b$ of $b$ is
 well-defined. So Proposition~\ref{prop:weyl} implies that the
 limit in \eqref{eq21} is $\int_W b(y)\cconj{\chi^\star}(y)
 d y/|\tilde L|$, which is $\hat b( \chi^\star) / |\tilde L|$
 by $\supp b \subset W$. Therefore \eqref{eq21} implies the desired
 consequence. 
\qed\end{proof}

\begin{potthm} By Proposition~\ref{prop:al}, it
 is sufficient to verify
\begin{align}\int_{\fG}(\varphi * \widetilde\varphi )(x)
 d{\gamma_\omega}(x) = \int_{\dual\fG} \left|
 \dual{\varphi}(-\gamma)\right|^2 d\left( \sum_{\chi\in\cL} \frac{\left|
\hat{b}( \chi^\star) \right|^2}{|\tilde L|^2}\delta_\chi\right)(\gamma)\ge0.
\label{eq:b}
\end{align} Since $\widetilde{\gamma_\omega}=\gamma_\omega$, the leftmost integral is $((\varphi * \tilde{\varphi})*
\gamma_\omega)(0)$, which is
 $\inner{\Phi}{\Phi}_\nu$ by \eqref{eq10}. Because each $\Phi$
 corresponds uniquely to $\varphi$ by Lemma~\ref{lem:abu} and because of Lemma~\ref{thm:a}, it is
 $\sum_{\chi\in\cL} {\left|
\widehat{\varphi}(-\chi) {\widehat{b}(\chi^\star)}\right|^2}\cdot |\tilde
 L|^{-2}$,
 the right-hand side of \eqref{eq:b}.\end{potthm}

By the Theorem we have proved, we can see that
the pure-point diffraction is still
observed as long as the sample path of random
weights is continuous on the
internal space of the model set. The condition is comparable Baake-Moody's
sufficient condition for deterministic model sets to have pure-point
diffraction; their condition demands the continuity with respect to the
internal space.

\begin{theorem}
Suppose the same assumptions as in
Theorem~\ref{thm:2}. If a complex-valued stochastic process $\{
 B_y(\omega)\}_{y\in W}$ is such that the sample path
 $b_\omega(y)=1_W(y)B_y(\omega)$ is continuous on $W$ almost every $\omega$,
then
a Dirac comb $\pi(\omega)=\sum_{s\in \Lambda(W)}
 B_{s^\star}(\omega)\delta_s$ has a
 diffraction measure $\widehat{\gamma_\pi}(\omega)$ which expectation 
is  pure-point 
\begin{align*}\mathrm{E}_\omega[\widehat{\gamma_\pi}(\omega)]=\sum_{\chi\in\cL}
\frac{\E{|{\widehat {b_\omega}}(-\chi)|^2}}{|\tilde
 L|^2}\delta_\chi.
\end{align*}\end{theorem}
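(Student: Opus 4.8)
The plan is to prove the statement by invoking Theorem~\ref{thm:2} \emph{pathwise} and then passing to the expectation. First I would fix a full-measure event $\Omega_0$ on which the sample path $b_\omega = 1_W B_\bullet(\omega)$ is continuous on $W$. The hypothesis, together with the fact that $W=Cl(Int(W))$ is compact, guarantees that for each $\omega\in\Omega_0$ the weight $b_\omega$ is an admissible weight for Theorem~\ref{thm:2}, i.e.\ $b_\omega\in\cbp$ with $\supp b_\omega\subset W$. For every $s\in\Lambda(W)$ we have $s^\star\in W$, so $B_{s^\star}(\omega)=b_\omega(s^\star)$, and hence $\pi(\omega)=\sum_{s\in\Lambda(W)}b_\omega(s^\star)\delta_s$ is exactly a deterministic weighted Dirac comb of the form \eqref{thm:2:hosi}. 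All the remaining hypotheses of Theorem~\ref{thm:2} (repetitivity of $\Lambda(W)$, that $\Lambda(W)-\Lambda(W)$ generates $L$, that $W$ has no nontrivial translation invariance, and $\theta(\partial W)=0$) are deterministic and are assumed. Applying Theorem~\ref{thm:2} therefore yields, for each $\omega\in\Omega_0$,
\begin{align*}
\widehat{\gamma_\pi}(\omega)=\sum_{\chi\in\cL}\frac{|\widehat{b_\omega}(\chi^\star)|^2}{|\tilde L|^2}\delta_\chi,
\end{align*}
a positive pure-point measure supported on the discrete set $\cL$.

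Next I would take the expectation in $\omega$. Each atom mass $\omega\mapsto|\widehat{b_\omega}(\chi^\star)|^2/|\tilde L|^2$ is measurable: since $\widehat{b_\omega}(\chi^\star)=\int_W b_\omega(y)\cconj{\chi^\star(y)}\,d y$ and the process has almost surely continuous (hence jointly measurable) sample paths, this Fourier coefficient is a measurable function of $\omega$ by Fubini. Because $\cL$ is countable --- it projects bijectively from the discrete group $\dual\fT\simeq\tilde\cL$ via \eqref{isomorphic} and \eqref{bijectionvarpi} --- and every atom mass is nonnegative, Tonelli's theorem permits interchanging $\mathrm{E}_\omega$ with the sum over $\chi$, so that
\begin{align*}
\mathrm{E}_\omega[\widehat{\gamma_\pi}(\omega)]=\sum_{\chi\in\cL}\frac{\E{|\widehat{b_\omega}(\chi^\star)|^2}}{|\tilde L|^2}\delta_\chi,
\end{align*}
which is again pure-point and supported on $\cL$, giving the asserted formula (the argument of $\widehat{b_\omega}$ being the internal-space character produced by Theorem~\ref{thm:2}).

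The main obstacle will be the interchange of the expectation with the diffraction construction, but this is tamed precisely because Theorem~\ref{thm:2} already delivers the pathwise diffraction in closed form as a \emph{positive} series indexed by the fixed countable set $\cL$: positivity makes Tonelli's theorem directly applicable, so no delicate dominated-convergence estimate on the vague limit defining $\gamma_\pi$ is needed. The only other point requiring care is verifying that the hypotheses of Theorem~\ref{thm:2} genuinely hold for almost every sample path; since the structural conditions on $\Lambda(W)$ and $W$ are deterministic and assumed, this reduces to the a.s.\ continuity of $b_\omega$ on the compact window $W$, which is exactly the assumption placed on $\{B_y(\omega)\}_{y\in W}$. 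If in addition one wants each atom of $\mathrm{E}_\omega[\widehat{\gamma_\pi}(\omega)]$ to be finite, a square-integrability assumption on the process (ensuring $\E{|\widehat{b_\omega}(\chi^\star)|^2}<\infty$) suffices; otherwise the formula is read with possibly infinite masses and the pure-point character of the expected diffraction is unaffected.
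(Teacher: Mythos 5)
Your proposal is correct and matches the paper's intended argument exactly: the paper gives no separate proof of this theorem beyond the preceding remark ``By the Theorem we have proved, we can see that\ldots'', i.e.\ it is meant to follow by applying Theorem~\ref{thm:2} pathwise to the almost-surely continuous weight $b_\omega$ (with $\supp b_\omega\subset W$, so $\pi(\omega)$ is a comb of the form \eqref{thm:2:hosi}) and then averaging the resulting pure-point series over $\omega$, which is precisely what you do. Your additional details --- measurability of the atom masses $\omega\mapsto|\widehat{b_\omega}(\chi^\star)|^2$ and Tonelli's theorem to interchange $\mathrm{E}_\omega$ with the nonnegative sum over the countable set $\cL$ --- fill in exactly the steps the paper leaves implicit.
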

  For example, if $W=[0,1]$, $\{B_y(\omega)\}_{y\in W}$ is
an Ornstein-Uhlenbeck process.

In \cite{MR2387563}, for particle gases over \textsc{flc} sets with Gibbs random
field under a suitable interaction potential restrictions, Baake-Zint
proved that the diffraction measures do not have singular continuous
components and explicitly described the pure-point component and the
absolutely continuous component by using the covariance of the random
field.

Weak dependence of a random field over a point set $S$ is studied typically with
\emph{Dobrushin interdependence matrix}~\cite[p.~32]{MR1997114}
$(C_{x,y})_{x,y\in S}$. For
example, K\"ulske~\cite{MR1997114} derived that if a Gibbs field has a
Dobrushin interdependence matrix with
the largest row-sum and the largest
column-sum both less than 1, then it satisfies a concentration inequality for
functions. We see that every finitely randomized model set has a Dobrushin interdependence
matrix $(C_{x,y})_{x,y\in S}$ with each row and each column having
bounded number of nonzero entries, 
because a mass at a point $x$ is
independent from that of any point $y$ whenever $x-y$ is not in the
dependency set $D$.
So, it is natural to generalize finitely randomized model sets $S$ to 
 randomized model sets $S$ having
a finite set $D$ with
$\sup_{x\in S} \sum_{y\in S\setminus (D+x)} C_{x,y} <1$, for example. We hope such
randomized model sets with weak dependence satisfy
our
results and a concentration inequality for functions.

\begin{acknowledgements}
The first author thanks Prof. Michael Baake and anonymous referee.
\end{acknowledgements}


\begin{thebibliography}{10}
\providecommand{\url}[1]{{#1}}
\providecommand{\urlprefix}{URL }
\expandafter\ifx\csname urlstyle\endcsname\relax
  \providecommand{\doi}[1]{DOI~\discretionary{}{}{}#1}\else
  \providecommand{\doi}{DOI~\discretionary{}{}{}\begingroup
  \urlstyle{rm}\Url}\fi

\bibitem{MR0621876}
Argabright, L., Gil~de Lamadrid, J.: Fourier analysis of unbounded measures on
  locally compact abelian groups.
\newblock American Mathematical Society, Providence, R.I. (1974).
\newblock Memoirs of the American Mathematical Society, No. 145

\bibitem{baake10:_diffr_of_stoch_point_sets}
Baake, M., Birkner, M., Moody, R.V.: Diffraction of stochastic point sets:
  Explicitly computable examples.
\newblock Communications in Mathematical Physics \textbf{293}, 611--660 (2010)

\bibitem{0305-4470-30-9-016}
Baake, M., Hermisson, J., Pleasants, P.A.B.: The torus parametrization of
  quasiperiodic {LI}-classes.
\newblock Journal of Physics A: Mathematical and General \textbf{30}(9),
  3029--3056 (1997).


\bibitem{MR2308136}
Baake, M., Lenz, D., Moody, R.V.: Characterization of model sets by dynamical
  systems.
\newblock Ergodic Theory Dynam. Systems \textbf{27}(2), 341--382 (2007).


\bibitem{baake98:_diffr_point_sets_with_entrop}
Baake, M., Moody, R.V.: Diffractive point sets with entropy.
\newblock J. Phys. A: Math. Gen \textbf{31}, 9023--9039 (1998)

\bibitem{MR1798987}
Baake, M., Moody, R.V.: Self-similar measures for quasicrystals.
\newblock In: Directions in mathematical quasicrystals, \emph{CRM Monogr.
  Ser.}, vol.~13, pp. 1--42. Amer. Math. Soc., Providence, RI (2000)

\bibitem{MR2084582}
Baake, M., Moody, R.V.: Weighted {D}irac combs with pure point diffraction.
\newblock J. Reine Angew. Math. \textbf{573}, 61--94 (2004)

\bibitem{MR1633181}
Baake, M., Moody, R.V., Schlottmann, M.: Limit-(quasi)periodic point sets as
  quasicrystals with {$p$}-adic internal spaces.
\newblock J. Phys. A \textbf{31}(27), 5755--5765 (1998).


\bibitem{MR2387563}
Baake, M., Zint, N.: Absence of singular continuous diffraction for discrete
  multi-component particle models.
\newblock J. Stat. Phys. \textbf{130}(4), 727--740 (2008).


\bibitem{barabash09:_diffus_scatt_and_fundam_proper_of_mater}
Barabash, R., Ice, G., Turchi, P. (eds.): Diffuse Scattering and the
  Fundamental Properties of Materials.
\newblock Momentum press, New York (2009)

\bibitem{MR551496}
Hewitt, E., Ross, K.A.: Abstract harmonic analysis. {V}ol. {I},
  \emph{Grundlehren der Mathematischen Wissenschaften [Fundamental Principles
  of Mathematical Sciences]}, vol. 115, second edn.
\newblock Springer-Verlag, Berlin (1979).
\newblock Structure of topological groups, integration theory, group
  representations

\bibitem{MR1325836}
Hof, A.: Diffraction by aperiodic structures at high temperatures.
\newblock J. Phys. A \textbf{28}(1), 57--62 (1995).


\bibitem{MR1328260}
Hof, A.: On diffraction by aperiodic structures.
\newblock Comm. Math. Phys. \textbf{169}(1), 25--43 (1995).
\newblock


\bibitem{janot94:_quasic}
Janot, C.: Quasicrystals: A primer, \emph{Monographs on the physics and
  chemistry of materials}, vol.~50, 2nd edn.
\newblock Oxford University Press, Oxford (1994)

\bibitem{MR1997114}
K{\"u}lske, C.: Concentration inequalities for functions of {G}ibbs fields with
  application to diffraction and random {G}ibbs measures.
\newblock Comm. Math. Phys. \textbf{239}(1-2), 29--51 (2003).


\bibitem{MR2480747}
Lenz, D.: Continuity of eigenfunctions of uniquely ergodic dynamical systems
  and intensity of {B}ragg peaks.
\newblock Comm. Math. Phys. \textbf{287}(1), 225--258 (2009).


\bibitem{MR2289878}
Lenz, D., Richard, C.: Pure point diffraction and cut and project schemes for
  measures: the smooth case.
\newblock Math. Z. \textbf{256}(2), 347--378 (2007).


\bibitem{M:meyer}
Meyer, Y.: Algebraic numbers and harmonic analysis.
\newblock North-Holland Publishing Co., Amsterdam (1972).
\newblock North-Holland Mathematical Library, Vol. 2

\bibitem{MR1460032}
Moody, R.V.: Meyer sets and their duals.
\newblock In: The mathematics of long-range aperiodic order (Waterloo, ON,
  1995), \emph{NATO Adv. Sci. Inst. Ser. C Math. Phys. Sci.}, vol. 489, pp.
  403--441. Kluwer Acad. Publ., Dordrecht (1997)

\bibitem{MR1884143}
Moody, R.V.: Uniform distribution in model sets.
\newblock Canad. Math. Bull. \textbf{45}(1), 123--130 (2002)

\bibitem{MR:2011}
M\"uller, P., Richard, C.: {Ergodic properties of randomly coloured point
  sets}.
\newblock Canadian Journal of Mathematics  (2012).
\newblock \doi{10.4153/CJM-2012-009-7}.
\newblock OnLine First

\bibitem{MR2186253}
Nevo, A.: Pointwise ergodic theorems for actions of groups.
\newblock In: Handbook of dynamical systems. {V}ol. 1{B}, pp. 871--982.
  Elsevier B. V., Amsterdam (2006)

\bibitem{MR1798991}
Schlottmann, M.: Generalized model sets and dynamical systems.
\newblock In: Directions in mathematical quasicrystals, \emph{CRM Monogr.
  Ser.}, vol.~13, pp. 143--159. Amer. Math. Soc., Providence, RI (2000)

\bibitem{MR1267569}
Stroock, D.W.: Probability theory, an analytic view.
\newblock Cambridge University Press, Cambridge (1993)

\end{thebibliography}

\appendix
\section{Proof of ``If $Int(W)\ne\emptyset$ then $\Lambda(W)$ is relatively dense.''}

Variants of this assertion have already appeared in Schlottmann, Lagarias and so on.

Since $\tilde L$ is a lattice of $\fG\times\Gint$, there is a
 complete system $C$ of representatives of a compact, quotient set $(\fG\times\Gint)/\tilde
 L$ such that $C$ is relatively compact. Then $\fG\times \Gint =
 \tilde L + C$. Since the window $W$ has nonempty interior and $L^\star$
 is dense in $\Gint$, it follows that
 $\Gint=L^\star-W$. Moreover, since $\Pi_\rint(Cl(C))$ is compact, there
 is a finite subset $F$ of $L$ such that $\Pi_\rint(Cl(C))\subset
 F^\star-W$. Let $K:=\Pi(Cl(C))-F$, which is compact in $\fG$. For each
 $x\in \fG$, there are $(t,t^\star)\in\tilde L$ and $(c,d)\in C$ such
 that $(x,0)=(t,t^\star)+(c,d)$. Since $\Pi_\rint(C)\subset F^\star-W$,
 there are $f\in F$ and $w\in W$ such that $d=f^\star-w$. Then
\begin{align*}
 (x,0)=(t,t^\star)+(c,f^\star -w) = (t+c,\, t^\star +f^\star -w).
\end{align*}
So $t^\star+f^\star=w\in W$, and thus $t+f\in \Lambda(W)$. Since
 $x=t+c=(t+f)+(c-f)$ and $c-f\in \Pi(C)-F\subset K$, we have $x\in
 \Lambda(W)+K$. Therefore $\fG=\Lambda(W)+K$.
\end{document}